\newtheorem{thm}{Theorem}
\newtheorem{lemma}{Lemma}
\newtheorem{propo}{Proposition}
\newtheorem{coro}{Corollary}
\theoremstyle{definition}
\newtheorem*{defn}{Definition}
\def\ker{{\rm ker}} \def\ran{{\rm ran}}   
\def\cat{{\rm cat}} 
\def\tcat{{\rm tcat}} 
\def\gcat{{\rm gcat}} \def\Cat{{\rm Cat}}
\def\ccup{{\rm cup}}
\def\cri{{\rm cri}} 
\def\crit{{\rm crit}} 
\def\gcrit{{\rm gcrit}} 
\def\G{{\mathcal G}}  
\def\R{\mathbf R}
\title{The Lusternik-Schnirelmann theorem for graphs}
\author{Frank Josellis and Oliver Knill}
\address{frank.josellis@senax.net, knill@math.harvard.edu }
\date{November 13, 2012}
\subjclass{55M30,58E05,05C75,05C10,57M15,57Q10}
\keywords{Graph theory, Lusternik-Schnirelmann category, Homotopy }
\begin{document}
\maketitle

\begin{abstract}
We prove the discrete Lusternik-Schnirelmann theorem $\tcat(G) \leq \crit(G)$ 
for general simple graphs $G=(V,E)$. It relates
$\tcat(G)$, the minimal number of in $G$ contractible graphs covering $G$, with 
$\crit(G)$, the minimal number of critical points which an injective function $f:V \to R$ 
can have. Also the cup length estimate $\ccup(G) \leq \tcat(G)$ is
valid for any finite simple graph. Let $\cat(G)$ be the minimal $\tcat(H)$ among all $H$
homotopic to $G$ and let $\cri(G)$ be the minimal $\crit(H)$ among all graphs $H$ homotopic to $G$,
then $\ccup(G) \leq \cat(G) \leq \cri(G)$ relates three homotopy invariants for graphs:
the algebraic $\ccup(G)$, the topological $\cat(G)$ and the analytic $\cri(G)$. 
\end{abstract}

\section{Introduction}

Developed at the same time than Morse theory \cite{Morse29,Mil63}, 
Lusternik-Schnirelmann theory \cite{LS,CLOT} complements Morse theory. It
is used in rather general topological setups and works also in infinite dimensional situations 
which are used in the calculus of variations. While Morse theory is stronger when applicable, 
Lusternik-Schnirelmann theory is more flexible and applies in more general situations.
We see here that it comes naturally in graph theory. 
In order to adapt the Lusternik-Schnirelmann theorem to graph theory,
notions of ``contractibility'', ``homotopy'',``cup length'' and ``critical points''
must be carried over from the continuum to the discrete. The first two concepts have been
defined by Ivashchenko \cite{I93,I94}. It is related to notions put forward earlier
by Alexander \cite{Alexander} and Whitehead \cite{Whitehead} (see \cite{Cohen}). 
who defined ``simple homotopy'' using ``elementary contractions'' and ``elementary expansions'' 
or ``subdivisions''.
We use a simplified but equivalent definition used in \cite{CYY}, who noted that edge removal and addition 
can be realized with pyramid vertex additions and removals. This is an essential simplification because
this allows to describe homotopy contractions
using injective functions $f$ as $\{ f \leq c_k \; \} \to \{ f \leq c_{k-1} \; \}$. 
Allowing contractions and expansions together produces homotopy.
Functions enter the picture in the same way as in the continuum. 
A smooth function $f$ on a manifold $M$ defines
a filtration $M(c) = \{ f \leq c \; \}$ which for general $c$ is a manifold with boundary. This one parameter family 
of manifolds crosses homotopy thresholds at critical points of $c$. 
This idea can be pushed over to the discrete:
any injective function $f$ on the vertex set $V$ of a graph $G=(V,E)$ orders the vertex set
$f(x_1)<f(x_2)< \cdots < f(x_n)$ and
defines so a sequence of graphs $G_f(x_j)$ generated by 
the vertices $\{ v \in V \; | \; f(v) < f(x) \; \}$. 
Analogously to the continuum, this graph filtration
$G_f(x_j)$ gives structure to the graph and builds up any simple graph starting 
from a single point. While $G(x_j)$ and $G(x_{j+1})$ are often 
Ivashchenko homotopic, there are steps, 
where the homotopy changes. Quite drastic topological changes can happen at vertices $x$, 
when the Euler characteristic of $G(x)$ changes. This 
corresponds to the addition of a critical point with nonzero index. One can
however also encounter critical points of zero index which do not necessarily 
lead to an Euler characteristic change. Category can track this.
As in the continuum, homotopy for graphs is an
equivalence relation for graphs. It can be seen either directly or as a consequence of 
the Euler-Poincar\'e formula, that Euler characteristic is a homotopy invariant.
One has also in the discrete to distinguish
``contractibility'' with ``homotopic to a point'' which can be different. 
It can be necessary to expand a space first before being able to collapse it to a point.
A first example illustrating this in the discrete was given in \cite{CYY}.
We take a definition of contractibility, which is equivalent to the one given 
by Ivashchenko for graphs and which is close to what one can look at 
in the continuum or simplicial complexes. 
\footnote{While pioneers like Whitehead would have considered a graph as a 
one-dimensional simplicial complex which is only contractible, 
if it is a tree, Ivashchenko's definition is made for graphs with no 
reference to simplicial complexes or topological spaces.} 
A topological space $M$ can be called {\bf contractible in itself}, 
if there is a continuous function $f$ on $M$ such that 
$M(x) = \{ y \; | \; f(y) < f(x) \; \}$ are all homotopic or empty. 
This implies that $M$ is homotopic to a point and shares with the later
quantitative topological properties like category or Euler characteristic. Of course, this notion does not 
directly apply in the discrete because for discrete topological spaces, the addition of a 
discrete point changes the homotopy notion when taken verbatim from the continuum. 
The homotopy definition can be adapted however in a meaningful way to graphs, as
Ivashchenko \cite{I93,I94} has shown. As in the continuum, it 
will be important however to distinguish between ``contractibility in itself'' and 
``contractibility within a larger graph $G$''. \\

Also the notion of critical points for a continuous function can be deduced from
the continuum. For a function which lacks differentiability like a function on a
metric space, we can still define critical points. To do so, define a point to be a {\bf regular point} if 
or sufficiently small $r>0$, the sets $S_r^-(x) = \{ d(y,x) = r, f(y) < f(x) \; \}$ are contractible 
in themselves.  All classically regular points of a differentiable function on a manifold 
are regular points in this more general sense.  
The notion of contractibility allows therefore to define critical points also in nonsmooth situations. 
The definition of contractibility for graphs is inductive and as in the continuum, we either can look at
contractibility within itself or contractibility of a subgraph within a larger graph. It will 
lead to the equivalence relation of homotopy for graphs.
A simple closed curve on a simply connected space $M$ for example is contractible in $M$ 
but not contractible in itself nor homotopic to a point. Indeed, 
the three notions ``contractible in itself'', ``contractible within a larger graph $G$'' and
``homotopic to a point'' are all different from each other. Seeing the difference is essential 
also in the continuum. All three notions are important: ``contractible in itself'' is used to
define critical points and simple homotopy steps'', ``contractible within a larger graph $G$''
is used in the definition of category. ``Homotopy'' finally is the frame work which produces natural 
equivalence classes of graphs. \\

The inductive definition goes as follows: a simple graph $G=(V,E)$ is 
{\bf contractible in itself} if there is an injective function $f$ on $V$ 
such that all sub graphs 
$S^-(x)$ generated by $\{ y \in S(x) \; | \; f(y) < f(x) \; \}$ are contractible. 
Only at the global minimum $x$, the set $S^-(x)$ is empty. 
The {\bf geometric category} $\gcat(G)$ of a graph $G$ is the smallest number of in itself 
contractible graphs $G_j=(V_j,E_j)$ 
which cover $G=(V,E)$ in the sense $\bigcup_j V_j = V$ and $\bigcup E_j = E$. 
As in the continuum, the geometric category is not a homotopy invariant. \\
Contractible sets have Euler characteristic $1$ and can be built up 
from a single point by a sequence $G(x_1)=\emptyset,G(x_1)=\{x_1\}, \dots ,G(x_n)=G$ 
of graphs which all have the same homotopy properties since $G(x_{k+1})$ 
is obtained from $G(x_k)$ by a pyramid construction on a contractible subgraph. 
Examples of properties which are preserved are Euler characteristic, cohomology, cup length or category. 
These are homotopy invariants as in the continuum. Two graphs $G=G_0,H=G_n$ are homotopic if one can find
a sequence of other graphs $G_i$ such that $G_{i+1}$ contracts to $G_i$ or $G_i$ contracts to $G_{i+1}$. 
As in the continuum,  topological properties like dimension or geometric category are not preserved 
under homotopy transformations. A graph which is in itself contractible 
is homotopic to a single point but there are graphs homotopic to 
a single point which need first to be homotopically enlarged before one can reduce them to a
point. These graphs are not contractible. We should mention that terminology 
in the continuum often uses ``contractibility'' is a synonym for ``homotopic to a point''
and ``collapsibility'' for homotopies which only use reductions. We do not think there is a danger
of confusions here.  A subgraph $H$ of $G$ is 
{\bf contractible in $G$} if it is contractible in itself 
or if there is a contraction of $G$ such that $H$ becomes a single point. 
Any subgraph $H$ of a contractible graph $G$ is contractible. In the continuum, a closed
curve in a three sphere is contractible because the sphere is simply
connected, but the closed loop is not contractible in itself. This is the same in the discrete: 
a closed loop $C_5$ is a subgraph of $K_5$ which is
contractible within $K_5$ because $K_5$ can be collapsed to a point, but $C_5$ in itself is
not contractible. 
The smallest number of {\bf in $G$ contractible} subgraphs of $G$ 
whose union covers $G$ is called the {\bf topological category} of $G$ and denoted $\tcat(G)$. The minimal
$\tcat(H)$ among all $H$ homotopic to $G$ is called {\bf category} and is a homotopy invariant.
We will show that the topological Lusternik-Schnirelmann category $\tcat(G)$ is
bounded above by the minimal number $\crit(G)$ of critical points by verifying
that for the gradient sequence $G_f(x_j)$, topological changes occur at critical points $x_j$.
Indeed, we will see that if $x$ is a regular point then the category does not change and
consequently that if the category changes (obviously maximally by one), then we have a critical point. 
From $\tcat(G) \leq \crit(G)$ and $\cat(G) \leq \tcat(G)$ we get $\cat(G) \leq \crit(G)$ and 
since the left hand side is a category invariant, we have $\cat(G) \leq \cri(G)$. \\

As mentioned earlier, category theory has Morse theory as a brother. Morse theory makes assumptions
on functions. One of them implies that critical points have index $-1$ or $1$. The index of critical
points has been defined in \cite{poincarehopf}.
An Euler characteristic change implies that the critical point has nonzero index 
$i_f(x) = 1-\chi(S^-_f(x))$, where $S^-_f(x)$ is the graph generated by
$\{ y \in V \; | \; (x,y) \in E, f(y)<f(x) \; \}$. 
The Lusternik-Schnirelmann theorem $\cat(G) \leq \crit(G)$ 
bounds the category above by the minimal number $\crit(G)$ of 
critical points among all injective functions $f$. It is in the following way related to the 
Poincar\'e-Hopf theorem for the graph $G_f(a) = (V(a),E(a))$
$$   \chi(G_f(a)) = \sum_{x \in V(a)} i_f(x) \; . $$
In both cases,  Lusternik-Schnirelmann as well as Poincar\'e-Hopf, the left hand side is a homotopy
invariant while the right hand side uses notions which are not but which combine to a
homotopy invariant. 
The Lusterik-Schnirelmann theorem throws a loser but wider net than Euler-Poincar\'e
because unlike Euler characteristic, category can see also ``degenerate'' critical points of zero index. 
The analogy can be pushed a bit more: the category difference $k_f(x_j)=\tcat(G(x_j))-\tcat(G(x_{j-1}))$ 
defines a {\bf category index} of a point and almost by definition, 
the Poincar\'e-Hopf type category formula 
$$   \tcat(G) = \sum_{x \in V} k_f(x) \;  $$ 
holds. Since the expectation value of the index $K(x)={\rm E}[i_f(x)]$ is curvature \cite{indexexpectation}
we can define a category curvature $\kappa(x) = {\rm E}[k_f(x)]$ which is now independent of functions
and have a Gauss-Bonnet type theorem 
$$   \tcat(G) = \sum_{x \in V} \kappa(x) \;  $$
Similarly, $\cat(G) = \sum_{x \in V} C(x)$ for some function $C(x)$ on the vertices. \\

The analogue, that $\chi(G)$ is bounded above by the number of critical points with nonzero index
is the Lusternik-Schnirelmann theorem $\cat(G) \leq \crit(G)$.  
Viewed in this way, the Lusternik-Schnirelmann theorem is also a sibling of a weak Morse inequality: 
if $f$ is called a {\bf nondegenerate function} on a graph if $i_f(x)=1$ or 
$i_f(x)=-1$ for all critical points $x \in V$ and if $\crit(G)$ is the minimal number 
of critical points a Morse function on $G$ can have, 
then $\chi(G) \leq \crit(G)$.  \\

In order to prove strong Morse inequalities, we need to define a Morse index at critical points. 
This is possible as mentioned in section 5 but requires to look at an even more narrow 
class of functions for which the Betti vector 
at a critical point should only change at one entry 
$b_m$ paraphrasing the addition or removal of a $m$-dimensional ``handle". While Morse theory needs
assumptions on functions, the Lusternik-Schnirelmann theorem works
for general finite simple graphs.
Discrete Morse theory has been pioneered in a different way by Forman 
since 1995 (see e.g. \cite{Forman1999,Forman2002}) and proven extremely useful.
Forman's approach also builds on Whitehead and who looks at classes 
of functions on simplicial complexes for which strong Morse 
inequalities (and much more) was proven. This theory is much more developed than the pure
graph theoretical approach persued here.  \\

To get the lower bound $\ccup(G) \leq \tcat(G)$ for category implying $\ccup(G) \leq \cat(G)$,  
we need a cohomology ring, that is an {\bf exterior multiplication} on {\bf discrete differential forms} 
and an {\bf exterior derivative}. To get a Grassmannian algebra, to define the cup length, a homotopy invariant. 
Let $\G_k$ denote the set of $K_{k+1}$ subgraphs of the finite simple graph $G=(V,E)$ so that $\G_1=V,\G_2=E$. 
If $v_k$ is the cardinality of $\G_k$, then $\Omega_k$, the set of all anti-symmetric functions in $k+1$ variables
$f(x_0,x_1, \dots, x_k)=f(x_0,\vec{x})$ forms a vector space of dimension $v_k$.
The linear space $\Omega = \oplus \Omega_k$ of discrete differential forms has dimension $v=\sum_k v_k$ and the super trace
of the identity map is the Euler characteristic of $G$. Given a $p$ form $f$ and a $q$ form, 
we can define the tensor product $f \otimes g(x_0,\vec{x},\vec{y}) = f(x_0,\vec{x}) g(x_0,\vec{y})$ of $f$ and $g$ 
centered at $x_0$. Define now
$f \wedge' g(x_0,\vec{x},\vec{y}) = \sum_{(\vec{u},\vec{v})=\sigma(\vec{x},\vec{y})} (-1)^{\sigma} f(x_0,\vec{u}) g(x_0,\vec{v})$,
where $\sigma$ runs over all $p,q$ shuffles, permutations of $\{1,\dots,p+q \; \}$ which preserve the order on the first $p$
as well as the last $q$ elements. Finally define $f \wedge g$ as ${\rm Alt}(f \wedge' g)$, the antisymmetrization
over all $p+q+1$ elements. Except for the last step which is necessary because different ``tangent spaces" come together at
a simplex, the definitions are very close to the continuum and work for general finite simple
graphs even so the neighborhoods of different vertices can look very different. The exterior algebra $(\Omega,\wedge)$ is an associative
graded algebra which satisfies the super-anti-commutativity relation $f \wedge g =(-1)^{|f| |g|} g \wedge f$ if
$p=|f|$ denotes the order of the form $f$. The exterior derivative $d:\Omega_p \to \Omega_{p+1}$ defined by 
$df(x_0,x_1,\dots,x_p) = \sum_{k=0}^p (-1)^k f(x_0,x_1,\dots, \hat{x}_k,\dots,x_p)$ leads to $H^p(G)={\rm ker}(d_p)/{\rm im}(d_{p-1})$
in  the same as in topology. The dimension $\beta_p$ of the vector space $H^p(G)$ is a Betti number and linear algebra
assures the Euler-Poincar\'e formula $\chi(G)=\sum_{p} (-1)^p \beta_p = \sum_p (-1)^p v_p$. In summary, for any finite simple 
graph $G$, we have a natural associative, graded super differential algebra $(\Omega,\wedge,d)$ which satisfies the Leibniz rule
$d(f \wedge g) = df \wedge g + (-1)^{|f|} f \wedge dg$ and consequently induces a cup product and so a cohomology ring 
$(H^*(G),\wedge)$. \\

The estimates $\ccup(G) \leq \tcat(G) \leq \crit(G)$ link as in the continuum 
an algebraically defined number $\ccup(G)$ with the topologically defined number $\tcat(G)$ and 
the analytically defined number $\crit(G)$. We can make $\tcat$ and $\crit$ homotopy invariant similarly
as $\gcat$ was made homotopy invariant. We have then $\ccup(G) \leq \cat(G) \leq \cri(G)$, relating three
homotopy invariants. This works unconditionally for any finite simple network $G$. \\

Updates since Nov 4: \\
\begin{itemize}
\item Nov. 6, 2012: Scott Scoville informed us on \cite{AaronsonScoville} with Seth Aaronson,
where a discrete LS category to the number of critical points in the sense of Forman. There seems no
overlap with our results.
\item Nov. 13, 2012: 
\begin{itemize}
\item We renamed the original category $\tcat$ and define $\cat(G) = \min_{H} \tcat(G)$, where $H$ runs over all 
graphs homotopic to $G$. Now only, $\cat$ is a homotopy invariant. The topological category $\tcat$, like $\gcat$
fails to be an invariant. Theorem 1 with $\tcat(G) \leq \crit(G)$ remains the same.
\item We introduce the homotopy invariant 
$\cri(G)=\min_{H} \crit(G)$, where $H$ runs over graphs homotopic to $G$.  
The corollary $\ccup(G) \leq \cat(G) \leq \cri(G)$ relates now three homotopy invariants. 
(3 letter words like $\cat$ now indicate homotopy invariants while 
4 letter words like $\gcat$ indicate quantities which are not homotopy invariant.)
For the dunce hat, we have $\tcat(G)=2,\crit(G)=3$ but $\cat(G)=1,\cri(G)=1$.
\item We add reference \cite{Grady} which deals with discrete forms but does not introduce an exterior
algebra formalism.
\item The new reference \cite{BrachoMonteiano} brings in the "skorpion" 
an example with $\ccup(G)=2, \cat(G)=2$ and $\tcat(G)=3$. 
\item The right part of Figure 6 is now completely triangulated and matches now the graph to the left. 
\item Lemma 3 is now before theorem 1. This helps to make the proof more clear.
\end{itemize}
% The original LS theorem works for paracompact Banach manifolds. Homotopy invariance between such spaces holds only
% when remaining in the class of manifolds.  See theorem 1.30 in CLOT. Also in the continuum, the dunce hat has
% category 2 while contractible showing the limitations of this theorem. 
\end{itemize}

\section{Critical points and Euler characteristic}

In this section, we see a new proof of the Poincar\'e-Hopf theorem \cite{poincarehopf} telling that the 
sum of the indices over all critical points is the Euler characteristic.  \\

Assume $G=(V,E)$ is a finite simple graph, where $V$ denotes the vertex set and $E$ the edge set. 
A subset $W$ of $V$ {\bf generates} a subgraph $(W,F=\{ (a,b) \; | \; a \in W, b \in W \; \})$ of $G$. 
For a vertex $x \in V$, denote by $S(x)$ the {\bf unit sphere}, the subgraph of 
$G$ generated by the vertices connected to $x$.  For an injective function $f:V \to \R$, 
denote by $S^-_f(x)$ the subgraph of $G$ generated by the vertices 
$\{ y \in S(x) \; | \; f(y) < f(x) \; \}$.

\begin{defn}
If $v_k$ is the number of complete $K_{k+1}$ subgraphs of $G$, 
then the Euler characteristic $\chi(G)$ 
of $G$ is defined as the finite sum $\chi(G) = \sum_{k=0}^{\infty} (-1)^k v_k$.
\end{defn}

\begin{defn}
A point $v \in V$ is a {\bf critical point with nonzero index} of $f$ if the index 
$i_f^-(x) = 1-\chi(S^-_f(x))$ is nonzero.
\end{defn}

{\bf Examples.} \\
{\bf 1)} Any minimum of $f$ is a critical point of $f$ with index $1$. \\
{\bf 2)} A maximum is a nonzero index critical point if $S(x)$ 
has Euler characteristic different from $1$.
This happens for example on cyclic graphs $C_n$, where maxima have index $-1$. \\
{\bf 3)} For an icosahedron, a sphere like graph, every maximum and minimum is a critical point of index $1$ 
because $S^-(x)=\emptyset$ at a minimum and $S^-(x)$ is a cyclic graph of Euler
characteristic $0$ if $x$ is a maximum for $f$.  \\

\begin{lemma}
Given two subgraphs $G_1=(V_1,E_1),G_2=(V_2,E_2)$ of $G$ whose union is 
$G=(V=V_1 \cup V_2,E=E_2 \cup E_2)$ 
and which have have intersection $H=(V_1 \cap V_2,E_1 \cap E_2)$. Then
$$ \chi(G) = \chi(A) + \chi(B) - \chi(H) \; . $$
\end{lemma}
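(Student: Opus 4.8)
The plan is to reduce the statement to a purely combinatorial inclusion-exclusion at the level of complete subgraphs, carried out one dimension at a time. Writing $v_k(\cdot)$ for the number of $K_{k+1}$ subgraphs and recalling that $\chi=\sum_{k\ge 0}(-1)^k v_k$ is a finite alternating sum (the graphs being finite), it suffices to establish, for every $k\ge 0$, the identity $v_k(G)=v_k(G_1)+v_k(G_2)-v_k(H)$; multiplying by $(-1)^k$ and summing then yields the claim (here $A,B$ in the displayed formula are $G_1,G_2$). Throughout I read the hypothesis in the natural way, in which $G_1,G_2$ are the subgraphs generated by $V_1,V_2$, so that $H$ is generated by $V_1\cap V_2$; this induced-subgraph reading is exactly what makes the counting clean, and I will return to why it cannot be dropped.

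Next I would pass from counts to the underlying finite sets. Let $\G_k(X)$ denote the set of $K_{k+1}$ subgraphs of a graph $X$, so $v_k(X)=|\G_k(X)|$. Since $G_1$ and $G_2$ are subgraphs of $G$, certainly $\G_k(G_1)\cup\G_k(G_2)\subseteq\G_k(G)$. Moreover a clique lying in both $G_1$ and $G_2$ has all its vertices in $V_1\cap V_2$ and all its edges in $E_1\cap E_2$, and conversely, so one checks directly that $\G_k(G_1)\cap\G_k(G_2)=\G_k(H)$. With these two facts in hand, ordinary inclusion-exclusion for finite sets, $|\G_k(G_1)\cup\G_k(G_2)|=|\G_k(G_1)|+|\G_k(G_2)|-|\G_k(G_1)\cap\G_k(G_2)|$, gives the per-dimension identity, provided the reverse inclusion $\G_k(G)\subseteq\G_k(G_1)\cup\G_k(G_2)$ also holds.

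That last inclusion is the heart of the argument. Given a clique $W$ of $G$, I would first show its vertex set lies entirely in $V_1$ or entirely in $V_2$: if some $a\in W$ lay outside $V_1$ and some $b\in W$ outside $V_2$, then $a\in V_2$ and $b\in V_1$ (as $V=V_1\cup V_2$), and the edge $(a,b)$ of the clique would have to lie in $E_1\cup E_2$, forcing $a\in V_1$ or $b\in V_2$, a contradiction; the degenerate case $a=b$ would put $a\notin V$. Hence $W\subseteq V_1$ or $W\subseteq V_2$, and because $G_1,G_2$ are generated subgraphs, all edges spanned by $W$ are present in the corresponding piece, so $W$ is a genuine clique of $G_1$ or of $G_2$.

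I expect this last point to be the only real obstacle, and it is also where the generated-subgraph hypothesis is indispensable: for arbitrary (non-induced) subgraphs the formula is simply false. For instance, covering the triangle $K_3$ by the path $G_1$ on its three vertices and one remaining edge $G_2$ gives $\chi(G_1)=1$, $\chi(G_2)=1$ and $\chi(H)=2$, hence $\chi(G_1)+\chi(G_2)-\chi(H)=0\ne 1=\chi(G)$, the obstruction being that the $2$-simplex of $G$ is a clique of neither piece. Once the induced hypothesis guarantees that every clique of $G$ survives intact in $G_1$ or $G_2$, the three set-level facts assemble into the per-dimension identity, and the alternating sum finishes the proof.
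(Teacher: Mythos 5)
Your proof takes essentially the same route as the paper's: reduce to the per-simplex identity $v_k(G) = v_k(G_1) + v_k(G_2) - v_k(H)$ and sum the alternating series. The paper merely asserts that identity, whereas you supply the actual verification (every complete subgraph of $G$ has its vertex set inside $V_1$ or inside $V_2$, and the cliques common to both pieces are exactly those of $H$) and correctly observe, with a counterexample, that the generated-subgraph reading of the hypothesis is what makes this step work.
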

\begin{proof}
The number $v_k$ of $K_{k+1}$ subgraphs of $G_1,G_2,G,H$ satisfies
$$ v_k(G) = v_k(G_1) + v_k(G_2) - v_k(H) \; . $$
Adding up the alternate sum, leads to the claim.
\end{proof}

Given an injective function $f:V \to \R$ 
and a vertex $x \in V$, define $G_f(x)$ as the subgraph of $G$ which is
generated by the vertices $V(x) = \{ y \; | \; f(y)<f(x) \; \}$.
Nonzero index critical points are the vertices,
where the Euler characteristic of $G(x)=(V(x),E(x))$ changes:

\begin{propo}[Poincar\'e-Hopf]
$\chi(G(x)) = \sum_{y \in V(x)} i_f(y)$ for any injective function $f$ and vertex $x$. 
\end{propo}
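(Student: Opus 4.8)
The plan is to build $G(x)$ up one vertex at a time in the order prescribed by $f$ and to track the Euler characteristic along this filtration. List the elements of $V(x)$ as $y_1,y_2,\dots,y_m$ with $f(y_1)<f(y_2)<\cdots<f(y_m)$, and let $G_k$ be the subgraph of $G$ generated by $\{y_1,\dots,y_k\}$, so that $G_0=\emptyset$ and $G_m=G(x)$. Since $f(y_k)<f(x)$, every vertex of $f$-value below $f(y_k)$ already lies in $V(x)$, hence $\{y_1,\dots,y_{k-1}\}$ is exactly the set of vertices with $f$-value less than $f(y_k)$. The whole statement will then follow by telescoping, provided each single step contributes precisely the index, that is $\chi(G_k)-\chi(G_{k-1})=i_f(y_k)$.

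For the key step, I observe that passing from $G_{k-1}$ to $G_k$ adjoins the vertex $y_k$ together with all edges joining it to vertices already present; by the remark above, those already-present neighbors are exactly the vertices generating $S^-_f(y_k)$. I therefore write $G_k=G_{k-1}\cup \mathrm{St}(y_k)$, where $\mathrm{St}(y_k)$ is the subgraph generated by $\{y_k\}\cup S^-_f(y_k)$. Because every subgraph in sight is induced (generated), the intersection of $G_{k-1}$ and $\mathrm{St}(y_k)$ is the induced subgraph on the common vertex set, which is precisely $S^-_f(y_k)$. The additivity property of the preceding lemma then gives
$$ \chi(G_k) = \chi(G_{k-1}) + \chi(\mathrm{St}(y_k)) - \chi(S^-_f(y_k)) \; . $$

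It remains to compute $\chi(\mathrm{St}(y_k))$. The star is a cone over $S^-_f(y_k)$: the apex $y_k$ is joined to every vertex of $S^-_f(y_k)$, so each complete subgraph containing $y_k$ is obtained by adjoining $y_k$ to a complete subgraph of $S^-_f(y_k)$ (the empty clique yielding $y_k$ alone). Hence the simplex counts satisfy $v_0(\mathrm{St}(y_k))=v_0(S^-_f(y_k))+1$ and $v_j(\mathrm{St}(y_k))=v_j(S^-_f(y_k))+v_{j-1}(S^-_f(y_k))$ for $j\geq 1$. Forming the alternating sum, the $S^-_f(y_k)$ contributions cancel in pairs and only the apex survives, so $\chi(\mathrm{St}(y_k))=1$. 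Substituting back yields $\chi(G_k)-\chi(G_{k-1})=1-\chi(S^-_f(y_k))=i_f(y_k)$, and summing over $k=1,\dots,m$ with $\chi(G_0)=0$ gives $\chi(G(x))=\sum_{k=1}^m i_f(y_k)=\sum_{y\in V(x)} i_f(y)$.

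The main obstacle is the bookkeeping hidden in the last two moves: one must verify that $G_{k-1}\cap \mathrm{St}(y_k)$ really equals $S^-_f(y_k)$ on the nose (both vertices and edges), since the lemma is applied to this intersection, and that the cone identity $\chi(\mathrm{St}(y_k))=1$ holds for an \emph{arbitrary}, possibly disconnected or empty, $S^-_f(y_k)$. Both claims reduce to the simplex-counting identities above, which is why I expect the argument to be short once the gradient filtration $G_0\subset G_1\subset\cdots\subset G(x)$ is in place.
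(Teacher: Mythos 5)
Your proposal is correct and follows essentially the same route as the paper: it uses the inclusion--exclusion lemma to show that adjoining the vertex $y_k$ along the cone over $S^-_f(y_k)$ changes the Euler characteristic by $1-\chi(S^-_f(y_k))=i_f(y_k)$, and then telescopes over the gradient filtration. You simply spell out the details (the identification of the intersection with $S^-_f(y_k)$ and the cone computation $\chi(\mathrm{St}(y_k))=1$) that the paper's two-line proof leaves implicit.
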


\begin{proof}
By the lemma, a vertex $x$ changes the
Euler characteristic from $\chi(\{x\} \cup S^-(x)) = 1$ to $\chi(S^-(x)) \neq 1$.
Adding up the indices gives so $\chi(G(a)) = \sum_{x \in V(a)} i_f(x)$.
\end{proof}

This provides an alternative proof for the Poincar\'e-Hopf theorem 
$$  \chi(G) = \sum_{x \in V} i_f(x) \;  $$
for graphs (see \cite{indexexpectation}).

\section{Contractibility}

Contractibility in itself is defined inductively with respect to the order $|V|=v_0$ of the graph.

\begin{defn}
A graph with one vertex is {\bf contractible in itself}. Having defined contractible in itself 
for graphs of order $|V|<n$, a graph $G=(V,E)$ of order $|V|=n$ is called {\bf contractible in itself}, 
if there exists an injective function $f:V \to \R$ 
such that $S^-_f(x)$ is contractible in itself or empty for every vertex $x$. 
\end{defn}

{\bf Remark}. Since $S^-_f(x)$ does not contain the point $x$, the order of $S^-(x)$ is smaller
than the order of $G$ and the induction with respect to the order is justified. 

\begin{defn}
Given a graph $G=(V,E)$ and an additional vertex $x \in V$, the 
new graph $G'=(V \cup \{ x \},E \cup \{ (y,x) \; | \; y \in V \; \})$
is called the {\bf pyramid extension} of $G$. The new vertex $x$ has the unit sphere 
$S(x)=G$. 
\end{defn}

This construction allows to build up contractible graphs by aggregation. 
If $G$ is contractible then the new graph $G'$ is still contractible. 

\begin{propo}
Every in itself contractible graph can be constructed from a single point graph by
successive pyramid extension steps.
\end{propo}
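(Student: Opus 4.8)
The plan is to read the construction directly off the function $f$ that witnesses contractibility, so that no new tool beyond the definition is needed. First I would take the injective $f:V\to\R$ guaranteed by the definition and list the vertices in increasing order $f(x_1)<f(x_2)<\cdots<f(x_n)$. This order produces the sublevel filtration $G_k$, the subgraph generated by $\{x_1,\dots,x_k\}$, with $G_1=\{x_1\}$ a single point and $G_n=G$. The goal is then to check that each inclusion $G_k\subseteq G_{k+1}$ is exactly one pyramid extension step.

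The single observation that does all the work is that inside $G_{k+1}$ the only neighbours of the top vertex $x_{k+1}$ are vertices of strictly smaller $f$-value, and these are precisely the vertices of $S^-_f(x_{k+1})$. Hence passing from $G_k$ to $G_{k+1}$ glues the apex $x_{k+1}$ onto every vertex of the subgraph $S^-_f(x_{k+1})\subseteq G_k$, leaving the rest of $G_k$ untouched; this is a pyramid construction erected over $S^-_f(x_{k+1})$. By the defining property of contractibility, $S^-_f(x_{k+1})$ is contractible in itself, being empty only at the global minimum $x_1$, which we have already taken as the starting single point. Thus every step of the filtration is a pyramid extension over an in-itself contractible subgraph, and iterating from $G_1$ up to $G_n$ exhibits $G$ as built from one point by successive such steps.

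The point that needs care, and the only place where the statement is more than a restatement of the definition, is the meaning of ``pyramid extension step'': the extension is not taken over the whole of $G_k$ but over the contractible down-sphere $S^-_f(x_{k+1})$, with the new vertex coned onto that subgraph only. I would therefore state explicitly that a step means gluing a fresh apex onto a contractible subgraph (the operation described informally as a pyramid construction on a contractible subgraph), so that this cone-over-a-subgraph matches the pyramid extension of the preceding definition; coning over all of $G_k$ instead would only ever yield complete graphs, whereas e.g.\ the contractible path needs the subgraph version. Once this is made precise the argument is a straightforward induction on $|V|$ whose inductive content is supplied entirely by the contractibility hypothesis on each $S^-_f(x)$, with the empty down-sphere handled as the base point, and no separate estimate or further case analysis is required.
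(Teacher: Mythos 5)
Your proposal is correct and follows essentially the same route as the paper: take the injective function $f$ witnessing contractibility, order the vertices by $f$, and observe that each step of the sublevel filtration attaches the new vertex as an apex over the contractible subgraph $S^-_f(x_{k+1})$. Your explicit remark that the ``pyramid extension step'' must be read as coning over the contractible subgraph $S^-_f(x)$ rather than over all of $G_k$ is a worthwhile clarification of a point the paper leaves implicit, but it does not change the argument.
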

\begin{proof}
By definition, there exists an injective function $f$ on $G$ which has only the
global minimum $z$ as a critical point. The sequence of graphs $G_f(a)$ defined by
this function $f$ produces pyramid extensions. Because $f$ is injective, only one vertex
is added at one step. In each case, the extension uses $H=S^-_f(x)$ as the contractible
set at the pyramid extension is done.
The reverse is true too. A sequence of pyramid constructions defines an
injective function. Every additional vertex has a function value larger than all the other 
function values. 
\end{proof}

\begin{defn}
The deformation $G \to G'$ is called a {\bf homotopy step} if it is 
a pyramid extension on a contractible subgraph $H$ or if it is the 
reversed process where a point $x$ for which $S(x)$ is contractible of $G'$ together with all connections 
is removed. Two graphs are called {\bf homotopic} if one can get from one to the other by a 
finite sequence of homotopy steps. 
\end{defn}

\begin{propo}
Homotopy defines an equivalence relation on finite simple graphs.
\end{propo}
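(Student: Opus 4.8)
The plan is to verify the three defining properties of an equivalence relation --- reflexivity, symmetry, and transitivity --- directly from the definition of homotopy as a finite chain of homotopy steps. The whole argument turns on the observation that the notion of a homotopy step was deliberately set up to be closed under reversal, so most of the work reduces to bookkeeping about finite sequences rather than any new geometric input.

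First I would dispose of reflexivity and transitivity, which are immediate. For reflexivity, $G$ is homotopic to itself through the empty chain of homotopy steps; if one insists on a nonempty chain, a pyramid extension on a single-vertex (hence contractible in itself) subgraph followed by its reverse deletion returns $G$ to itself. For transitivity, if $G = G_0 \to G_1 \to \cdots \to G_m = H$ is a chain of homotopy steps and $H = H_0 \to H_1 \to \cdots \to H_n = K$ is another, then concatenating them at the common graph $G_m = H = H_0$ produces a single finite chain of homotopy steps from $G$ to $K$, so $G$ is homotopic to $K$.

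The substantive point is symmetry, and this is where I would focus attention. It suffices to show that the reverse $G' \to G$ of any single homotopy step $G \to G'$ is itself a homotopy step; then, given a chain $G = G_0 \to G_1 \to \cdots \to G_m = H$, reversing the order and reversing each individual step yields a valid chain $H = G_m \to G_{m-1} \to \cdots \to G_0 = G$, witnessing that $H$ is homotopic to $G$. To see that each step is reversible I examine the two admissible step types. If $G \to G'$ is a pyramid extension on a contractible subgraph $H \subseteq G$, then the new vertex $x$ satisfies $S(x) = H$ in $G'$, which is contractible; hence deleting $x$ from $G'$ is exactly the reverse step type (removal of a vertex whose unit sphere is contractible), and it recovers $G$. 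Conversely, if $G \to G'$ removes a vertex $x$ whose unit sphere $S(x)$ is contractible, then re-adding $x$ joined to all of $S(x)$ is a pyramid extension on the contractible subgraph $S(x)$, again an admissible homotopy step returning $G$. Since the two step types are mutually inverse, the reverse of every homotopy step is a homotopy step, and symmetry follows.

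The only place where care is genuinely required is this matching of the two step types as inverses: one must confirm that the contractibility hypothesis making a pyramid extension legal is precisely the contractibility hypothesis (namely $S(x)$ contractible) that makes the corresponding deletion legal. Because the definition of a homotopy step was phrased symmetrically with exactly this in mind, the verification is routine once stated, and in particular no induction on the order of the graph is needed here.
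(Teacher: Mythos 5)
Your proof is correct and follows essentially the same route as the paper's: reflexivity and transitivity by (empty and concatenated) chains of steps, and symmetry because the two step types --- pyramid extension over a contractible subgraph and deletion of a vertex with contractible unit sphere --- are defined precisely as mutual inverses. You simply spell out the step-reversal argument that the paper compresses into ``by definition.''
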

\begin{proof}
A graph is homotopic to itself. If $G$ is homotopic to $H$ and $H$ is homotopic to $K$, then 
the deformation steps can be combined to get a homotopy from $G$ to $K$. 
Also by definition is that if $G$ is homotopic to $H$ then $H$ is homotopic to $G$. 
\end{proof}

{\bf Remarks.} \\
{\bf 1)} Graphs can be homotopic to a single point graph without being contractible. \\
{\bf 2)} Using contractions or expansions of a graph alone defines {\bf simple homotopy} between two graphs $G_1,G_2$.
It also defines an equivalence relation on finite simple graphs. As the textbook \cite{Giblin} 
example of the dunce hat example shows, it is a finer equivalence relation leading to more equivalence classes. \\
{\bf 2)} For small $n$, we can look at all connected graphs of order $n$ and count how many homotopy types
there are. The number $h(n)$ of homotopy types does not decrease with $n$ 
since we can also make a pyramid extension over a single point without changing homotopy. While
the number of connected graphs of order $n$ grows super exponentially, the number of homotopy
types might grow polynomially only, but we do not know. So far, no upper bound except the trivial 
$h(n) \leq 2^{n(n-1)/2}$ bound have been established nor a lower bound beside the trivial $h(n) \geq C n$ 
obtained by wedge gluing spheres. But estimating the number $h(n)$ seems also never have been asked
for Ivashchenko homotopy. \\

{\bf Examples.} \\
{\bf 1)} For connected graphs of order 1,2,3, there is exactly one homotopy type. \\
{\bf 2)} For connected graphs of order 4,5, there are 2 homotopy types, $C_4,C_5$ are not contractible. \\
{\bf 3)} For connected graphs of order 6,7, there are 4 homotopy types since we can already build the 
octahedron, a two-dimensional non-contractible sphere, as well as a figure $8$ graph.  \\
{\bf 4)} For connected graphs of order 8, there are already at least 7 homotopy types, 
since we can build a 3 dimensional sphere,
the 16 cell as well as a circle with two ears, a clover both with Euler characteristic $-2$ and an 
octahedron with a handle. Figure~\ref{homotopyclass} shows $10$ classes. The last four are all homotopic
and algebraic topology can not distinguish them. The Betti vectors (the length of the vector 
is always the dimension of the maximal simplex in the graph) 
for the 10 graphs in the order given are $\beta=(1,1)$, $\beta=(1,0,0,0,0,0,0,0)$,
$\beta=(1,0,1,0)$, $\beta=(1,2)$, $\beta=(1,0,0,1)$, $\beta=(1,1,1)$,
$\beta=(1,3)$, $\beta=(1,3)$, $\beta=(1,3)$, $\beta=(1,3)$.  \\

\begin{figure}
\scalebox{0.25}{\includegraphics{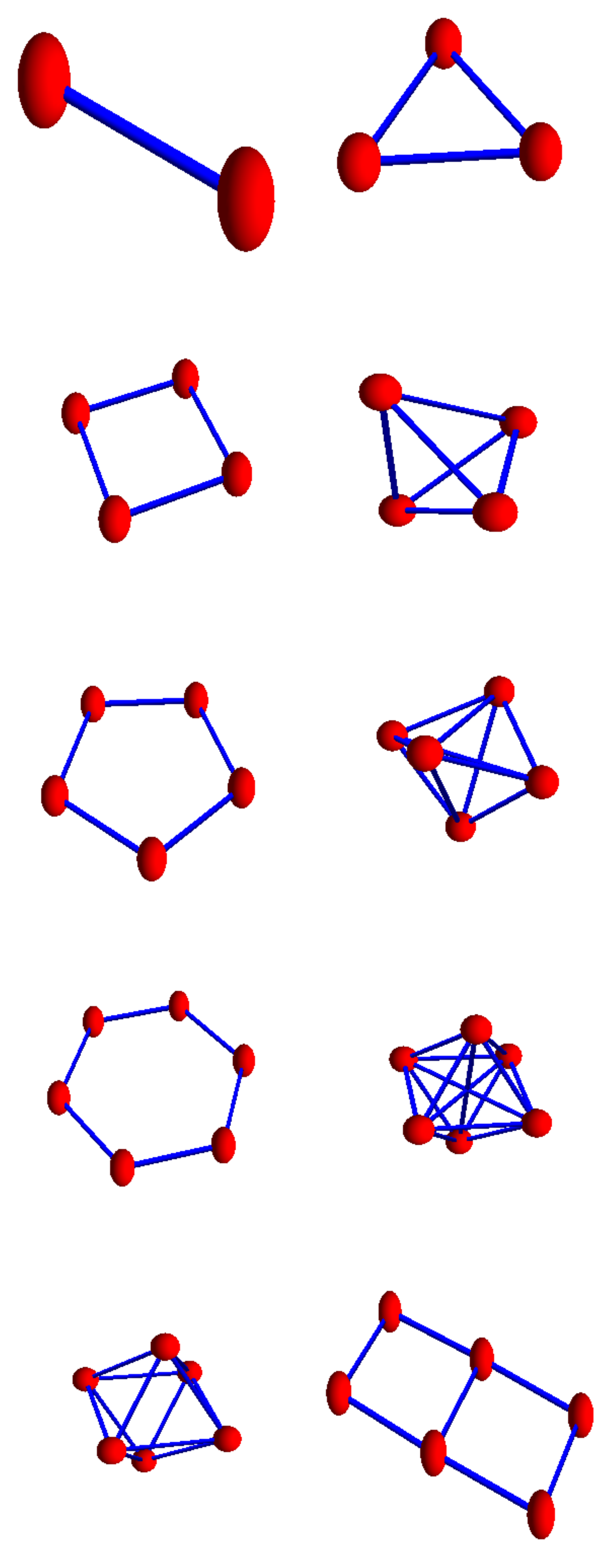}}
\scalebox{0.25}{\includegraphics{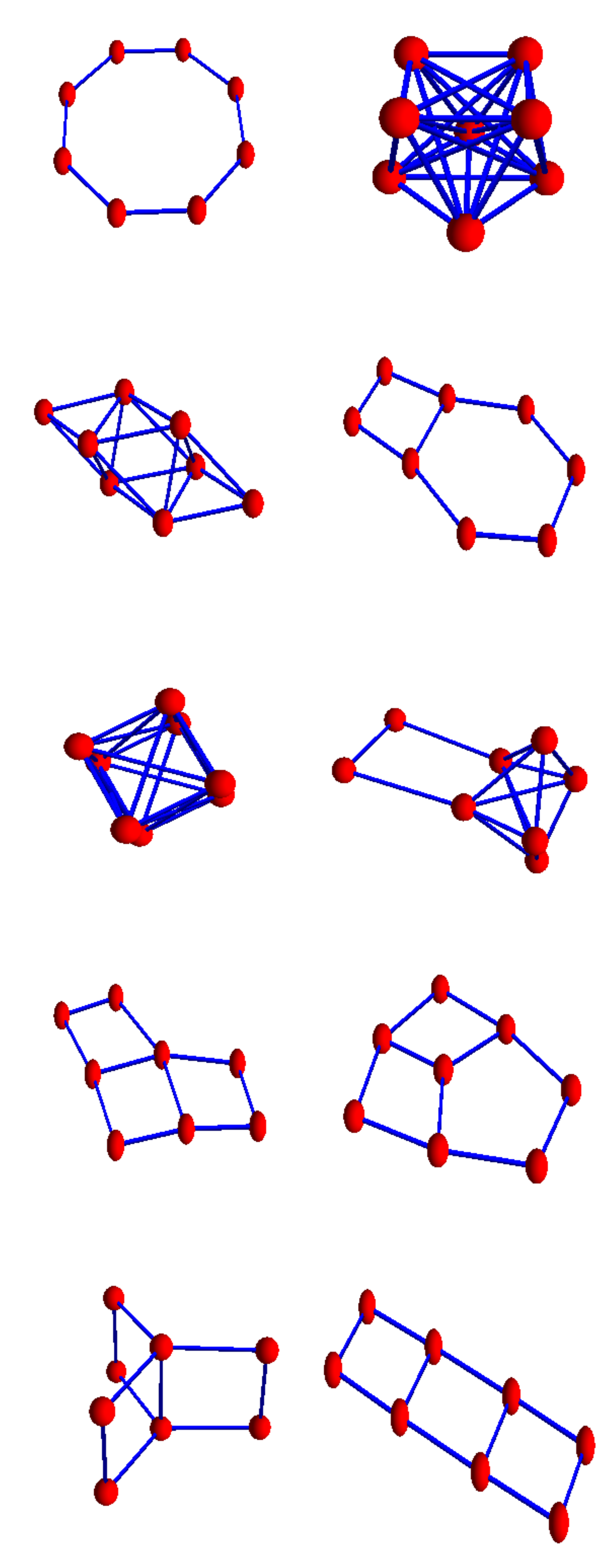}}
\caption{
Homotopy classes of graphs for small order. To the left, we see homotopy types for
graphs of order up to $6$. To the right, we see examples of 
homotopy types for graphs of order $8$. The picture illustrates
$h(1)=h(2)=h(3)=1, h(4)=h(5)=2,h(6)=h(7)=4$ and that $h(8) \geq 7$.
The last $4$ graphs in the picture have all the same Betti vector 
$\beta=(1,3)$ so that they are homologically indistinguishable. Indeed, one 
can deform them into each other.
A computer search is in general expensive since there are $2^{n(n-1)/2}$ graphs of order 
$n$ which is $35184372088832$ for $n=9$ already and a brute force check for 
homotopy would need many comparisons
\label{homotopyclass}
}
\end{figure}

{\bf Remarks.} \\
{\bf 1)} The homotopy definition given in \cite{I94} uses also
edge removals and additions which can be realized with pyramid extensions  \cite{CYY}.
This notion of homotopy for graphs goes back to deformation notions defined in \cite{Whitehead,Alexander}. \\
{\bf 2)} As in the continuum, homotopy is not inherited by subgraphs. If $G$ is homotopic to $K$
and $H$ is a subgraph of $G$, then $H$ does not necessarily have a homotopic deformation in $K$.
A cyclic subgraph $H$ of a complete graph $G=K_5$ for example can not pass over to a homotopic analogue 
in a one point graph $K=P_1$. \\

\section{Contractibility within a graph}

Assume we have a homotopy deformation $G \to G'$ from a graph $G$ to second graph $G'$. 
This deformation is given by contraction or expansion steps 
$$  G=G_0,G_1,G_2,\dots,G_n=G' \; . $$ 
Assume now we have a subgraph $K$ of $G$. For any homotopy step $G_k \to G_{k+1}$ over a subgraph $H$ 
and a subgraph $K_k$ of $G_k$, we can make the pyramid extension of $H_k = H \cap K_k$. 
We can follow $K=K_0,K_1, \dots, K_n=K'$ with this deformation, where $K_{k+1}$ is obtained
from $K_k$ by making a pyramid extension over $H_k$
Because $H_k$ are not necessarily contractible in themselves, these graphs are not 
necessarily homotopic. Different components can merge or holes can appear and disappear. 

\begin{defn}
Given a homotopy deformation $G \to G'$ and a subgraph $K$ of $G$, we call the deformed graph $K'$
the {\bf deformation induced by the homotopy}.
\end{defn}

{\bf Remark.} The deformed graph $K'$ not only depends on $K$ and $G$, it also 
depends on the chosen deformation. 

\begin{defn}
A subgraph $H$ of $G$ is called {\bf contractible in $G$} if it is contractible in itself
or if there is a contraction $G'$ of $G$
for which the induced deformation $H'$ is contracted to a graph with one vertex only. 
\end{defn}

{\bf Remark.} \\
{\bf 1)} One could introduce a notion {\bf $H$ is homotopic to point within $G$}
by asking that there is a homotopy deformation (and not necessarily only a contraction) 
$G'$ to $G$ such that the induced deformation $H'$ has one vertex only. Using this instead
of contractible in $G$ would change things.  The Bing house, the igloo or the dunce hat show
that this is not the same. The dunce hat would have category $1$ with this notion. \\
{\bf 2)} In order that a subgraph of $G$ is contractible in $G$ to a point, we 
do not allow general homotopy deformations of $G$. 
This is also always the assumption in the continuum, where contractibility of a subset $H$ in $M$ 
is defined that the inclusion map $H \to M$ is homotopic to a constant map. \\
{\bf 3)} The notion contractibility in itself has to be added in ``contractibility in G'':
for example, since there are no contractions of
the octahedron, only the one-point subgraphs are contractible within $G$ in the narrower sense
so that the category of the octahedron would be $6$, the number of vertices. This is
larger than the minimal number $2$ of critical points. \\
% Igloo: see Topology Seminar, Winconsin, 1965, 

{\bf Examples.} \\
{\bf 1)} The entire graph $G$ is contractible in itself if and only if it is contractible in $G$. \\
{\bf 2)} Any subgraph of a contractible graph $G$ is contractible in $G$. \\
{\bf 3)} In the continuum, if $G$ is connected, then a finite discrete subgraph with 
         no vertices is contractible. One can deform the graph such that the points 
         become a single point. This is not always true in the discrete.
         The graph $P_4$ in $C_4$ for example can not be deformed to a point simply 
         because no contractions of $C_4$ are possible.\\
{\bf 4)} One could call $G$ {\bf simply connected} if every cyclic 
         subgraph in $G$ is contractible in $G$ but this would be
         too narrow. 
         The equator $H$ in an octahedron $G$ for example is not contractible in $G$ because
         no contractions of $G$ are possible. There are homotopy deformations of 
         $G$ however which contract $H$ to a point: blow up the octahedron to an 
         icosahedron for example using a cobordism with a three dimensional graph having the 
         octahedron and icosahedron as a boundary, then shrink the icosahedron back, 
         this time however deforming the circle to a point.  \\

Both ``contractibility in itself'' and ``contractibility in $G$'' are not homotopy invariants.
There are graphs which are contractible in $G$ but where a contraction renders it non-contractible.

\section{Category} 

\begin{defn}
A finite set of subgraphs $G_j=(V_j,E_j)$ of $G=(V,E)$ is a
{\bf cover} of $G$ if $\bigcup_j V_j=V$ and $\bigcup_j E_j=E$.
\end{defn}

\begin{defn}
The {\bf topological Lusternik-Schnirelmann category} 
$\tcat(G)$ is the minimal number $n$ for which
there is a cover $\{U_j\}_{j=1}^n$ of $G$ with subgraphs $U_j$ of $G$ 
such that each $U_j$ is contractible in $G$.
Such a cover $\{U_j \; \}_{j=1}^n$ is called a {\bf category cover}.
The {\bf Lusternik-Schnirelmann category} $\cat(G)$ is defined as
the minimum of $\tcat(H)$, where $H$ runs over all to $G$ homotopic graphs.
\end{defn}

By definition, category is a homotopy invariant. 
To illustrate the contrast to contractibility, lets look at 
geometric Lusternik-Schnirelmann category as it is done in the continuum. 

\begin{defn}
The {\bf geometric Lusternik-Schnirelmann category} $\gcat(G)$ 
is the minimal number of {\bf in themselves contractible subgraphs} of $G$ which cover $G$.
The {\bf strong category} $\Cat(G)$ as the minimum of geometric categories 
$\gcat(G)$ among all $G'$ which are homotopic go $G$.
\end{defn}

{\bf Remarks.} \\
{\bf 1)} The geometric Lusternik-Schnirelmann category is not a homotopy invariant. 
This is the same as in the continuum. The strong category is an invariant. It has been introduced
in \cite{Ganea}. The dunce hat $G$ shows that also the topological Lusternik-Schnirelmann category 
is not a homotopy invariant. The graph $G$ is not collapsible in $G$ but there are graphs homotopic to
$G$ which have $\tcat(G)=1$. \\
{\bf 2)} The counting for category differs in the literature. We follow the counting assumption 
from papers like \cite{Fox,Takens,Bott82,Clapp,Singhof} which assume the category of a 
single point is $1$, which leads for manifolds to category values
$\cat(SU(n))=n$, $\cat(S^n)=2$, $\cat(RP^n)=n+1$, $\cat(T^n)=n+1$. 
An other part of the literature like \cite{CLOT} define $\cat(G)$ to be 
by one smaller which is more convenient in other situations. 
We chose the former definition because we want category in general to agree with the 
number of critical points as well as with the cohomologically defined cup length. \\
{\bf 3)} Since for category one has more possible sets to choose from, one 
has $\cat(G) \leq \Cat(G)$ and in general no equality as the example of Fox \cite{Fox} 
illustrated in Figure~{\ref{fox}} shows. \\
{\bf 4)} Again we see here why we have to require ``contractibility in $G$ ''.
The octahedron $G$ has the strong category $\Cat(G)=2$ and
would have $\tilde{\cat}(G)=6$, if $\tilde{\cat}(G)$ were the category defined by
``contractibility in $G$'' in the narrow sense without allowing the elements of the 
cover to be contractible. \\
{\bf 5)} $\gcat(M)-\cat(M)$ can be arbitrary large \cite{ClappMontejano} in the continuum.
By using triangularizations, we expect $\gcat(G)-\tcat(G)$ to become arbitrary large
also in the graph theoretical sense. \\
{\bf 5)} A theorem of Ganea tells that 
the strong category satisfies $\cat(M) \leq \Cat(M) \leq \cat(M)+1$ in the continuum. Again,
this is expected to be the same in the graph theoretical sense. \\
{\bf 6)} A theorem of Singhof tells $\cat(M \times T^1) = \cat(M) + 1$ in the continuum if the
space is nice enough and the dimension is large enough. 
A conjecture of Ganea stated $\cat(M \times S^n) = \cat(M)+1$ in general
but there are counter examples even for smooth manifolds by Iwase who found also 
smooth manifold $M$ counter examples to $\cat(M \setminus \{p\}) = \cat(M)-1$.  \\

\begin{figure}
\scalebox{0.35}{\includegraphics{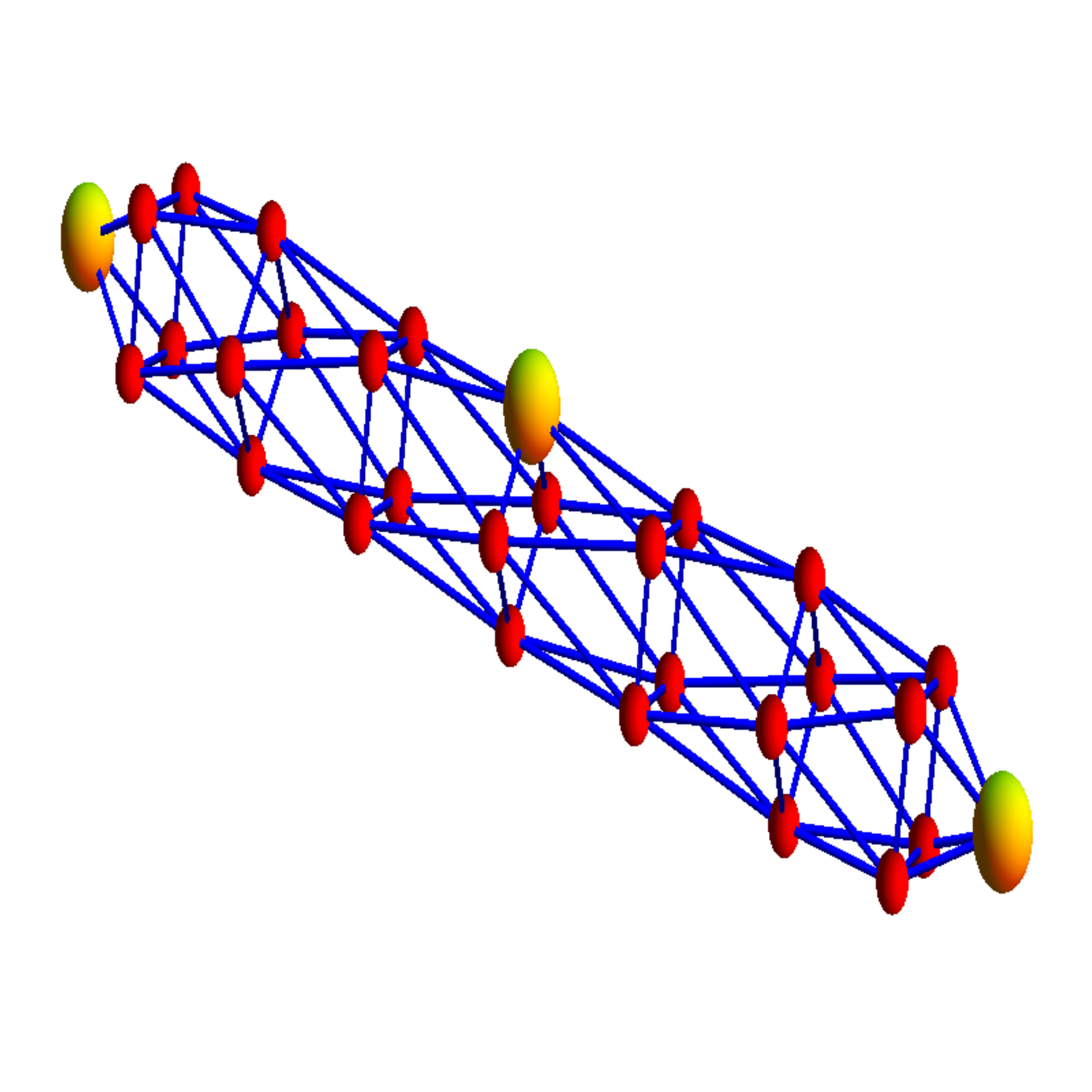}}
\caption{
A graph version of the example of Fox \cite{Fox} is
a spherical polyhedron, where
three vertices are identified. It is possible to cover the 
graph $G$ by two subgraphs $U,V$ which are in $G$ contractible so that 
$\tcat(G)=2$. However, the geometric category is $\Cat(G)=3$. We need three
subgraphs contractible in themselves to cover $G$. It is an example showing
why category is defined as it is. An other example is the octahedron which 
is noncontractible and illustrating that in the discrete, we need ``contractibility in $G$''
as defined. In the continuum, this is
not necessary if the topological space $M$ is normal
and the subsets $U_j$ of $M$ appearing in the cover are neighborhood retracts of $M$. 
\label{fox}
}
\end{figure}

{\bf Examples.}  \\
{\bf 1)} The discrete graph $G=P_n$ of $n$ vertices and no edges has $\cat(G)=\Cat(G)=n$. \\
{\bf 2)} The graph $G=K_n$ has category $\cat(G)=\Cat(G)=1$ because it is contractible. \\
{\bf 3)} Any connected tree $G$ has category $\cat(G)=\Cat(G)=1$. \\
{\bf 4)} The circular graph $C_n$ has $\cat(C_n)=\tcat(C_n)=\Cat(C_n)=2$.  \\
{\bf 5)} The octahedron and the icosahedron both have $\cat(G)=\tcat(G) = \Cat(G)=2$. \\
{\bf 6)} Any discrete two torus graph satisfies  $\cat(G)=\tcat(G)=\Cat(G)=3$. \\
{\bf 7)} A figure 8 graph has $\cat(G) = \tcat(G)=\Cat(G)=2$. \\
{\bf 8)} Figure~\ref{fox} shows an example of a graph where $\cat(G)=2<\Cat(G)=3$.  \\
{\bf 9)} For the dunce hat, $\tcat(G)=2=\Cat(G)$. 

\begin{lemma}
A pyramid extension over the full graph $G$ has category $\cat(G)=\tcat(G)=\Cat(G)=1$. 
\end{lemma}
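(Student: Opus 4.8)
The plan is to reduce all three equalities to one structural fact: the pyramid extension $G'$ of an \emph{arbitrary} finite simple graph $G$ is contractible in itself. Granting this, everything else is formal. A graph contractible in itself is in particular contractible in $G'$ (being contractible in itself is one of the two cases in the definition of ``contractible in $G$''), so the single-element cover $\{G'\}$ gives $\tcat(G')\le 1$, and the same one-element cover by an in-itself-contractible subgraph gives $\gcat(G')\le 1$. Since any nonempty graph needs at least one set in a cover, both equal $1$. Taking $H=G'$ in the defining minima then yields $\cat(G')\le\tcat(G')=1$ and $\Cat(G')\le\gcat(G')=1$, and as each invariant is at least $1$ we conclude $\cat(G')=\tcat(G')=\Cat(G')=1$.

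Thus the real content is the structural claim, which I would prove by induction on the order $n=|V(G)|$. Let $a$ be the apex of $G'$, the new vertex adjacent to every vertex of $G$. I choose an injective $f:V(G')\to\R$ whose global minimum is attained at $a$, with $f$ arbitrary but injective on $V(G)$. At $a$ itself the set $S^-_f(a)$ is empty, so there is nothing to check. For an original vertex $y\in V(G)$, the neighbors of $y$ in $G'$ are $N_G(y)\cup\{a\}$, hence $S^-_f(y)$ is generated by $\{a\}\cup W$, where $W=\{z\in N_G(y) : f(z)<f(y)\}$ is the lower link of $y$ computed inside $G$.

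The key observation is that $a$ is adjacent to every vertex of $W$, so the subgraph generated by $\{a\}\cup W$ is exactly the pyramid extension, with apex $a$, of the subgraph of $G$ generated by $W$. That base subgraph sits inside $V(G)\setminus\{y\}$ and so has order at most $n-1$; by the induction hypothesis its pyramid extension is contractible in itself, and therefore $S^-_f(y)$ is contractible in itself. (When $W=\emptyset$ this degenerates to the single vertex $\{a\}$, which is contractible, so the case is consistent.) Hence $S^-_f(x)$ is contractible or empty for every $x$, and by the inductive definition of Section 3 the graph $G'$ is contractible in itself; the base case $n=1$, where $G'=K_2$, is immediate.

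I do not expect a genuine obstacle: once $a$ is placed at the bottom of the filtration, the verification is mechanical. The one point to state carefully is the ``cone of a cone'' identity --- that putting $a$ at the minimum turns each lower link $S^-_f(y)$ into a pyramid extension over a strictly smaller graph --- since this is precisely what drives the induction. It is worth stressing that $G$ is unrestricted here (for instance the non-contractible $C_4$ cones to the contractible wheel $W_4$), so the lemma says not merely that coning preserves contractibility but the stronger fact that coning produces it.
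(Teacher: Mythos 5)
Your proposal is correct and follows essentially the same route as the paper: place the apex at the global minimum of an injective function, observe that every other lower link $S^-_f(y)$ is itself a pyramid extension (with the apex) over a strictly smaller graph, and conclude contractibility in itself, from which the three category statements follow formally. You merely make explicit the induction on order and the degenerate cases that the paper's one-line ``is a pyramid extension and so contractible'' leaves implicit.
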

\begin{proof}
Take a function on the vertex set $V$ which has the minimum at $x$. For every vertex $y \in V$
different from $x$, the graph $S^-(y)$ is a pyramid extension and so contractible, 
independent on whether $G$ was contractible. This means that $x$ is the only critical point. 
\end{proof}

It follows that any unit ball $B(x)$ in a graph always is of category $1$ because
$B(x)$ is the pyramid extension of the sphere $S(x)$. 

\begin{defn}
A vertex $x$ is called a {\bf regular point} for $f$ if $S_f^-(x)$ is contractible in itself. 
Any other point is called a {\bf critical points} of $f$.
\end{defn}

{\bf Remarks.} \\
{\bf 1)} Figure~\ref{example1} shows an example of a critical point with zero index.
It is an example for which $S^-(x)$ has Euler characteristic $1$ even so $S^-(x)$ is not
contractible.  \\
{\bf 2)} A graph $G=(V,E)$ is contractible if and only if there is an injective
function $f: V \to \R$ such that $f$ has only one critical point.
The only critical point is then the global minimum.  \\
{\bf 3)} A contractible graph $H$ is connected because every minimum on a connected component is a critical point. \\
{\bf 4)} A contractible graph has Euler characteristic $1$. \\
{\bf 5)} Unlike in the continuum, only minima and not maxima
are always critical points. In the continuum, the boundary matters. For the open unit disc $M$ in the plane for 
example we can have a function which has only a saddle point as a critical point. 
In the discrete minima are always a critical point. \\
{\bf 6)} A subgraph $H$ of $G$ can be contractible in $G$ without being contractible in itself. \\

\begin{propo}
Homotopic graphs have the same Euler characteristic. Euler characteristic is a homotopy 
invariant. 
\end{propo}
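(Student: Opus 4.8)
The plan is to reduce the statement to a single homotopy step and then compute the effect of that step on Euler characteristic using the inclusion-exclusion Lemma proved above. Since homotopy is by definition generated by a finite chain $G=G_0,G_1,\dots,G_n=H$ in which each $G_{k+1}$ is obtained from $G_k$ by one homotopy step (or $G_k$ from $G_{k+1}$), and since $\chi$ attaches a single number to each graph, it suffices to prove $\chi(G_k)=\chi(G_{k+1})$ for one step; the general case then follows by transitivity along the chain.

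First I would fix a single pyramid-extension step $G \to G'$ over a contractible subgraph $H$ of $G$. Here $G'$ adjoins a new vertex $x$ joined to every vertex of $H$, so that in $G'$ the unit sphere is $S(x)=H$ and the unit ball $B(x)$, the subgraph generated by $\{x\}\cup V(H)$, is exactly the pyramid extension of $H$. I would then record the two set-theoretic identities $G'=G\cup B(x)$ and $G\cap B(x)=H$: the new vertex $x$ lies only in $B(x)$, while the vertices and edges common to $G$ and $B(x)$ are precisely those of $H$ (no edge among $V(H)$ is created or destroyed on the old vertices).

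Next I would apply the inclusion-exclusion Lemma to this decomposition, which gives
\[
\chi(G') = \chi(G) + \chi(B(x)) - \chi(H).
\]
Both correction terms equal $1$: the subgraph $H$ is contractible by hypothesis, and $B(x)$ is the pyramid extension of the contractible graph $H$, hence itself contractible, so that $\chi(H)=\chi(B(x))=1$ because every contractible graph is built from a single point by pyramid extensions and therefore has Euler characteristic $1$. Substituting yields $\chi(G')=\chi(G)$. The reverse homotopy step, deleting a vertex $x$ whose sphere $S(x)$ is contractible, is the very same identity read backwards, so it too preserves $\chi$.

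The one point that needs care, the main obstacle such as it is, is justifying $\chi(B(x))=1$ without circularity, since the fact that a contractible graph has Euler characteristic $1$ is itself proved by exactly this kind of pyramid computation. I would resolve this by invoking the already-established statement that contractible graphs have $\chi=1$; equivalently, one runs the same inclusion-exclusion identity inductively along the pyramid construction of $B(x)$ guaranteed by the earlier proposition, starting from the single point where $\chi=1$, each step contributing $+1-1=0$. With $\chi(B(x))=\chi(H)=1$ in hand the single-step invariance is immediate, and transitivity along the defining chain of homotopy steps gives the Proposition, so that Euler characteristic is a homotopy invariant.
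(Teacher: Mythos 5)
Your proof is correct and follows essentially the same route as the paper: reduce to a single pyramid-extension step, decompose $G'$ as $G\cup B(x)$ with intersection $H$, and apply the inclusion-exclusion lemma to get $\chi(G')=\chi(G)+\chi(B(x))-\chi(H)=\chi(G)+1-1=\chi(G)$. Your additional care about the non-circularity of $\chi(B(x))=1$ (running the same identity inductively along the pyramid construction) is a sound elaboration of a point the paper takes for granted.
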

\begin{proof}
Any homotopy deformation step
with a new vertex $x$ over a subgraph $H$ or its reverse leaves the Euler
characteristic invariant. Then 
$\chi(G \cup \{ x \; \}) = \chi(G) + \chi(B(x)) - \chi(H) = \chi(G) + 1 - 1= \chi(G)$. 
\end{proof}

{\bf Remark.} \\
{\bf 1)} This statement would also have followed from the Euler-Poincar\'e formula $\chi(G) = \sum_j (-1)^j b_j$, 
where $b_j$ is the $j$'th Betti number. Since homology groups are homotopy invariants \cite{I94}, also 
Euler characteristic is.  \\

{\bf Examples.} \\
{\bf 1)} By definition, any contractible graph is homotopic to a single point.  \\
{\bf 2)} Two circular graphs $C_n,C_m$ with $n>3,m>3$ are homotop. To show this, we build an annulus shaped
graph which has $C_n,C_m$ as boundary and which is both homotopic to $C_n$ and $C_m$. This example illustrates
how h-cobordism can be seen as a special case of homotopy. \\
{\bf 3)} Similarly, an octahedron and an icosahedron both have category $2$. They are homotopic but
not in an obvious way. One can not remove a vertex in any of the graphs for example
without changing both category and Euler characteristic. But one can build a three 
dimensional graph which has these two graphs as boundary and which is homotopic. 
Again, this is a  $h$-cobordism.

\begin{figure}
\scalebox{0.13}{\includegraphics{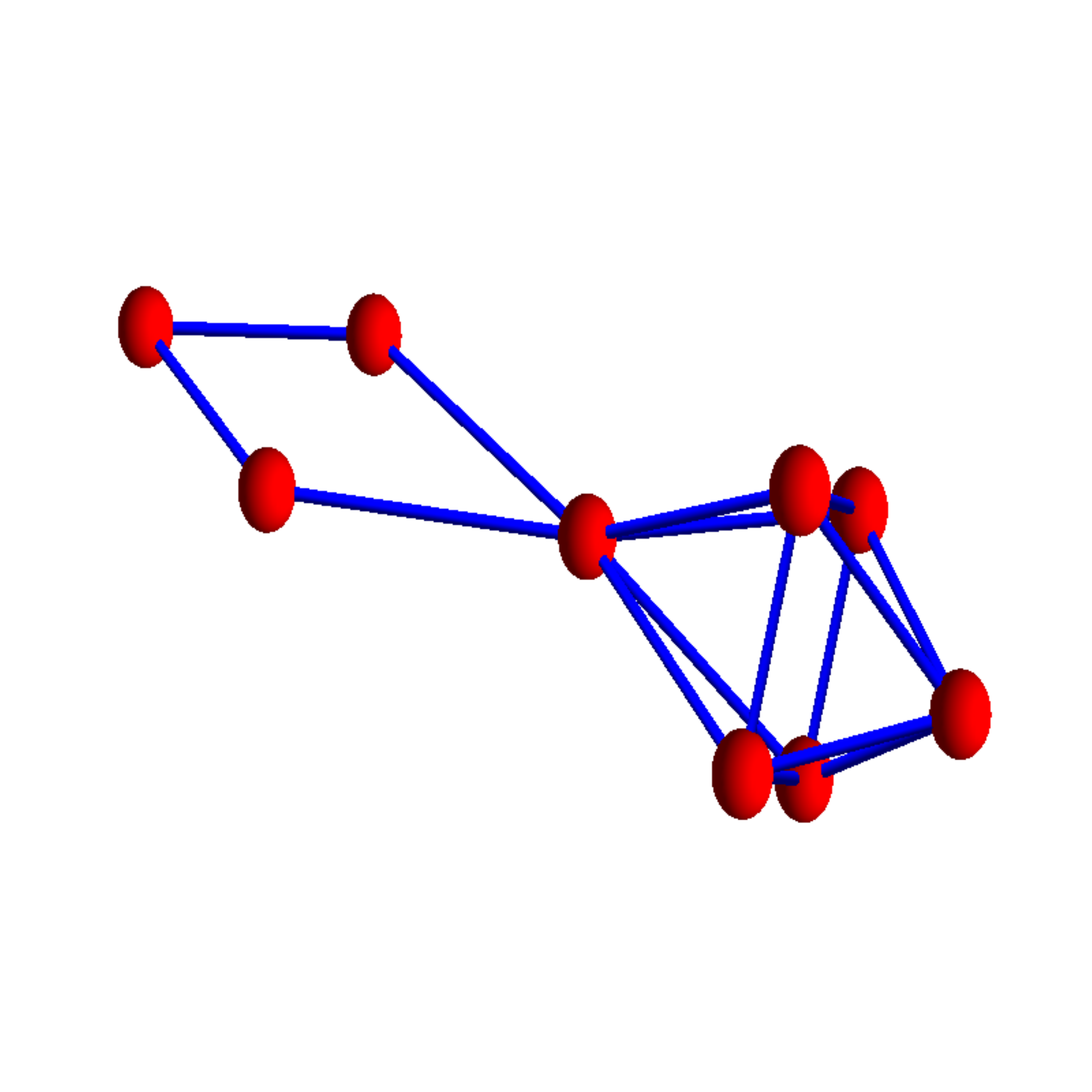}}
\scalebox{0.13}{\includegraphics{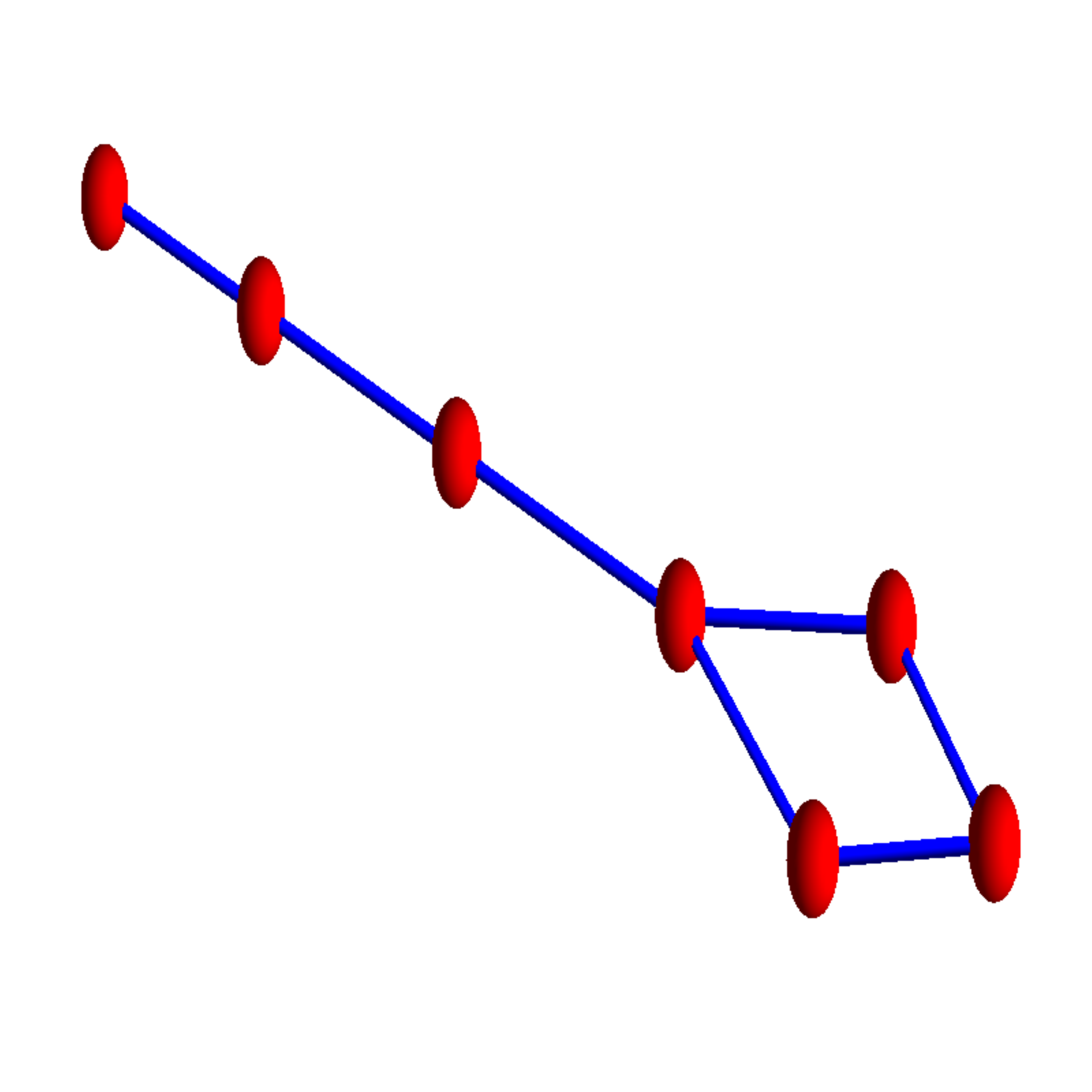}}
\scalebox{0.13}{\includegraphics{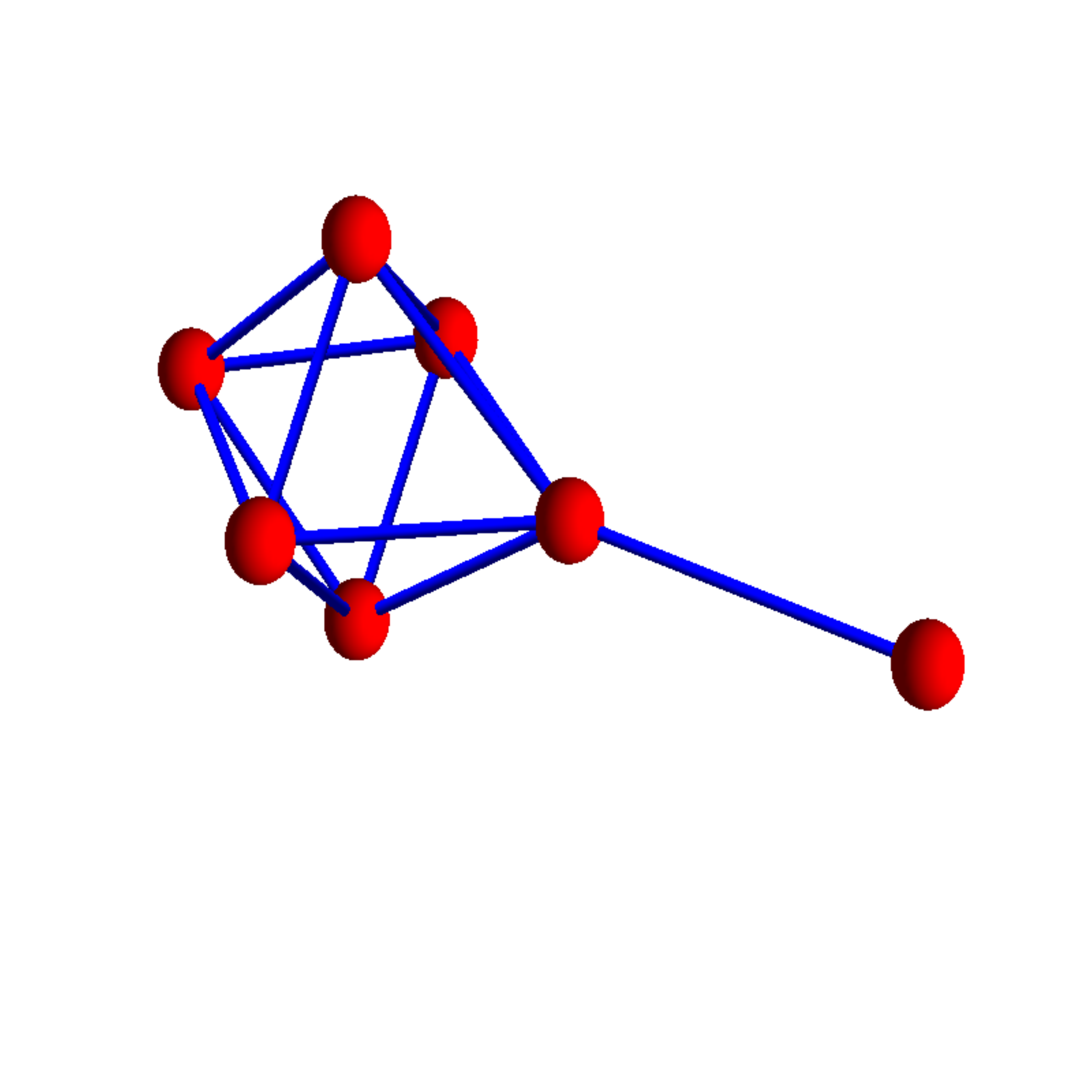}}
\caption{
The left graph is a small example of a graph with a critical point with $i_f(x)=0$.
When adding the vertex $x$, the Euler characteristic of the graph does not change; 
but the category changes from $\cat(H)=3$ to $\cat(G)=1$. 
The middle graph is an example of a critical point for which the category index 
$k_f(x)$ is zero. The right graph provides an example with a critical point for which
both the category index as well as the Poincar\'e-Hopf index are zero. 
Without the link vertex, the category is 2 because there are two contractible 
components, with the link the category is 2 because the graph is homotopic to a 
sphere graph. The Euler characteristic is also 2 in both cases. 
The Betti vector changes from $\vec{b}=(b_0,b_1,b_2)=(2,1,1)$ to $\vec{b}=(1,0,1)$
so that a Morse index would not be defined. 
\label{example1}
}
\end{figure}

\begin{defn}
Call $\crit(G)$ the {\bf minimal number of critical points} which an injective
function $f$ can have on $G$.
\end{defn}

{\bf Remarks.} \\
{\bf 1)} $\crit(G)$ is a simple homotopy invariant because a single homotopy step does not increase the
number of critical points. By reversing it, it also does not decrease the minimal number
of critical points. However, as the dunce hat shows, the minimal number of critical points can 
change if we allow both contractions and expansion. We have to move to $\cri(G)$ as the minimum over
all $\crit(H)$ homotopic to $G$ to get a homotopy invariant. \\
{\bf 2)} One could look at a smaller class of critical points which have the property that $S_f^-(x)$ 
is not ``contractible in $G$'' or empty. This is not a good notion however
because $\gcrit(G)$ is just the number of connected components of $G$.
The reason is that $S^-(x)$ is a subset of $B(x)$ which is contractible in itself
so that $S^-(x)$ is contractible in $B(x)$ and so in $G$. Especially, using the notion ``contractible in $G$''
to define critical points does not lead to a Lusterik-Schnirelmann theorem. Similarly,
using ``contractibility in itself'' instead of 
``contractibility in $G$'' in the definition of the 
category cover would not lead to a theorem. \\

{\bf Examples.} \\
{\bf 1)} A contractible set has $\crit(G)=1$ by definition and so $\cri(G)=1$. \\
{\bf 2)} A discretization of a sphere has $\crit(G)=2$ and $\cri(G)=1$. A discrete Reeb theorem assures
that $\crit(G)=2$ implies that $G$ is homotopic to a discretization of a $k$ dimensional
sphere. Category can see the dimension of the sphere in that the dimension is by $1$
larger than the dimension of $U_1 \cap U_2$ whenever $U_1,U_2$ is a category cover of $G$.
For a cyclic graph for example, $U_1 \cap U_2$ is a discrete 2 point graph which has 
dimension $0$. For a polytop of $\chi(G)=2$ and triangular faces, any category cover $U_1,U_2$
has the property that $U_1 \cap U_2$ is homotopic to a cyclic graph. The dimension is $2$. \\
{\bf 3)} On a discrete two dimensional torus, $\crit(G) = 3$ in general.  \\

\begin{defn}
Call a function $f$ a {\bf Morse function} on $G$ if any vertex is either a regular point
or a critical point of index $1$ or $-1$ and only one cohomology group $H_m(x)$ changes dimension.
The vertex has then the Morse index $m$.  
\end{defn}

This corresponds to the continuum situation, where
at regular points, the half sphere $S^-(x) = S_r(x) \cap \{ f \leq 0 \; \}$ is contractible
and where at critical points, $S^-(x)$  has either Euler characteristic $2$ or $0$ 
corresponding to index $-1$ or $1$.  It follows that like in the continuum, $i_f(x) = (-1)^{m(x)}$
and one has then also the same {\bf strong Morse inequalities} as in the continuum

\begin{propo}
\label{strongmorse}
$$ b_k(G) \leq c_k-c_{k-1}+\cdots \pm c_0, \chi(G) = c_0-c_1+\cdots \; ,  $$
where $c_m$ is the number of critical points of Morse index $m$. 
\end{propo}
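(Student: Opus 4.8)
The plan is to run the classical Morse-theoretic argument along the filtration supplied by $f$. Order the vertices so that $f(x_1)<\cdots<f(x_n)$ and consider the sublevel graphs $G_f(x_1)\subset G_f(x_2)\subset\cdots\subset G_f(x_n)=G$, where each $G_f(x_{j+1})$ is obtained from $G_f(x_j)$ by the pyramid extension over $S_f^-(x_{j+1})$. First I would dispose of the regular points: if $x_{j+1}$ is regular then $S_f^-(x_{j+1})$ is contractible in itself, so the passage $G_f(x_j)\to G_f(x_{j+1})$ is a homotopy step and, since cohomology is a homotopy invariant, all Betti numbers are unchanged. Thus the Poincar\'e polynomial $P_j(t)=\sum_k b_k(G_f(x_j))\,t^k$ moves only when we cross a critical point, and there are exactly $c_m$ critical points of Morse index $m$.

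Next I would analyze a single critical point $x=x_{j+1}$ of Morse index $m$. By the definition of a Morse function, crossing $x$ changes the dimension of exactly one cohomology group, namely $H^m$, and by exactly one, while $i_f(x)=(-1)^m$. The key claim is the birth/death dichotomy: either a new $m$-dimensional class is born, so $P_j(t)=P_{j-1}(t)+t^m$, or an existing $(m-1)$-dimensional class dies, so $P_j(t)=P_{j-1}(t)-t^{m-1}$. To see this I would use the long exact cohomology sequence of the pair $(G_f(x_{j+1}),G_f(x_j))$, observing that the Morse hypothesis forces the relative cohomology to be one-dimensional and concentrated in degree $m$, the discrete analogue of attaching a single $m$-cell. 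The connecting homomorphism is then either zero, in which case the new generator survives and $b_m$ increases by one, or injective onto its one-dimensional target, in which case an old class in degree $m-1$ is annihilated and $b_{m-1}$ decreases by one.

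I would then finish with the standard bookkeeping. Writing $M(t)=\sum_m c_m t^m$ for the Morse polynomial, the two cases above show that at each critical point the difference $M_j(t)-P_j(t)$ increases either by $t^m-t^m=0$ (a birth) or by $t^m-(-t^{m-1})=t^{m-1}(1+t)$ (a death), and is unchanged at regular points. Summing over all vertices gives $M(t)-P(t)=(1+t)Q(t)$ with $Q(t)=\sum_j q_j t^j$ a polynomial with nonnegative integer coefficients (one monomial $t^{m-1}$ per index-$m$ death). Comparing coefficients of $t^k$ in $M-P=(1+t)Q$ gives $c_k-b_k=q_k+q_{k-1}$, and the alternating partial sum telescopes to $(c_k-c_{k-1}+\cdots\pm c_0)-(b_k-b_{k-1}+\cdots\pm b_0)=q_k\ge 0$, which is the strong Morse inequality. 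Setting $t=-1$ kills the factor $(1+t)$, so $M(-1)=P(-1)$, that is $\sum_m(-1)^m c_m=\sum_k(-1)^k b_k=\chi(G)$ by the Euler--Poincar\'e formula.

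The hard part will be step two, the birth/death dichotomy, because it presupposes a working relative-cohomology formalism for a subgraph inclusion in the discrete $(\Omega,\wedge,d)$ setup: I would need a long exact sequence of a pair (or an equivalent Mayer--Vietoris statement lifting Lemma 1 from Euler characteristics to the cohomology level) together with a proof that the relative term for a single pyramid extension over the handle $S_f^-(x)$ is concentrated in one degree. The Morse-function hypothesis that only $H_m$ changes, and by one, is precisely the condition making the relative cohomology rank one in degree $m$, so once the relative sequence is in place the dichotomy is immediate; setting that sequence up carefully is the real work.
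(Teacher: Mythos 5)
Your argument is the classical one from Milnor, Section I.5 (filtration by sublevel graphs, regular points preserve cohomology, rank-one relative cohomology concentrated in degree $m$ at each critical point, Morse-polynomial bookkeeping with the factor $(1+t)$, and $t=-1$ recovering the Euler characteristic via Poincar\'e--Hopf), which is exactly what the paper does: its entire proof is the citation ``the proof in Milnor Section I.5 works,'' plus Poincar\'e--Hopf for the second identity. The ingredient you rightly flag as the real work --- a long exact sequence of a pair for the discrete $(\Omega,d)$ complex together with the concentration of the relative term in a single degree for one pyramid extension --- is precisely what the paper leaves buried in that citation, so your version is, if anything, the more honest account of the same route.
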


\begin{proof}
The proof in \cite{Mil63} section I \$5 works.  
The second equality is a consequence of Poincar\'e-Hopf since the right hand side adds up the indices.
%The function $S_k(G,H) = b_k(G,H) - b_{k-1}(G,H) + b_{k-2}(G,H) + \pm \cdots \pm b_0(G,H)$
%is subadditive. The proof is the same as in \cite{Miltnor63} on page 30.
%Given an exact sequence $\to_h A \to_i B \to_j C \to k \to D \to 0$ of vector spaces,
%the rank of $h$ plus the rank of $i$ is equal to the dimension of $A$.
\end{proof}

{\bf Remarks.} \\
{\bf 1)} These inequalities do not hold for a general finite simple graph. See Figure~\ref{example1}
for an example of a critical point with index $0$. \\
{\bf 2)} Geometric graphs like graphs obtained by triangulating a 
manifold admit Morse functions but we do not know yet what the most general class of graphs is
which admit a Morse function. \\
{\bf 3)} Geometric graphs for which every unit sphere is a triangularization 
of a Euclidean sphere have Morse functions.  \\

Critical points for $f$ by definition are vertices $a$, where $G(a)$ changes the number of 
critical points $\crit(G(a))$. There are other points: 
category transition values for $f$ are the values, 
where $G(a)$ changes category and characteristic transition values for $f$
are the values, where $G(a)$ changes Euler characteristic.

\begin{figure}
\scalebox{0.35}{\includegraphics{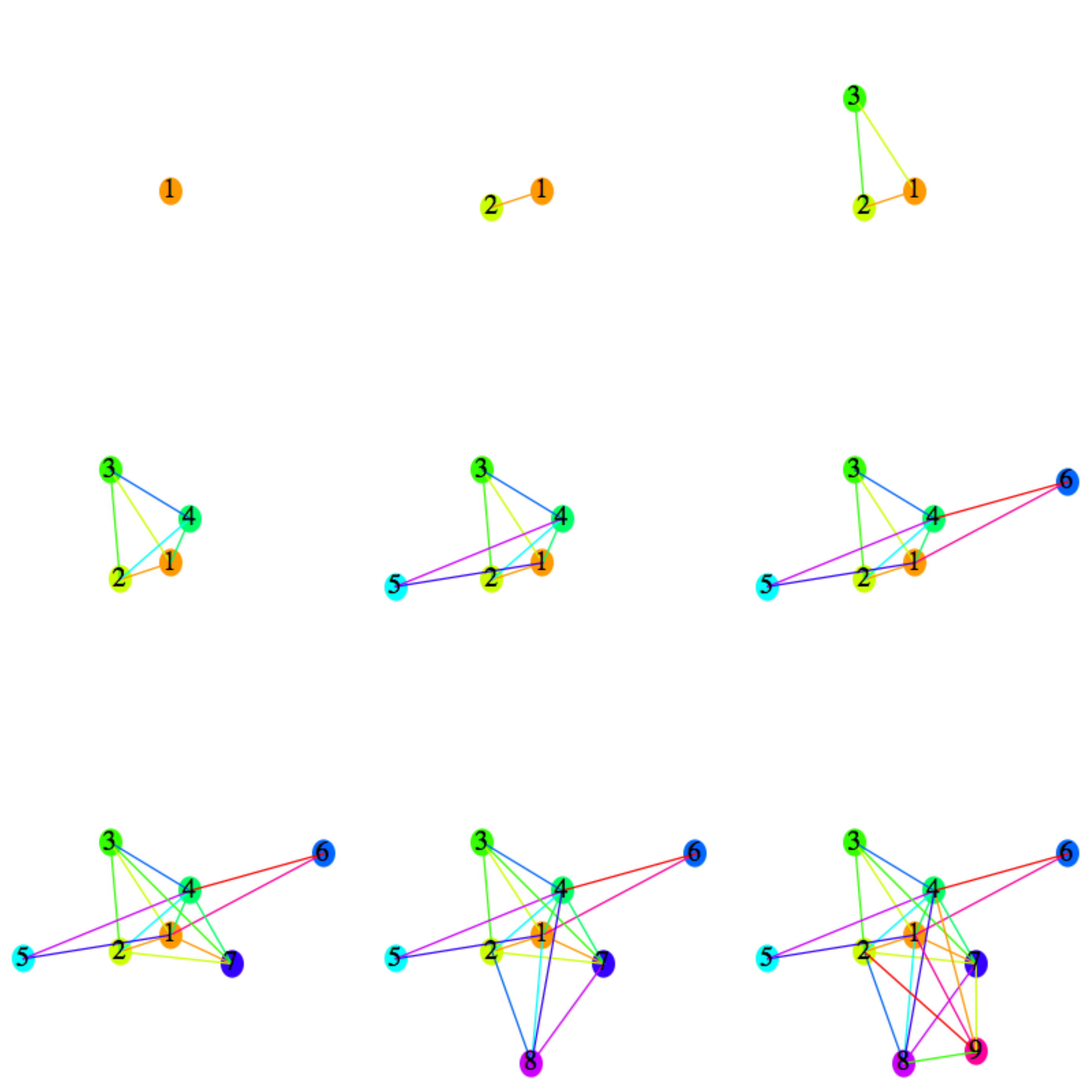}}
\caption{
Ivashkenko homotopy steps define 
a sequence of graphs $G_f(x_j)$ can be seen as a gradient flow
with respect to some function $f$. Time is discrete. Every injective
function $f$ on the vertex set on a simple graph defines such a 
sequence of graphs. The final graph is contractible. Reversing the steps
produces a homotopy to the one point graph. This special homotopy is a 
contraction. 
\label{randomgrowth}
}
\end{figure}

\begin{lemma}
Given a finite simple graph $G$ of topological category
$\tcat(G)=n$ and an in itself contractible subgraph $H$ of $G$. Then there
exists an other category cover $\{ W_j \; \}_{j=1}^n$ of $G$ such that each $W_j \cap H$
is either empty or equal to $H$. The number of contractible elements in the cover remains
the same.
\label{pushing}
\end{lemma}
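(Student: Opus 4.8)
The plan is to start from any category cover $\{U_j\}_{j=1}^n$ of $G$ by subgraphs contractible in $G$ and to modify it by induction on the number $|V(H)|$ of vertices of $H$. When $H$ is a single vertex there is nothing to do, since every $U_j$ meets $H$ either in that vertex (all of $H$) or not at all. For the inductive step I would use that $H$ is contractible in itself: fixing an injective $f$ on $V(H)$ with a single critical point, its maximum $x$ is a vertex whose sphere $S(x)\cap H$ inside $H$ is contractible, so $H_0 := H \setminus \{x\}$ is again contractible in itself and smaller. Applying the inductive hypothesis to $H_0$ produces a category cover $\{V_j\}_{j=1}^n$ in which every $V_j \cap H_0$ is empty or all of $H_0$.

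It then remains to absorb the single vertex $x$. Let $A$ be the set of indices with $V_j \supseteq H_0$ and $B$ the set with $V_j \cap H_0 = \emptyset$; note that every edge of $H$ joining $x$ to $S(x)\cap H \subseteq H_0$ lies in some element of $A$. I would single out one index $j_0 \in A$, set $W_{j_0} = V_{j_0} \cup \{x\}$ so that $W_{j_0} \supseteq H$, and then strip the remaining vertices of $H$ out of all other elements, $W_j = V_j \setminus V(H)$ for $j \neq j_0$, so that $W_j \cap H = \emptyset$. By construction each $W_j \cap H$ is empty or equal to $H$, and the number of elements stays $n$.

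The delicate point, which I would treat as the core of the proof, is that the resulting family is still a category cover. Three things must be checked. First, coverage: edges inside $V(H)$ are caught by $W_{j_0}$, but an edge joining a vertex of $H$ to a vertex outside $H$ which previously sat only in some stripped $V_j$ must be re-covered; here I would use that each $V_j$ is contractible in $G$, so that an ambient contraction of $G$ is available to redistribute such boundary edges into $W_{j_0}$ or into a $B$-element. Second, that $W_{j_0} = V_{j_0} \cup \{x\}$ is contractible in $G$: since $S(x)\cap H$ is contractible and $V_{j_0} \supseteq H_0$, I would show $S(x) \cap W_{j_0}$ is contractible, making the addition of $x$ a pyramid-type homotopy step and hence $W_{j_0}$ contractible in $G$. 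Third, that each stripped $W_j = V_j \setminus V(H)$ is still contractible in $G$: I would remove the vertices of $H$ in the reverse of the collapse order of $H$, arguing that each single removal is a legal reduction because the relevant sphere stays contractible, and invoking the induced-deformation mechanism of the previous section to carry the contraction of $V_j$ through. Since every such modification is a homotopy step, an element that was contractible in itself remains so, which keeps the number of in-itself-contractible members of the cover unchanged.

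The main obstacle is exactly this contractibility-and-coverage bookkeeping: stripping $H$ out of the non-designated elements can a priori both destroy their contractibility and uncover boundary edges of $H$, while enlarging $W_{j_0}$ by $x$ can fail to stay contractible if $x$ has neighbours in $V_{j_0}$ outside $H$ whose joint sphere is not contractible. The place where real work is needed is therefore the reliance on ``contractible in $G$'' rather than ``contractible in itself'': the ambient contractions of $G$ witnessing the contractibility of the $V_j$ are what give the freedom both to move boundary edges to an element that keeps them covered and to turn the addition and the removals of $H$-vertices into legitimate homotopy steps. I expect that orchestrating a single contraction of $G$ that simultaneously realizes these additions and removals, compatibly with the inductively obtained cover for $H_0$, is the crux of the argument.
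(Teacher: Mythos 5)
Your plan follows essentially the same route as the paper: designate one cover element to absorb all of $H$ (built up along the collapse order of $H$, which your induction on $|V(H)|$ makes explicit) and strip the $H$-vertices out of every other element, then argue that contractibility in $G$ and coverage survive. The three delicate points you isolate --- re-covering boundary edges, contractibility of the enlarged element after adding a vertex whose neighbours may reach outside $H$, and contractibility of the stripped elements --- are precisely the assertions the paper's own proof states without further justification, so your sketch matches the published argument in both structure and level of detail.
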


\begin{proof}
Given a category cover $\{ U_j \; \}_{j=1}^n$ for which
$U_k$ intersects $H$. Expand $U_k$ within $G$
so that it contains the entire graph $H$. This new subgraph $V_k$
is still contractible within $G$.
Now remove contractible components of $U_j \cap H$ from
the other $U_j, j \neq k$ until each of the graphs $W_j, j \neq k$
has no intersection with $H$ anymore. Theses changes
keep $U_j$ contractible within $G$ or contractible in itself. Each change can be
done also so that the complement of $H$ remains covered.
Now $V_k \cap H = H$ is contractible in $G$ and $W_j \cap H$ is empty for $j \neq k$.
\end{proof}

\begin{thm}
$\tcat(G) \leq \crit(G)$. 
\end{thm}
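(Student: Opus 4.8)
\medskip
\noindent\textbf{The plan.} I would run the gradient filtration of the introduction. Order the vertices so that $f(x_1)<\cdots<f(x_n)$ and let $G_k$ be the subgraph of $G$ generated by $\{x_1,\dots,x_k\}$, so that $G_0=\emptyset$, $G_n=G$, and the passage $G_{k-1}\to G_k$ adjoins the single vertex $x_k$ together with the edges joining it to $H_k:=S^-_f(x_k)$, its neighbours of smaller $f$-value. The goal is to prove by induction on $k$ that $\tcat(G_k)$ is at most the number of critical points of $f$ among $x_1,\dots,x_k$. The global minimum $x_1$ is always critical and $\tcat(G_1)=1$, which anchors the induction; the inductive step rests on two claims, namely (A) if $x_k$ is regular then $\tcat(G_k)\le\tcat(G_{k-1})$, and (B) for an arbitrary $x_k$ one has $\tcat(G_k)\le\tcat(G_{k-1})+1$. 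Granting these, summing telescopically gives $\tcat(G)=\tcat(G_n)\le \#\{\text{critical points of }f\}$, and since $\tcat(G)$ does not depend on $f$, choosing an $f$ that realizes the minimum $\crit(G)$ yields the theorem.

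\medskip
\noindent\textbf{The regular step, which I regard as the heart.} Here $H_k$ is contractible in itself, so I would invoke Lemma~\ref{pushing}: from a minimal category cover of $G_{k-1}$ with $m=\tcat(G_{k-1})$ elements it produces a category cover $\{W_j\}_{j=1}^m$ in which every $W_j\cap H_k$ is either empty or all of $H_k$. Choosing $j_0$ with $H_k\subseteq W_{j_0}$, I replace $W_{j_0}$ by the subgraph $W_{j_0}^+$ generated by $W_{j_0}\cup\{x_k\}$ and keep the other $W_j$. This is a cover of $G_k$, since the only new edges, those from $x_k$ to $H_k$, lie in $W_{j_0}^+$. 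The decisive point is that, because $x_k$ is regular, its unit sphere in $G_k$ is exactly the contractible graph $H_k$, so $x_k$ is a removable vertex of $G_k$ and its deletion is a legitimate first contraction step $G_k\to G_{k-1}$. Under this step the induced deformation of $W_{j_0}^+$ simply drops $x_k$ and returns $W_{j_0}$, after which the witness of ``$W_{j_0}$ contractible in $G_{k-1}$'' finishes inside $G_{k-1}$; each remaining $W_j$ avoids $x_k$, so the first step leaves it untouched and its old witness again applies. Thus every element is contractible in $G_k$ and $\tcat(G_k)\le m$.

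\medskip
\noindent\textbf{The generic step and the expected obstacle.} For an arbitrary $x_k$ the unit ball $B^-(x_k)$ generated by $\{x_k\}\cup H_k$ is a cone over $H_k$, hence contractible in itself (peel off $x_k$ as a minimum), and adjoining it to a minimal cover of $G_{k-1}$ covers all of $G_k$, including the edges at $x_k$. What remains is to check that the inherited elements $U_j\subseteq G_{k-1}$ stay contractible in $G_k$, and this I expect to be the real difficulty. The property ``contractible in $G$'' is \emph{not} monotone under enlarging the ambient graph: the equator $C_4$ is contractible in the wheel $W_4$ (a cone over it) yet ceases to be so once a second apex is added to form the octahedron, and that apex is exactly a critical vertex. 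At a critical $x_k$ the remove-$x_k$-first device of the regular step is unavailable, since $H_k$ is no longer contractible, so one cannot reuse the old cover verbatim.

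\medskip
\noindent The way I would try to rescue claim (B) is to re-choose the cover of $G_{k-1}$ rather than inherit it: absorb into the cone $B^-(x_k)$ every element that meets $H_k$, and arrange, in the spirit of Lemma~\ref{pushing}, that the elements which are only contractible in the ambient sense (and not in themselves) are pushed clear of $H_k$, so that the vertex removals witnessing their contractibility never touch a neighbour of $x_k$ and therefore remain valid in $G_k$. Making this rearrangement precise, in particular guaranteeing that no witnessing contraction is obstructed by the new top vertex, is the technical crux; once it is in place, claim (B), the inductive bound, and hence $\tcat(G)\le\crit(G)$ all follow.
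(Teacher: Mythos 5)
Your plan is the paper's plan: run the gradient filtration $G_1\subset\cdots\subset G_n=G$, use Lemma~\ref{pushing} to normalize the cover relative to $H_k=S^-_f(x_k)$, show the category does not increase at a regular vertex, and bound the increase at a critical vertex by $1$. Your treatment of the regular step is in fact \emph{more} careful than the paper's: the paper simply asserts that the pyramid extension $V_k=U_k\cup[(U_k\cap H)\cup\{x\}]$ ``remains contractible in $G$ or contractible in itself,'' whereas you supply the actual witness, namely that a regular $x_k$ has contractible unit sphere $H_k$ in $G_k$, so deleting $x_k$ is a legal first contraction step of $G_k$ that returns everything to $G_{k-1}$, where the old witnesses apply. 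That argument is correct and closes the regular case.

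The one place you stop short is claim (B), the bound $\tcat(G_k)\le\tcat(G_{k-1})+1$ at a critical vertex, and you are right that this is where the real difficulty sits: adjoining the cone $B^-(x_k)$ covers the new vertex and edges, but an element $U_j$ of the old cover that was contractible in $G_{k-1}$ only by virtue of an ambient contraction of $G_{k-1}$ need not remain contractible in $G_k$, because the vertex removals witnessing that contraction may pass through neighbours of $x_k$ whose unit spheres have changed; your wheel-to-octahedron example is exactly the paper's own Remark~1 after the corollary. You should know that the paper does not close this gap either --- its proof consists of the single sentence ``Note that adding a point can increase the category only by $1$,'' with no justification, so your proposal halts precisely where the published argument is thinnest. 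A natural repair, which neither you nor the paper carries out, would be to strengthen the induction hypothesis so that the maintained cover consists of sets contractible \emph{in themselves} (a property that, unlike contractibility in $G$, is stable under enlarging the ambient graph); but one would then have to check that the rearrangements in Lemma~\ref{pushing} preserve in-itself contractibility of the modified elements, which is not automatic. As it stands, your proposal is an honest reconstruction of the paper's proof together with a correct diagnosis of its missing step, not a complete proof.
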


\begin{proof}
To show that a category change of $G(x) \to G(x_{j})$ implies that $x=x_j$ is a critical point,
we show that if we add a regular point $x$, then the category does not change. Note that adding
a point can increase the category only by $1$ so that the number of critical points will be
an upper bound on the category.
We have to check that if $H$ is a contractible subgraph and a pyramid
construction $G'$ is done over $H$ with a vertex $x$, 
then the category of $G$ is the same as the one of $G'$. But the category cover
directly expands too: assume $\{U_j \; \}$ is a category cover of $G$. This means
that $U_j$ is contractible in $G$ or contractible in itself.  By the lemma, we can 
assume that one $U_k$ has the property that $U_k \cap H = H$ and $U_j \cap H = \emptyset$
for $j \neq k$. The pyramid extensions $V_k = U_k \cup [(U_k  \cap H)\cup \{x\}]$ remains
contractible in $G$ or contractible in itself: if $U_k$ was contractible in 
$G$, then $V_k$ is contractible in $G$. If $U_k$ was contractible in itself, then 
$V_k$ is contractible in itself. 
\end{proof}

\begin{coro}
$\cat(G) \leq \cri(G)$. 
\end{coro}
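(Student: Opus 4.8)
The plan is to obtain this inequality as a direct formal consequence of the theorem $\tcat(G) \leq \crit(G)$, simply by passing from the pointwise bound on individual graphs to the two minima that define the homotopy invariants $\cat$ and $\cri$. Recall that $\cat(G)$ is the minimum of $\tcat(H)$ and $\cri(G)$ the minimum of $\crit(H)$, in both cases as $H$ ranges over all graphs homotopic to $G$; since homotopy is an equivalence relation, these two minima are taken over exactly the same class of graphs.

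First I would record that $\tcat(H) \leq \crit(H)$ holds for \emph{every} graph $H$ in this class, which is precisely the content of the theorem. To convert this into a statement about the minima, I would pick a representative $H^*$ homotopic to $G$ that attains $\crit(H^*) = \cri(G)$; such a minimizer exists because $\crit(H) \geq 1$ for all $H$ (a global minimum of any injective $f$ is always a critical point), so we are minimizing a positive-integer-valued quantity and the infimum is attained by well-ordering. Applying the theorem to this single graph gives $\tcat(H^*) \leq \crit(H^*) = \cri(G)$. Finally, since $H^*$ is itself homotopic to $G$, its value $\tcat(H^*)$ is one of the numbers over which the minimum defining $\cat(G)$ is taken, so $\cat(G) \leq \tcat(H^*)$, and chaining the two inequalities yields $\cat(G) \leq \cri(G)$.

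I do not expect a genuine obstacle: the corollary is purely formal once the theorem is in hand. The only point deserving a word of care is the interchange of the inequality with the minima, and even the selection of a minimizer can be avoided by the general fact that a termwise inequality $\tcat(H) \leq \crit(H)$ over a common index set forces $\min_H \tcat(H) \leq \min_H \crit(H)$. The substantive work has already been done in establishing $\tcat(G) \leq \crit(G)$ for arbitrary finite simple graphs and in checking, through the homotopy invariance built into the definitions of $\cat$ and $\cri$, that both minima range over the same homotopy class.
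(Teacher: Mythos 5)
Your proposal is correct and is essentially the paper's own argument: both deduce the corollary purely formally from the theorem $\tcat(H)\leq\crit(H)$ applied across the homotopy class of $G$, using that $\cat$ and $\cri$ are defined as minima over that same class. The only cosmetic difference is that you evaluate at a minimizer of $\crit$ while the paper invokes homotopy invariance of $\cat$ for every representative $H$ before taking the minimum.
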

\begin{proof}
Since by definition $\cat(G) \leq \tcat(G)$ we have 
$\cat(G) \leq \crit(G)$. Since the left hand side is a homotopy invariant
we have $\cat(G)=\cat(H) \leq \crit(H)$ for any graph $H$ homotopic to $G$. 
Therefore $\cat(G) \leq \cri(G)$. 
\end{proof}

{\bf Remarks.} \\
{\bf 1)} It can happen that a member $U_j$ of the category cover is contractible in $G$ but
does not remain so after expanding $G$. An example is the wheel graph $W_4$ which is contractible
and contractible in $G$. If we make a pyramid extension over the boundary $S_4$, we get an octahedron $G'$. 
Of course this expansion has added the second critical point.
Now $W_4$ is a subgraph of $G'$ but it is no more contractible in $G'$ because there is no contraction
of $G'$ any more. But it remains contractible in itself. \\
{\bf 2)} A historical remark from the introduction in \cite{RudyakSchlenk}:
the Lusternik-Schnirelmann theorem is contained in the fundamental work \cite{LS} evenso Lusternik and Schnirelmann
worked with the minimal number of {\bf closed} sets. It was \cite{Fox} who showed it for open 
covers. The theorem was extended by Palais to functions $f$ on Banach manifolds satisfying a Palais-Smale
condition and then to continuous functions on some metric spaces as well as flows
$\phi_s$ on topological spaces $X$ satisfying $f(\phi_t(x))< f(\phi_s(x))$ for some continuous
function $f$, for $t>s$ and $x$ which are not fixed points. 
The number of rest points of $\phi_t$ is then at least $\cat(X)$. Since the critical points of
$f$ are rest points of the gradient flow $\phi_s$, this implies the estimate on the number of
critical points. This is the starting point of \cite{RudyakSchlenk} to estimate the number of
fixed points for discrete maps, a framework which could lead to future fixed point applications
for graph endomorphisms which are gradient like. For a recent fixed point theorem, see 
\cite{brouwergraph}. \\

We can now verify that the number of contractible elements in the cover
does not increase. In other words:

\begin{coro}
$\gcat(G)-\tcat(G)$ does not increase when expanding $G$ along simple homotopy steps. 
\end{coro}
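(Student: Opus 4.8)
The plan is to reduce to a single expansion and isolate the one inequality that really needs work. A simple homotopy expansion is a finite sequence of elementary steps, so by induction it suffices to treat one step $G\to G'$, where $G'$ is the pyramid extension of $G$ over an in‑itself‑contractible subgraph $H$ by a new vertex $x$ with $S_{G'}(x)=H$. By the proof of the theorem $\tcat(G)\le\crit(G)$, adding such a regular vertex leaves the topological category unchanged, so $\tcat(G')=\tcat(G)$. Hence $\gcat(G')-\tcat(G')\le\gcat(G)-\tcat(G)$ is equivalent to the single assertion $\gcat(G')\le\gcat(G)$, and the whole plan is to establish the latter.

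The engine is the observation that a pyramid extension of an in‑itself‑contractible graph over an in‑itself‑contractible subgraph is again in‑itself contractible. Concretely, if $U$ is in‑itself contractible with an injective $g\colon V(U)\to\R$ having only its minimum as a critical point, and $H\subseteq U$ is in‑itself contractible, then extend $g$ to $U\cup\{x\}$ by giving $x$ a value above all of $g$; now $S^-(x)=S(x)=H$ is in‑itself contractible, so $x$ is regular, every other $S^-(y)$ is unchanged, and $U\cup\{x\}$ is in‑itself contractible. Consequently, if $\{G_j\}_{j=1}^m$ is a cover of $G$ by in‑itself‑contractible subgraphs with $m=\gcat(G)$ in which one element $G_k$ already contains all of $H$, then replacing $G_k$ by $G_k\cup\{x\}$ and leaving the rest untouched covers $G'$ by $m$ in‑itself‑contractible subgraphs: the new vertex lies in $G_k\cup\{x\}$ and every edge from $x$ to $h\in H\subseteq G_k$ is present there. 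This gives $\gcat(G')\le m=\gcat(G)$.

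The step that must be supplied, and which I expect to be the main obstacle, is that we may always arrange for $H$ to sit inside a single element of an optimal geometric cover. This is the in‑itself analogue of the pushing Lemma~\ref{pushing}, but that proof enlarges the chosen element \emph{within $G$}, which only preserves contractibility in $G$; for $\gcat$ we must keep every element in‑itself contractible throughout. The plan is to grow one element $G_k$ meeting $H$ by absorbing the vertices of $H$ one at a time along an in‑itself‑contraction order $h_1,\dots,h_r$ of $H$, while deleting the just‑absorbed contractible pieces from the other $G_j$ so that coverage is maintained and each $G_j$ stays in‑itself contractible. The delicate point is that when $h_i$ is absorbed, its downward sphere inside the growing element is $S^-_H(h_i)$ together with whatever previously absorbed vertices \emph{outside} $H$ happen to be adjacent to $h_i$; one must choose the element to grow and the bookkeeping of deletions so that this combined sphere remains contractible. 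Controlling these extra adjacencies — which simply do not arise in the in‑$G$ pushing argument, where enlargement is unconstrained — is the crux, and the local lemma above is exactly the tool that keeps the growing element a genuine iterated pyramid extension at each stage.

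Once the geometric pushing is in hand, combining it with the extension of the second paragraph yields $\gcat(G')\le\gcat(G)$, and together with $\tcat(G')=\tcat(G)$ this gives $\gcat(G')-\tcat(G')\le\gcat(G)-\tcat(G)$; iterating over the steps of the expansion finishes the proof. It is worth noting that the statement is one‑sided by necessity: the dunce hat shows that a \emph{contraction} can raise the defect, since a collapse may get stuck while $\tcat$ changes, so ``does not increase'' under expansions is the best one can hope for.
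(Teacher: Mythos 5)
Your reduction to a single pyramid extension and your local lemma (a pyramid extension of an in-itself-contractible graph over an in-itself-contractible subgraph, placing the new vertex above everything, is again in-itself contractible) are both sound and consistent with the paper's machinery. But your route commits you to proving $\gcat(G')\le\gcat(G)$, and for that you need a pushing statement for \emph{geometric} covers: that an optimal cover of $G$ by in-itself-contractible subgraphs can be rearranged so that $H$ lies inside a single element while every element stays in-itself contractible. You correctly identify this as the crux and then leave it unproven, so the proposal is a plan rather than a proof. The difficulty is genuine: enlarging an in-itself-contractible $U_k$ to absorb $H$ can destroy in-itself contractibility (each absorbed vertex acquires a downward sphere that mixes $S^-_H(h_i)$ with previously absorbed neighbours outside $H$, and nothing forces that union to be contractible), and trimming the other elements to maintain coverage can break them as well. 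Neither your sketch nor the paper supplies such a lemma.

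The paper sidesteps this entirely by never treating $\gcat$ as a separate quantity to be bounded. Its proof starts from a minimal \emph{category} cover (elements contractible in $G$), applies Lemma~\ref{pushing} --- whose enlargement step only has to preserve contractibility \emph{in $G$}, a far more forgiving condition, since one may expand $U_k$ freely inside $G$ --- to push $H$ into a single element $U_k$, and then notes that the pyramid extension $V_k=U_k\cup\{x\}$ is contractible in $G'$ and is in-itself contractible exactly when $U_k$ was. The corollary is then read off as bookkeeping on that one cover: the number of elements and the number of in-itself-contractible elements are both preserved under the expansion. The two approaches therefore diverge precisely at the pushing step, and yours demands a strictly stronger, unestablished version of it. To repair your argument you must either prove the geometric pushing lemma (controlling the extra adjacencies you flag) or retreat to the paper's weaker formulation, which tracks in-itself contractibility of the elements of a fixed minimal category cover rather than bounding $\gcat(G')$ outright.
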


\begin{proof}
By lemma~(\ref{pushing}), we can find a category cover of $G$ for which one
$H \cap U_k$ is contractible and the other $H \cap U_j$ are empty for all $j \neq k$. \\
If we make now a pyramid extension over $H$ with the point $x$ over $U_k$, we get a in $G$
contractible set $V_k$ which is contractible if $U_k$ was contractible.
The sets $\{U_j \; \}_{j \neq k}$ together with $V_k$ cover $G'$ and
each of the sets is contractible in $G'$. 
\end{proof}

{\bf Remarks.}  \\
{\bf 1)} If $\tcat(G)=1$, then $\crit(G)=1$ because contractible graphs admit by definition
a function $f$ which has only one critical point. The figure $8$ graph shows that
$\tcat(G)=2$ can be compatible with $\crit(G)=3$. The figure $8$ graph also satisfies 
$\cat(G)=2$ and $\cri(G)=3$. \\
{\bf 2)} At critical points, the topological category can both decrease and increase so that $\crit(G)$ 
can be larger than $\tcat(G)$ in general. The figure 8 is the
smallest example with $\tcat(G)<\crit(G)$. Larger chains show that 
$\crit(G)-\tcat(G)$ as well as $\cri(G)-\cat(G)$ can be arbitrary large. To see that the figure $8$ 
graph has $\crit(G)=3$, note that $\chi(G)=-1$ and that at the minimum of any function $i_f(x)=1$ and at a maximum
$i_f(x)=-1$ or $-3$. In order to add up by Poincar\'e Hopf, there must be an other critical point 
besides the minimum and maximum. Since this argument works for any homotopic graph, we have $\cri(G)=3$ and 
$\cat(G)=2$. \\
{\bf 3)} As in the continuum, the geometric category can be larger than the topological category as the example
of Fox has shown. The dunce hat shows that $\Cat(G)-\tcat(G)$ can decrease when expanding $G$.  \\

\begin{figure}
\scalebox{0.20}{\includegraphics{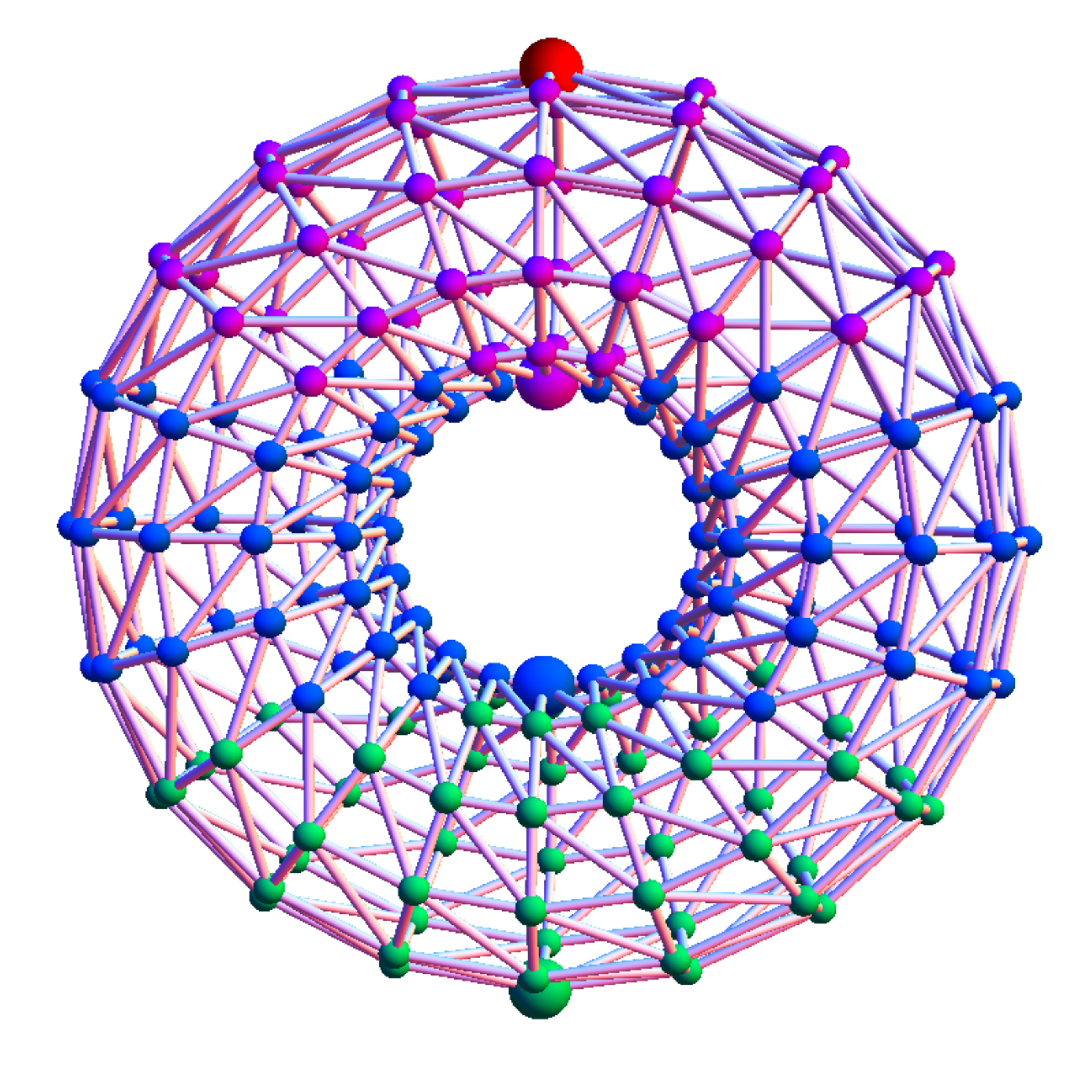}}
\scalebox{0.20}{\includegraphics{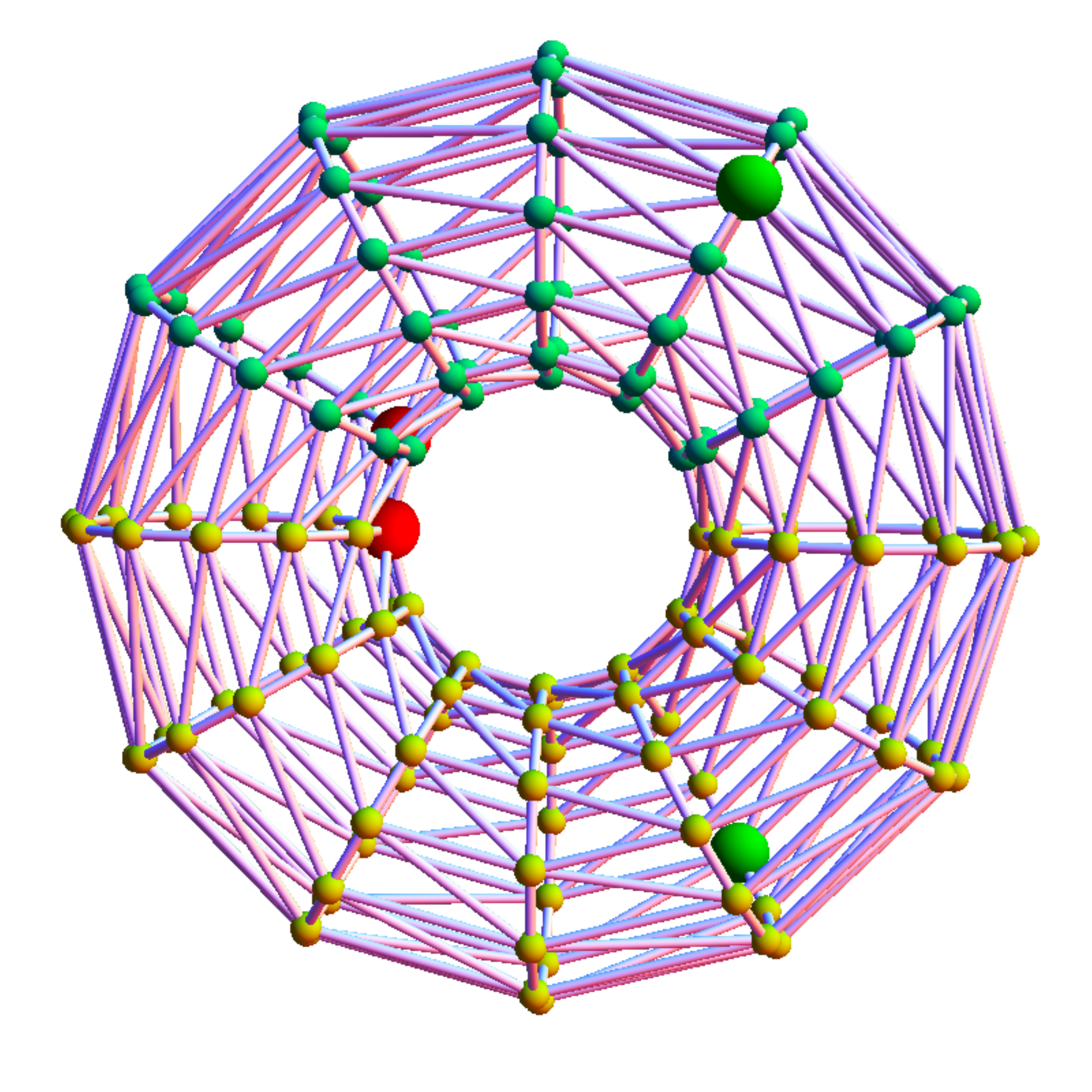}}
\caption{
A toral graph $G$. To the left we see a function $f$ corresponding to the height 
function from an embedding in space. It is a Morse function with 4 critical points,
two of index $1$ and two of index $-1$. 
The second function has $4$ critical points two of which are close together. If 
the two vertices are fused, we get a critical point with 
index $2$ and a function on the torus which has $3$ critical points. The function is
a discretization of $\sin(x) + \sin(y) + \sin(x+y)$ which is also no Morse function in 
the continuum. From $3=\ccup(G) \leq \cat(G) \leq \crit(G)=3$ follows that the category of the 
discrete two dimensional torus is $3$. Its easy to give a category cover. 
If $G=\{1,2,\dots,16 \;\}$ is a hexagonal lattice arranged on a square, then 
$H_1=\{2,3,4,6,7,8,10,11,12 \;\}$, $H_2=\{5,6,8,9,10,11,13,14,15 \;\}$, 
$H_3=\{11,12,9,15,16,13,3,4,1 \; \}$ is a category cover which also verifies $\Cat(G)=3$. 
\label{example2}
}
\end{figure}

{\bf Examples.}  \\
{\bf 1)} Attach a ring at a point x of a sphere. Take the point $x$ away and we need two sets to 
cover. With $x$ we need $3$ sets to cover. The Euler characteristic without the point is $2$ because
there are two components. With the point, it is still $1$ because $b_0=1,b_1=1,b_2=1$. \\
{\bf 2)} Let $x$ be a point of a circular graph. Without the point the Lusternik-Schnirelmann category 
is 1, after it is 2. The Euler characteristic has changed from 1 to 0.\\
{\bf 3)} Let $x$ be a point on the sphere. Without this point, 
the category is $1$, the complete graph has category $2$.
The Euler characteristic changes from $1$ to $2$. \\
{\bf 4)} Assume a figure $8$ graph has the vertex $x$ in the center. Removing 
$x$ does not change the category. It is $2$ both before and after. 
The Euler characteristic changes from $-1$ to $2$ during the separation. Indeed,
for any injective function $f$ which has $x$ as a maximum, $i_f(x)=-3$. \\

Lets look again at the gradient flow associated to an injective function $f:V \to \R$. 
It consists of graphs $G_f(x_j)$ where $G_f(x_j)$ is obtained from $G_f(x_{j-1})$ by attaching
the vertex $x_j$ via a pyramid extension.

\begin{defn}
Given an injective function $f$ on $G$. Define the {\bf category index} $k_f(x)$ of a vertex $x=x_j$ as
$k_f(x_j) = \cat(G_f(x_j))-\cat(G_f(x_{j-1}))$. 
\end{defn}

\begin{propo}
$\cat(G) = \sum_{x \in V} k_f(x)$. 
\end{propo}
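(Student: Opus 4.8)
The plan is to read $\sum_{x \in V} k_f(x)$ as a telescoping sum, in exact analogy with the proof of the Poincar\'e--Hopf Proposition earlier in the excerpt. First I would fix the ordering $f(x_1) < f(x_2) < \cdots < f(x_n)$ induced by the injective function $f$ and adopt the boundary convention $G_f(x_0) = \emptyset$ with $\cat(\emptyset) = 0$. With this, the gradient filtration reads
$$ \emptyset = G_f(x_0) \subset G_f(x_1) \subset \cdots \subset G_f(x_n) = G, $$
where each $G_f(x_j)$ is obtained from $G_f(x_{j-1})$ by the pyramid extension attaching $x_j$ over the subgraph $S^-_f(x_j)$, and $G_f(x_1)$ is the single vertex $\{x_1\}$ with $\cat = 1$.

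Next I would substitute the definition of the category index and collect the sum over all vertices:
$$ \sum_{x \in V} k_f(x) = \sum_{j=1}^n k_f(x_j) = \sum_{j=1}^n \bigl[ \cat(G_f(x_j)) - \cat(G_f(x_{j-1})) \bigr]. $$
The right-hand side telescopes: every interior term $\cat(G_f(x_j))$ appears once with a plus and once with a minus sign and cancels, leaving only the endpoints $\cat(G_f(x_n)) - \cat(G_f(x_0)) = \cat(G) - 0 = \cat(G)$. This is the claimed identity.

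The two facts that need to be pinned down are purely the endpoint values, $\cat(\emptyset) = 0$ (consistent with $k_f(x_1) = 1 - 0 = 1$ at the global minimum) and $G_f(x_n) = G$; neither the sign nor the magnitude of the individual increments $k_f(x_j)$ plays any role. Indeed one may remark, as a sanity check, that at a regular point $x_j$ the set $S^-_f(x_j)$ is contractible, so the extension $G_f(x_{j-1}) \to G_f(x_j)$ is a homotopy step and hence $\cat(G_f(x_j)) = \cat(G_f(x_{j-1}))$ by homotopy invariance of $\cat$, forcing $k_f(x_j) = 0$; the index is nonzero only at critical points. For this reason I do not expect any genuine obstacle: the content of the statement is the telescoping bookkeeping together with the two endpoint normalizations, exactly as in the Poincar\'e--Hopf formula $\chi(G(x)) = \sum_{y} i_f(y)$, where the homotopy invariant on the left is reconstructed from local increments on the right.
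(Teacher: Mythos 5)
Your telescoping argument is exactly what the paper intends: it states this identity holds ``almost by definition'' and gives no separate proof, the content being precisely the cancellation $\sum_j [\cat(G_f(x_j))-\cat(G_f(x_{j-1}))]=\cat(G_f(x_n))-\cat(G_f(x_0))$ with the conventions $G_f(x_n)=G$ and $\cat(\emptyset)=0$ that you make explicit. Your proposal is correct and matches the paper's (implicit) approach.
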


Similarly than Poincar\'e-Hopf $\chi(G) = \sum_{x \in V} i_f(x)$, the left hand side is a homotopy
invariant while the right hand side is made of parts $i_f(x)$ which are not. 
Indeed, already the number of vertices is not 
a homotopy invariant. Both $i_f(x)$ and $k_f(x)$ are of course graph isomorphism invariants.
If $\phi:G \to H$ is a graph isomorphism and $g(x)=f(\phi(x0)$ is the injective function on $H$ 
which corresponds to the function $f$ on $G$, then $i_g(\phi(x)) = i_f(x)$ and $k_g(\phi(x)) = k_f(x)$. \\

We have seen that $k_f(x) \neq 0$ implies that $x$ is a critical point.
A critical point does not need to have positive category index as a deformation of a letter $A$ 
graph $G_A$ to figure 8 graph $G_8$ shows. Both before and after the deformation 
the category is $\cat(G_A)=\cat(G_8)=2$. But $\crit(G_A)=2$ and $\crit(G_8)=3$.

\section{Cup length}

The cup length of a graph is an other homotopy invariant. 
It is defined cohomologically as in the continuum. For any finite simple graph $G=(V,E)$,
the vector space $\Omega_k$ of all skew symmetric functions on the set of $k$-dimensional simplices
$\G_k$ plays the role of {\bf differential forms}. The set $\Omega_0$ is the set of functions on 
vertices $V=\G_0$, the set $\Omega_1$ is the set of functions on edges $E=\G_1$. 
It has dimension $v=\sum_k v_k$ where
$v_k$ is the cardinality of the set $\G_k$ of all $k+1$ simplices $K_{k+1}$ in $G$. 
We will call elements in $\Omega_p$ simply {\bf $p$-forms}. In order to define the cup length, 
we need an exterior product which works for general finite simple graphs. 

\begin{defn}
The {\bf pre exterior product} $f \wedge' g$ of a $p$ form $f$ and a $q$ form $g$ is defined as
$f \wedge' g(x_0,\vec{x},\vec{y})$
  $= \sum_{(\sigma,\tau)} (-1)^{(\sigma,\tau)} f(x_0,\sigma(\vec{x})) g(x_0,\tau(\vec{y}))$, 
where $(\sigma,\tau)$ runs over all permutations of $\{1,\dots ,p,p+1,\dots ,p+q \; \}$ where 
$\sigma$ and $\tau$ are both ordered and $(-1)^{(\sigma,\tau)}$ is the sign of the permutation.
The {\bf wedge product} is $f \wedge g(\vec{z}) = 1/(p+q+1) \sum_{\sigma} (-1)^{\sigma} f \wedge g(\sigma(\vec{z}))$,
where $\sigma$ runs over all cyclic rotations of $\vec{z}=\{x_0,x_1,\dots, ,x_{p+q})$. 
\end{defn}

{\bf Remarks.} \\
{\bf 1)}  The pre exterior product is not yet a differential form since a vertex of the simplex is
distinguished. We have to average over the value we get from the other vertices of the simplex too. 
The averaging produces a $p+q$ form, a function on $\G_{p+q+1}$, the set of $p+q+1$ simplices. \\
{\bf 2)} The pre exterior product looks formally close to the definition of the exterior product in the continuum
so that it is good to keep this separate. Think of it as the exterior product centered at $x_0$. 
The averaging process, where each vertex of the simplex becomes the center renders it a function on the 
simplex. \\
{\bf 3)} In small dimensions, especially for computational methods and closer to geometric situations, 
it is possible to produce a discrete differential form calculus for which one has a hodge dual \cite{DKT}.
We don't have a hodge dual for general simple graphs and it looks futile to try in situations where we have
no Poincar\'e duality. \\

{\bf Examples.} \\
{\bf 1)} The exterior product of two one forms $f,g$ is the ``cross product'' of $f$ and $g$. 
It is a function on all triangles $(x_0,x_1,x_2)$ in $G$. We have $k(x_0,x_1,x_2) = f \wedge' g(x_0,x_1,x_2) = 
f(x_0,x_1) g(x_0,x_2) - f(x_0,x_2) g(x_0,x_1)$ and
\begin{eqnarray*}
 3 f \wedge g(x_0,x_1,x_2) &=& f(x_0,x_1) g(x_0,x_2) - f(x_0,x_2) g(x_0,x_1) \\
                           &+& f(x_1,x_2) g(x_1,x_0) - f(x_1,x_0) g(x_1,x_2) \\
                           &+& f(x_2,x_0) g(x_2,x_1) - f(x_2,x_1) g(x_2,x_0)  \; . 
\end{eqnarray*}
{\bf 2)} The exterior product of a $2$-form $k$ and a $1$-form $h$ gives the pre exterior product
$k\wedge'h(x_0,x_1,x_2,x_3)$=$k(x_0,x_1,x_2)h(x_0,x_3)$-$k(x_0,x_1,x_3)h(x_0,x_2)$+$k(x_0,x_2,x_3)h(x_0,x_1)$. Now
average this over all possible rotations of $x_0,x_1,x_2,x_3$ with the sign factor of the rotation.
This product can only be nonzero at vertices $x_0$ which are part of a $K_4$ clique.  \\
{\bf 3)} Given three $1$-forms $f,g,h$, we can use the previous two examples to get the 
``triple scalar product'' $f \wedge g \wedge h$ which is deduced from
\begin{eqnarray*}
  f\wedge' g\wedge' h(x_0,x_1,x_2) &=& 
      k(x_0,x_1,x_2)h(x_0,x_3)-k(x_0,x_1,x_3)h(x_0,x_2) \\ && \hspace{3cm} +k(x_0,x_2,x_3)h(x_0,x_1)\\
      &=&[f(x_0,x_1)g(x_0,x_2)-f(x_0,x_2)g(x_0,x_1)]h(x_0,x_3)  \\
      &-&[f(x_0,x_1)g(x_0,x_3)-f(x_0,x_3)g(x_0,x_1)]h(x_0,x_2)  \\
      &+&[f(x_0,x_2)g(x_0,x_3)-f(x_0,x_2)g(x_0,x_3)]h(x_0,x_1)  \\
      &=&\det  \left[ \begin{array}{ccc}
          f(x_0,x_1)&f(x_0,x_2)&f(x_0,x_3) \\
          g(x_0,x_1)&g(x_0,x_2)&g(x_0,x_3) \\
          h(x_0,x_1)&h(x_0,x_2)&h(x_0,x_3) 
                     \end{array} \right]  
\end{eqnarray*} 
and 
\begin{eqnarray*}
3f \wedge g \wedge h(x_0,x_1,x_2) &=& 
\det  \left[ \begin{array}{ccc}
          f(x_0,x_1)&f(x_0,x_2)&f(x_0,x_3) \\
          g(x_0,x_1)&g(x_0,x_2)&g(x_0,x_3) \\
          h(x_0,x_1)&h(x_0,x_2)&h(x_0,x_3) 
                     \end{array} \right] \\
&+& \det  \left[ \begin{array}{ccc}
          f(x_1,x_2)&f(x_1,x_3)&f(x_1,x_0) \\
          g(x_1,x_2)&g(x_1,x_3)&g(x_1,x_0) \\
          h(x_1,x_2)&h(x_1,x_3)&h(x_1,x_0) 
                     \end{array} \right] \\
&+& \det  \left[ \begin{array}{ccc}
          f(x_2,x_3)&f(x_2,x_0)&f(x_2,x_1) \\
          g(x_2,x_3)&g(x_2,x_0)&g(x_2,x_1) \\
          h(x_2,x_3)&h(x_2,x_0)&h(x_2,x_1) 
                     \end{array} \right] \; .
\end{eqnarray*}
{\bf 4)} The pre exterior product between the $1$-forms $f_1,\dots ,f_n$ is $f_1 \wedge' \cdots \wedge' f_n(x_0) = {\rm det}(A)(x_0)$,
where $A_{jk} = f_j(x_0,x_k)$. It can only be nonzero at vertices $x_0$ which are contained in a clique
$K_{n+1}$. The exterior product is an average of $n+1$ such determinants. \\
{\bf 5)} On a graph without triangles like the cube graph or the dodecahedron or for trees,
the exterior product is trivial $f \wedge g=0$ for all forms $f,g$ because there are no triangles
in those graphs.  \\
{\bf 6} Lets take the graph $G=K_3$. We have for example $(3,4,5) \wedge (2,1,4)  = 4/3$.
If we chose a basis $i=(1,0,0),j=(0,1,0),k=(0,0,1)$ and denote by $t$ the 2-form at the triangle.
and $T=t/3$, we can write down the multiplication table
\begin{center}
\begin{tabular}{l|llll}
    & i & j & k & t \\ \hline
 i  & 0 & T & -T& 0 \\
 j  &-T & 0 & T & 0 \\
 k  & T &-T & 0 & 0 \\
 t  & 0 & 0 & 0 & 0 \\
\end{tabular}
\end{center}

The exterior product defines a differential graded algebra on $V = \Omega_0 \oplus \Omega_1 \oplus \dots \oplus \Omega_n$,
where $K_{n+1}$ is the largest simplex which appears in the graph $G$. Its dimension is the sum 
$v=\sum_{j=0}^n v_j$ and the Euler characteristic of the graph is the super sum 
$\chi(G) = \sum_{j=0}^n (-1)^j v_j$. The product is associative and satisfies the same super anti-commutation 
relation $f \wedge g = (-1)^{p q} g \wedge f$ and the Leibniz rule $d(f \wedge g) = df \wedge g) + (-1)^p f \wedge dg$ 
as in the continuum. To see this, note that the pre-wedge product
has this property which is inherited by the sum. We only have to get used to the
fact that the ``tangent spaces'' at different vertices can vary from vertex to vertex and that we have
to average in order to have a meaningful product. \\

One forms are associated with functions on directed edges $e=(a,b)$ in such a way that
$f(a,b)=-f(b,a)$. As in the continuum, products of one forms can be used to generate $\Omega_p(G)$. 
Given an oriented edge $e=(x,y)$, we have a one form $f_{x,y}$ for which $f_{x,y}(x,y)=1, f_{x,y}(y,x)=-1$
and $f_{x,y}(p,q)=0$ for any other edge $(p,q)$ different from $(x,y)$. These one forms span $\Omega_1$
and the product $f_1 \wedge \cdots \wedge f_p$ of such one forms (attached to 
the same vertex to be nonzero) spans the vector space $\Omega_p$. These products play the analogue
of compactly supported differential forms $\rho(x) dx_{k_1} \wedge \cdots \wedge dx_{k_p}$ in the continuum. \\

{\bf Remarks.} \\
{\bf 1)} In the continuum, for manifolds, tangent spaces at different points
are isomorphic, this is not the case for graphs which have non-constant degree. Since ``tangent spaces" overlap
in the discrete, we have to symmetrize the pre wedge product. The graphs are arbitrary finite simple graphs $G=(V,E)$. 
While in the continuum, for $n$ dimensional manifolds $M$, the vector space $\Omega_k(x)$ has dimension 
$\left( \begin{array}{c} n \\ k \end{array} \right)$ at every point,
the dimensions $v_k(x)$ of $\Omega_k(x)$ can be different numbers and the sum $v_k=\sum_{x \in V} v_k(x)$
can be pretty arbitrary. \\
{\bf 2)} Many wedge products are zero. 
The {\bf cross product} $f \wedge g$ of two one forms $f,g$ for example can only
be nonzero if there is a triangle at that vertex.
For planar graphs, there are no hyper tetrahedral subgraphs $K_5$, 
so that the product of two 3-forms is always zero for planar graphs.

\begin{defn}
The {\bf exterior derivative} $d: \Omega_k \to \Omega_{k+1}$ is defined as 
$df(x_0,\dots, x_k) = \sum_{j=0}^k (-1)^j f(x_0,\dots,\hat{x}_j,\dots,x_k)$. Since
$d \circ d(f)=0$, the vector space $\ran(d_{k-1})$ of coboundaries is contained in 
the vector space $\ker(d_k)$ of cocyles and defines the vector space 
$H^k(G)=\ker(d_k)/\ran(d_{k-1})$ called $k$'th cohomology group of $G$. Its dimension $b_k$
is called the k'th {\bf Betti number}. The vector $(b_0,b_1, \dots, )$ is called the 
{\bf Betti vector} of $G$, the polynomial $\sum_{k=0}^{\infty} b_k t^k$ the {\bf Poincar\'e polynomial}.
\end{defn}

It follows from the Rank-nullety theorem in linear algebra and cancellations
by summing up that the cohomological Euler characteristic
$\sum_{k=0}^{\infty} (-1)^k b_k$ agrees with the combinatorial Euler 
characteristic $\sum_{k=0}^{\infty} (-1)^k v_k$ for any finite simple graph.
The associative product $\wedge$ which defines the {\bf exterior algebra} $(\Omega,\wedge)$ 
on a finite simple graph induces a {\bf cup product} $H^k(G) \times H^l(G) \to H^{k+l}$ on the
equivalence classes by $[f] \wedge [g] = [f \wedge g]$. The fact that the wedge product of 
two coboundaries is a coboundary follows from the Leibniz rule $d(f \wedge g) = df \wedge g 
+ (-1)^p f \wedge dg$ for a $p$ and $q$ form. The cup product is therefore well defined as
in the continuum. 

\begin{defn}
The minimal number $m$
of $k$-forms $f_j$ with $k \geq 1$ in this algebra with the property that
$f_1 \wedge f_2 \cdots \wedge f_k$ is always zero in $H^m(G)$ is called the {\bf cup length} of
the graph $G$. It is denoted by $\ccup(G)$. 
\end{defn}

{\bf Examples.} \\
{\bf 1)} The cup length of $K_n$ is $1$ because only $H^0(G)$ is nonzero so that any $k$ form $f$
with $k \geq 1$ is already zero by itself in $H^k$. Similarly the cup length of any contractible
graph is $1$ and agrees with $\cat(G)$. More generally, the cup length of any graph homotopic to a one
point graph is $1$. \\
{\bf 2)} The cup length is $1$ for any contractible graph,
as homotopy deformations do not change cohomology. \\
{\bf 3)} The cup length is $2$ for any discrete sphere like an octahedron or icosahedron
 or the 16-cell, the 3 dimensional cross polytop which has $8$ vertices, $12$ edges,
$8$ faces and $16$ tetrahedral cells. The reason is that the Betti vector is $(1,0,\dots,0,1)$
and there is always a volume form $f$ which is nonzero. Since there is no other $k$ form 
$k\geq 1$ available, any product of two forms is zero. \\
{\bf 4)} The cup length is equal to $d+1$ for any discrete torus $G=T^d$. 
The reason is that $H^1(G)$ is $d$ dimensional a basis $f_1,\dots,f_d$ 
of which generates all cohomology classes. $H^p(G)$. Since 
$f_1 \wedge \cdots \wedge f_d$ is a basis for the one dimensional space of volume forms
and so nonzero, $\ccup(G) \geq d+1$. But since any product of $d+1$ or more $k$-forms
with $k \geq 1$ is zero, $\ccup(G) \leq d+1$.

\begin{propo}
The cup length is a homotopy invariant. 
\end{propo}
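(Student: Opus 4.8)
The plan is to reduce the statement to the fact that $\ccup(G)$ is an invariant of the graded cohomology ring $(H^*(G),\wedge)$ and then to show that a single homotopy step induces an isomorphism of such rings. Since $\ccup(G)$ is defined purely through the vanishing of products of positive-degree classes in $H^*(G)$, any graded ring isomorphism $H^*(G') \to H^*(G)$ preserves it; and because homotopy is generated by homotopy steps and their reverses (the equivalence relation of the earlier Proposition), it suffices to treat one pyramid extension $G \to G'$ over a contractible subgraph $H$ together with a new vertex $x$, where $S(x)=H$.

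First I would introduce the restriction map. The inclusion of $G$ as a subgraph of $G'$ gives a map $r \colon \Omega(G') \to \Omega(G)$ obtained by restricting each form to those simplices $K_{k+1}$ that already lie in $G$. Because both $d$ and $\wedge$ are computed simplex by simplex (the wedge product of the Definition is evaluated at each fixed simplex and then averaged over its own vertices), restriction commutes with $d$ and satisfies $r(f \wedge g) = r(f) \wedge r(g)$. Thus $r$ is a homomorphism of differential graded algebras and descends to a graded ring homomorphism $r^* \colon H^*(G') \to H^*(G)$.

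Next I would show that $r^*$ is a linear isomorphism in every degree by a Mayer--Vietoris argument built on the pyramid structure. Writing $B=B(x)$ for the unit ball at $x$, which is the pyramid extension of $H=S(x)$, we have $G'=G \cup B$ and $G \cap B = H$, with both $B$ and $H$ contractible in themselves. Every simplex of $G'$ lies in $G$, in $B$, or, when in both, in $H$, so the restriction maps assemble into a short exact sequence of cochain complexes
\[
0 \to \Omega(G') \to \Omega(G) \oplus \Omega(B) \to \Omega(H) \to 0,
\]
whose associated long exact sequence is the discrete analogue of Mayer--Vietoris, its alternating Euler characteristic being exactly Lemma 1. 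Since contractible graphs have the cohomology of a point, $H^k(B)=H^k(H)=0$ for $k \geq 1$ and the degree-zero restriction map $H^0(G)\oplus H^0(B) \to H^0(H)$ is surjective because $G$, $B$ and $H$ are connected; the long exact sequence then forces $r^*$ to be an isomorphism on each $H^k$. Equivalently one may invoke the homotopy invariance of the cohomology groups already recorded from \cite{I94}, observing that the isomorphism there is the one realized by this restriction map.

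Finally, a ring homomorphism that is a linear isomorphism is a ring isomorphism, so $r^*$ identifies $(H^*(G'),\wedge)$ with $(H^*(G),\wedge)$ as graded rings; hence $\ccup(G')=\ccup(G)$, and iterating over the chain of homotopy steps yields homotopy invariance of $\ccup$. The main obstacle is the middle step: establishing that the restriction map, rather than merely some abstract isomorphism, realizes the cohomology isomorphism, which is precisely what makes the ring structure transport correctly. Verifying exactness of the short Mayer--Vietoris sequence requires checking that every simplex of $G'$ is counted exactly once with the correct signs, but this is the same bookkeeping underlying Lemma 1 and presents no genuine difficulty.
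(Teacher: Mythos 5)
Your proof is correct, and it is substantially more complete than the one in the paper, which consists of the two sentences ``Cohomology does not change under elementary homotopy deformations'' and the assertion that if $f_1\wedge\cdots\wedge f_n=0$ then ``the corresponding'' $g_1\wedge\cdots\wedge g_n=0$, without ever saying what the correspondence is or why it respects products. You supply exactly the missing mechanism: the observation that restriction along the inclusion $G\subset G'$ is a morphism of differential graded algebras (because both $d$ and $\wedge$ are computed simplex-by-simplex), together with the Mayer--Vietoris sequence for the decomposition $G'=G\cup B(x)$, $G\cap B(x)=S(x)=H$, which shows that this restriction is a quasi-isomorphism. The Mayer--Vietoris argument appears nowhere in the paper (only its Euler-characteristic shadow, Lemma 1, does), so this is a genuinely different and more self-contained route; what it buys is precisely the point the paper glosses over, namely that the isomorphism of cohomology groups cited from Ivashchenko is induced by a ring map, so that the cup length transports. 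Two small caveats: you do not need (and should not assume) that $G$ is connected --- surjectivity of $H^0(G)\oplus H^0(B)\to H^0(H)$ already follows from $B$ and $H$ being connected --- and your argument still rests on the fact that graphs contractible in themselves have the cohomology of a point, which the paper also takes for granted; it is worth noting that this fact is itself proved by induction on the order using the very same Mayer--Vietoris sequence (with $B(x)$ a cone), so no circularity arises.
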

\begin{proof}
Cohomology does not change under elementary homotopy deformations.
If $G_1$ and $G_2$ are
homotopic and $f_1 \wedge \cdots \wedge f_n=0$, then the corresponding 
$g_1 \wedge \cdots \wedge g_n=0$ in $G_2$. Therefore $\ccup(G_2) \leq \ccup(G_1)$. 
Reversing this shows $\ccup(G_2)=\ccup(G_1)$. 
\end{proof} 

\begin{thm}[Cup length estimate]
$\ccup(G) \leq \cat(G)$.
\end{thm}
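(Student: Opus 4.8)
The plan is to run the classical representative proof of the cup-length estimate inside the explicit form calculus $(\Omega,\wedge,d)$ constructed above. Since $\ccup$ is a homotopy invariant by the previous proposition and $\cat$ is one by definition, it suffices to establish the sharper bound $\ccup(G)\le\tcat(G)$ for a single graph: applied to any $H$ homotopic to $G$ this gives $\ccup(G)=\ccup(H)\le\tcat(H)$, and minimizing over such $H$ yields $\ccup(G)\le\cat(G)$. So I would fix a category cover $\{U_1,\dots,U_n\}$ of $G$ with $n=\tcat(G)$, each $U_j$ contractible in $G$, and take arbitrary classes $u_1,\dots,u_n\in H^*(G)$ of positive degree $p_j\ge 1$. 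The goal is to prove $u_1\wedge\cdots\wedge u_n=0$ in $H^*(G)$, which by definition gives $\ccup(G)\le n$.

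The first step is a representative lemma: each $u_j$ has a cocycle representative $\omega_j$ that \emph{vanishes on every simplex contained in $U_j$}. Because $U_j$ is contractible in $G$, the restriction of $u_j$ to $U_j$ is zero in $H^{p_j}(U_j)$ --- directly if $U_j$ is contractible in itself, since then its positive-degree cohomology vanishes, and otherwise because the contraction of $G$ collapsing $U_j$ to a point exhibits the inclusion $U_j\hookrightarrow G$ as homotopic to a constant, so the induced map on positive-degree cohomology is zero. Hence there is a $(p_j-1)$-form $\beta_j$ on $U_j$ with $d\beta_j=\omega_j|_{U_j}$. Extend $\beta_j$ by zero to a form $\tilde\beta_j$ on $G$, setting it to $0$ on simplices not contained in $U_j$. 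For a $p_j$-simplex $z\subseteq U_j$ every face of $z$ again lies in $U_j$, so
\[
 d\tilde\beta_j(z)=\sum_k(-1)^k\tilde\beta_j(z\setminus x_k)=\sum_k(-1)^k\beta_j(z\setminus x_k)=d\beta_j(z)=\omega_j(z).
\]
Thus $\omega_j':=\omega_j-d\tilde\beta_j$ is cohomologous to the original representative and vanishes on all simplices contained in $U_j$.

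The second step multiplies these representatives. The product $\omega_1'\wedge\cdots\wedge\omega_n'$ represents $u_1\wedge\cdots\wedge u_n$ and has degree $p_1+\cdots+p_n$. Evaluate it on a simplex $z$: in each term of the averaged wedge every factor $\omega_j'$ is evaluated on a $p_j$-dimensional sub-simplex of $z$ sharing the centering vertex. If $z\subseteq U_j$ for some single $j$, then that sub-simplex lies in $U_j$, so $\omega_j'$ kills every term and the product vanishes at $z$. Hence, \emph{provided every simplex of $G$ is contained in one member of the cover}, $\omega_1'\wedge\cdots\wedge\omega_n'=0$ identically, giving $u_1\wedge\cdots\wedge u_n=0$ and therefore $\ccup(G)\le n=\tcat(G)$.

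The main obstacle is exactly the proviso in the last step: a category cover may contain simplices that straddle several $U_j$, with their vertices and edges split among different pieces, and on such a simplex the product need not vanish --- this is the discrete shadow of the fact that the cup product of relative cohomology classes is only defined for excisive covers. The remedy, as in the continuum where one passes to sufficiently small simplices by a Lebesgue-number argument, is to barycentrically subdivide $G$; subdivision is a homotopy, so it changes neither $\ccup$ nor the minimal category, and after enough subdivision one should be able to thicken the cover to an $n$-element category cover in which every simplex lies inside a single member. Making this reduction precise in the purely graph-theoretic language, producing the subdivided good cover with the same number $n$ of in-$G$-contractible pieces, is the technical heart of the argument. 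A secondary point needing care is the functoriality used above, namely that a null-homotopic inclusion $U_j\hookrightarrow G$ induces the zero map on positive-degree cohomology, which refines the homotopy invariance of $H^*$ established earlier to the naturality of the induced maps.
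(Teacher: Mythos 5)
Your proposal follows the same route as the paper's proof: reduce to $\ccup(G)\le\tcat(G)$ and conclude by homotopy invariance, then gauge each class $u_j$ by a coboundary so that its representative vanishes on all simplices contained in the cover element $U_j$ (using that $U_j$ is contractible in $G$), and finally argue that the product of these representatives vanishes identically. The paper's own argument is exactly this, stated even more briefly: it asserts that ``we can render $f$ zero in $U_k$'' and then that ``taking these representatives, we see $f_1\wedge\cdots\wedge f_n=0$,'' with no further justification. The obstacle you single out --- that a cover here is only required to cover vertices and edges, so a higher simplex may straddle several $U_j$ without being contained in any single one, in which case the term-by-term vanishing of the averaged wedge product is not automatic --- is a genuine issue, the discrete counterpart of the excisiveness hypothesis needed to define relative cup products in the continuum; the paper does not address it. Your proposed remedy (subdivide, then rebuild an $n$-element category cover in which every simplex sits inside one member) is the natural one, but you do not carry it out, so your write-up is, by your own admission, incomplete at precisely the point where the paper's proof is silent. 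The secondary point you flag, that a null-homotopic inclusion $U_j\hookrightarrow G$ induces the zero map on positive-degree cohomology, is likewise used without proof in the paper. In short: same approach, and the gap you honestly identify is a gap in the published argument as well, not a defect peculiar to your attempt.
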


The proof is the same as the continuum (see e.g. \cite{Bott82} page 341 or 
the introduction to \cite{CLOT}).

\begin{proof}
Assume $\cat(G)=n$. 
Let $\{U_k \; \}_{k=1}^n$ be a Lusternik-Schnirelmann cover of $G$.
Given a collection of $k_j \geq 1$-forms $f_j$ with $f_1 \wedge f_2 \dots \wedge \dots f_n \neq 0$.
Using coboundaries  we can achieve that for any simplex $y_k \in U_k$, we can gauge $f$ so that   % simplel -> simplex
$f(y_k)=0$. Because $U_k$ are contractible in $G$, we can render $f$ zero in $U_k$. This shows that 
we can chose $f_k$ in the relative cohomology groups $H^k(G,U_k)$ meaning that we can 
find representatives $k_j$ forms $f_j$ which are zero on each $p_{k_j}$ simplices 
in the in $G$ contractible sets $U_k$. But now, taking these  % in G was ADDED
representatives, we see $f_1 \wedge \cdots \wedge f_n = 0$. This shows $\ccup(G) \leq n$. 
\end{proof}

{\bf Examples.}\\
{\bf 1)} We have seen that for contractible graphs $\ccup(G) = \cat(G) = \crit(G) = 1$. \\
{\bf 2)} For spheres, for which the Betti vector is $(1,0, \dots, 0,1)$, we have $\ccup(G)=\cat(G)=\crit(G)=2$. \\
{\bf 3)} For a triangularization of $T^n$, the Betti vector is $(1,n,n(n-1)/2,\dots, n,1)$ 
and $\ccup(G) \geq n+1$ because we can find $n$ one-forms whose product is not zero in 
$H^n(G)$. Because $\crit(G)=n+1$ we have $\ccup(G)=\cat(G)=\crit(G)=n+1$. 
In the continuum, the standard example of a function on the $2$-torus with 3 critical points is 
$f(x,y) = \sin(x)+\sin(y) + \sin(x+y)$ an example which discretizes. For this function,
the index of one of the critical point (monkey saddle type) is $-2$ and the other two 
critical points (max and min) have index $1$.  \\
{\bf 4)} Lets see how this look in detail for a cyclic graph $C_4$ which has $4$ vertices
and $4$ edges. There is a nonzero $1$ form $f$ which is equal to $1$ for any edge $(i,i+1)$, satisfies
$df=0$ but which is not a gradient $f=dg$. 
Therefore $\ccup(G) \geq 2$. To see that $\ccup(G) \leq 2$, we have to show that the wedge product of 
any two $1$ forms $f,g$ is zero. Take the category cover $U_1 = \{1,2,3 \; \}$ and $U_2 = \{3,4,1 \; \}$
Take coboundaries = gradients $dh,dk$ with $h=(0,f(1,2),f(2,3)-f(1,2),0)$ and
$k=(0,0,g(3,4),g(4,1)-g(3,4))$. Then the representatives $F=f-dh=(0,0,*,*)$ and $G=g-dk = (*,*,0,0)$ of 
the cohomology classes have the property that $F \wedge G = 0$. This 
means that the cup product of $f$ and $g$ is zero. \\

\begin{coro}
For any injective function $f$, the cup length of a graph is bounded above 
by the number of critical points of $f$.
\end{coro}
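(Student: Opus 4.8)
The plan is to obtain the bound by concatenating the two principal estimates already in hand, since all of the substantive work has been carried out in Theorem~1 and in the cup length estimate. First I would invoke the cup length estimate, which gives $\ccup(G) \leq \cat(G)$. Next, the Lusternik--Schnirelmann category is by definition the minimum of $\tcat(H)$ taken over all $H$ homotopic to $G$, so in particular $\cat(G) \leq \tcat(G)$. Finally, Theorem~1 supplies $\tcat(G) \leq \crit(G)$. Chaining these three inequalities yields $\ccup(G) \leq \crit(G)$.

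It then remains only to pass from the invariant $\crit(G)$ to the number of critical points of a fixed injective function $f$. By definition, $\crit(G)$ is the minimal number of critical points attained over all injective functions on $V$; hence for any particular injective $f$, the number of critical points of $f$ is at least $\crit(G)$. Combining this with $\ccup(G) \leq \crit(G)$ gives the asserted bound $\ccup(G) \leq \#\{\text{critical points of } f\}$ for every injective $f$.

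I expect no genuine obstacle here: the corollary is a formal consequence of the two main theorems, and the only point requiring a word of care is that the cup length estimate is phrased in terms of the homotopy invariant $\cat(G)$ rather than of $\tcat(G)$. Because $\cat(G) \leq \tcat(G)$ always holds, this is precisely the direction of the inequality that is needed, so no homotopy deformation of $G$ has to be produced to close the argument. A more hands-on alternative would track the cup length directly along the gradient filtration $G_f(x_j)$, using that the category is unchanged at regular points; but this merely reassembles the same ingredients, and the concatenation above is cleaner.
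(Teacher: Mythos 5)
Your proposal is correct and follows essentially the same route as the paper: the paper's proof likewise just concatenates the cup length estimate $\ccup(G)\leq\cat(G)$ with $\cat(G)\leq\tcat(G)\leq\crit(G)$ and then uses that $\crit(G)$ is the minimum over all injective functions. Your write-up is simply a more explicit version of the same two-line argument.
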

\begin{proof}
Combining the two theorems leads to the estimate 
$$  \ccup(G) \leq \cat(G) \leq \crit(G) $$
between two homotopy invariants and one simple homotopy invariant of a graph. 
\end{proof}

\section{The dunce hat}

The dunce-hat is a two dimensional space of Euler characteristic $1$. 
It has been proven important in the continuum with relations up to the 
Poincar\'e conjecture and it is pivotal also here to understand the 
boundaries of the Lusternik-Schnirelmann theorem $\ccup(G) \leq \tcat(G) \leq \crit(G)$.
It is a cone where the boundary rim is glued to a radius. Like the Bing house of two rooms
(for which a graph theoretical implementation was given in \cite{CYY}), it is an example
of a space which is homotopic to a point but not collapsible in itself to a point. 
It is not a manifold: some unit spheres are figure 8 graphs of Euler characteristic
$-1$. The space has been introduced in \cite{Zeeman64}. A graph theoretical implementation  $G$
with $v_0=17$ vertices, $v_1=52$ edges and $v_2=36$ triangles 
was given in \cite{BouletFieuxJouve} and shown in Figure~\ref{duncehat}.
The unit spheres are all one dimensional. Some of them are 
circles $C_4,C_6,C_{12}$ but there are others which are homotopic to a 
figure 8 graph. Every vertex has dimension $2$ in the graph 
theoretical sense since each unit sphere is one dimensional in the graph theoretical sense. 
Since some spheres are not circles, it is not a polyhedron in the graph theoretical sense. 

\begin{figure}
\scalebox{0.20}{\includegraphics{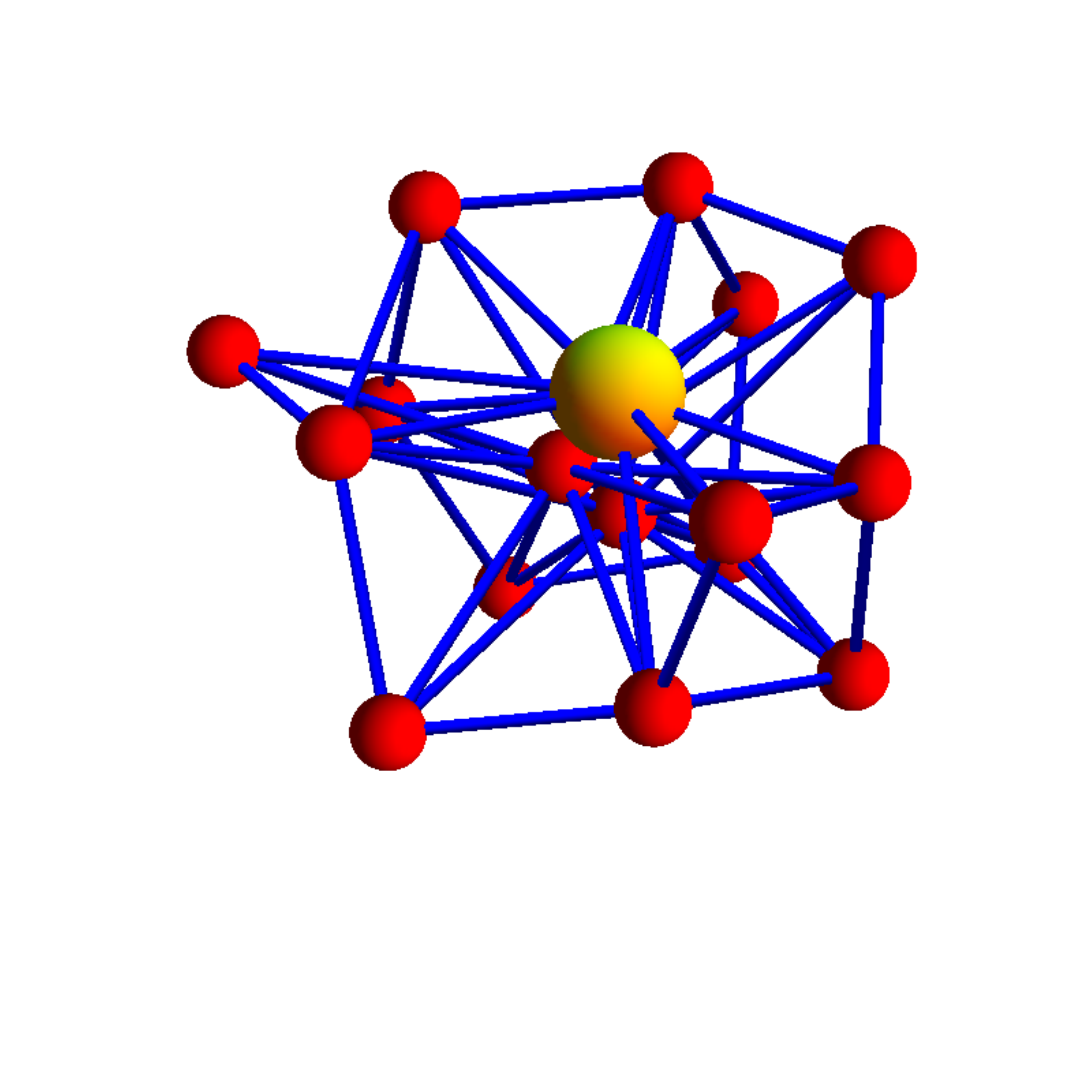}}
\scalebox{0.20}{\includegraphics{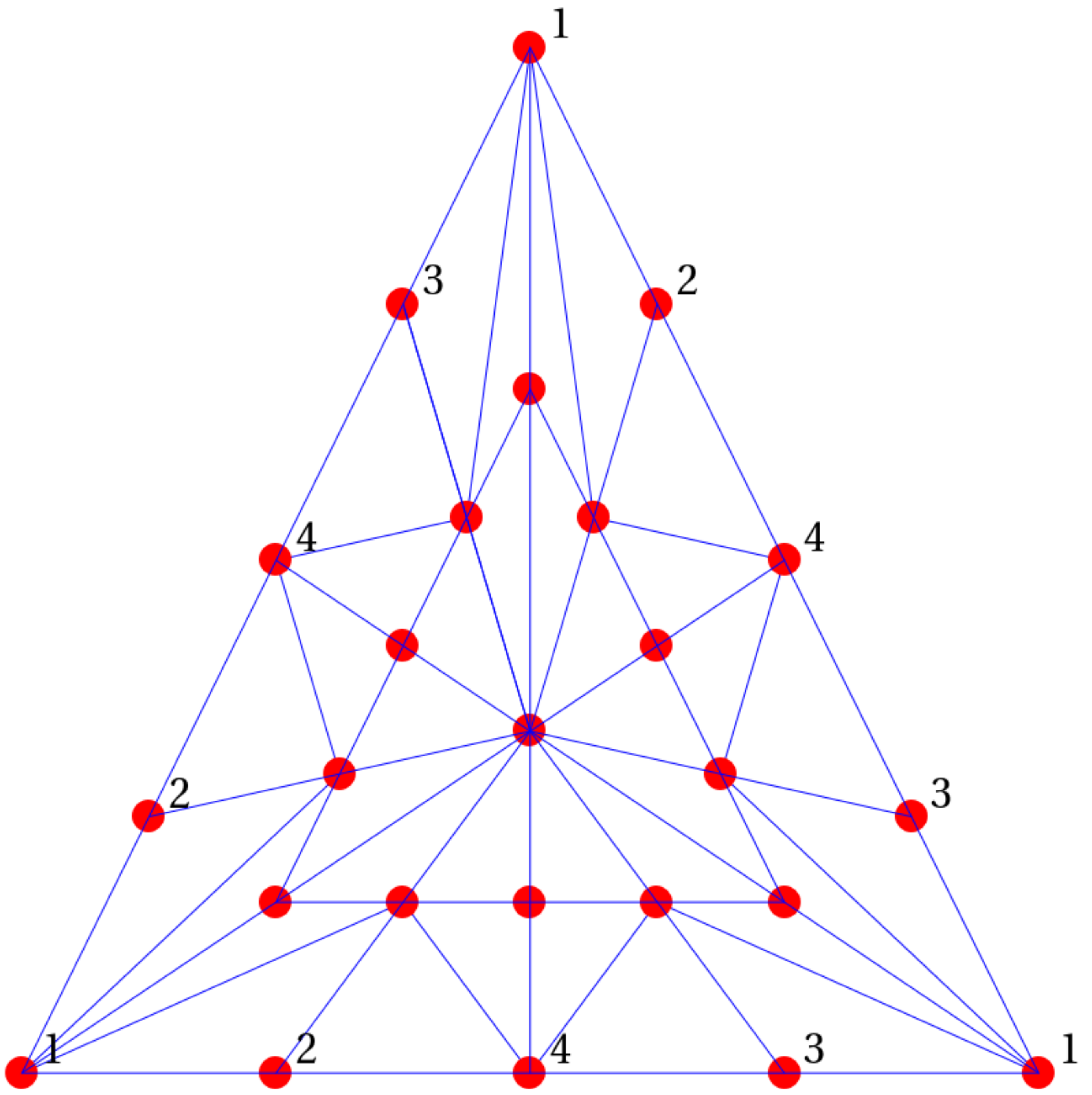}}
\caption{
A graph version of the dunce-hat \cite{Zeeman64}.  This implementation with 17 vertices
had been used in \cite{BouletFieuxJouve}.
It is an example of a graph which has the cohomology of a point, which has
category $2$ (because it is not contractible) and where the minimal number of critical points is $3$
(because for two dimensional graphs every critical point has nonzero index and the sum of indices is the
Euler characteristic $1$ showing that besides the minimum of index $1$ at least two other critical 
points have to be present). Since the graph is not contractible to a point but 
homotopic to a point, the homotopy needs first to expand the graph. Indeed $G$ is a homotopy
retract of a three dimensional contractible graph.
\label{duncehat}
}
\end{figure}

\begin{propo}
The dunce-hat graph $G$ satisfies
a) $\ccup(G)=1$, b) $\tcat(G)=2$ and c) $\crit(G)=3$
Furthermore, $\cat(G)=1$ and $\cri(G)=1$. 
\end{propo}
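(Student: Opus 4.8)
The plan is to dispose first of statement (a) and of the two homotopy invariants $\cat(G),\cri(G)$, which follow formally, and then to pin down the two sandwiched quantities $\tcat(G)$ and $\crit(G)$ by combining a clean topological lower bound with an upper bound that has to be exhibited by hand. For (a) I would use that $G$ has the cohomology of a point, i.e. Betti vector $(1,0,0)$, so $H^k(G)=0$ for every $k\ge 1$; by the definition of cup length already a single $k$-form with $k\ge 1$ is zero in $H^k(G)$, whence $\ccup(G)=1$. For the homotopy invariants I would invoke that $G$ is a homotopy retract of a three-dimensional contractible graph $K$ (stated in the introduction), so $G$ is homotopic to $K$. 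Since $K$ is contractible, $\tcat(K)=1$ and $\crit(K)=1$, giving $\cat(G)\le\tcat(K)=1$ and $\cri(G)\le\crit(K)=1$. The reverse inequalities $\cat(G)\ge\ccup(G)=1$ (cup length estimate) and $\cri(G)\ge 1$ (every injective function has a global minimum) then force $\cat(G)=\cri(G)=1$.

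For (b) the lower bound $\tcat(G)\ge 2$ amounts to showing $G$ is not contractible in itself. Here I would use that every unit sphere $S(x)$ is one-dimensional, namely a circle $C_n$ or a figure-eight graph, hence of Euler characteristic $0$ or $-1$ and never contractible. If $G$ were contractible in itself there would be an injective $f$ with a single critical point; but at the global maximum $x_{\max}$ one has $S^-_f(x_{\max})=S(x_{\max})$, which is not contractible, so $x_{\max}$ is a critical point distinct from the minimum, a contradiction. For the upper bound $\tcat(G)\le 2$ I would exhibit an explicit cover by two subgraphs $U_1,U_2$, each contractible in itself (which suffices, since contractible in itself implies contractible in $G$). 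Splitting the underlying triangle, whose three sides are identified, into a central disk-like piece and a peripheral piece near the identified edge should yield two collapsible subgraphs, and I would certify contractibility by producing on each $U_j$ an injective function with a single critical point.

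For (c) the lower bound $\crit(G)\ge 3$ comes from Poincar\'e-Hopf. Any injective $f$ has a global minimum of index $1$ and a global maximum with $S^-_f(x_{\max})=S(x_{\max})$ a circle or figure-eight, hence of index $1$ or $2$; these two critical points already contribute at least $2$ to the index sum, while $\sum_{x} i_f(x)=\chi(G)=1$. Thus there must be at least one further critical point of negative index, so $\crit(G)\ge 3$. For $\crit(G)\le 3$ I would construct an explicit injective function with exactly three critical points, a minimum, a saddle and a maximum whose indices sum to $1$, mirroring the classical fact that the dunce hat admits a discrete Morse function with one critical cell in each of the dimensions $0,1,2$.

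The main obstacle is that both upper bounds, $\tcat(G)\le 2$ and especially $\crit(G)\le 3$, are the genuinely constructive parts: producing the two-element category cover and an injective function with only three critical points on the concrete $17$-vertex model requires verifying the down-spheres $S^-_f(x)$ vertex by vertex. This is bookkeeping rather than a conceptual difficulty, and is most transparently carried out by displaying the function directly on the figure.
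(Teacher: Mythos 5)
Your proposal is correct and follows essentially the same route as the paper: cohomology of a point for (a), an explicit two-set cover plus non-contractibility for (b), a Poincar\'e--Hopf index count for (c), and homotopy invariance (via the contractible three-dimensional graph retracting to $G$) for $\cat$ and $\cri$. Your sub-arguments are in fact slightly sharper in two places --- the paper certifies non-contractibility by ``looking through all possible cases,'' whereas you observe that $S^-_f(x_{\max})=S(x_{\max})$ is a non-contractible circle or figure-eight for any injective $f$, and you make the index bookkeeping at the maximum explicit rather than asserting that all critical points have nonzero index --- while the remaining constructive upper bounds (the cover $V_1=V\setminus\{1,3\}$, $V_2=\{1,3\}$ and a concrete function with three critical points) are left as direct verifications in both treatments.
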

\begin{proof}
a) The graph is connected and simply connected so that $H^0(G)=R$ and $H^1(G)=0$. The Betti
vector is $(1,0,0,\dots)$ so that the space is star shaped = ``a homology point''. Because
for every $k \geq 1$ form $f$ we have $f=0$, the cup length is $1$. \\
b) It is possible to cover the graph with two contractible subgraphs generated
by the vertices $V_1=V \setminus \{1,3\}$ and $V_2=\{1,3\}$. Therefore $\ccup(G) \leq 2$. 
If $G$ were contractible, there would be an injective function on the vertex set with
only one critical point. Looking through all possible cases shows that this is not the case.
Therefore, $\ccup(G) \geq 2$.  \\
c) There are concrete functions which have three critical points. 
The minimum has index $1$. Each critical point must have nonzero index 
because the only subgraph of any of the unit sphere $S(x)$ only has index
$1$ if it is contractible meaning that the vertex $x$ is then not a critical
point. Because the sum of the indices is $1$ by Poincar\'e-Hopf, there must be at least $3$
critical points. 
d) and e) follow from homotopy invariance and the fact that $G$ is homotopic to a one point graph.
\end{proof}

{\bf Remarks.} \\
{\bf 1)} The dunce hat graph is an example showing that $\crit(G)$ is not necessarily 
invariant under homotopies. While expanding or shrinking a graph, the number of
critical points of the corresponding function does not change, but there might be a
different function available which has less critical points. \\
{\bf 2)} The still open Zeeman conjecture which implies the (now proven) Poincar\'e conjecture in the
continuum states that the product of any space homotopic to a point with an interval is collapsible in the Whitehead
sense after some barycentric subdivisions. In a graph theoretical sense, the Zeeman conjecture would be:
for any graph $G$ which is homotopic to the identity, any new graph $H$ obtained by triangulating first the direct
product graph $G \times K_2$ and then refine the triangularization with pyramid extensions over
simplices, is contractible. 

\section{Curvatures}

Like Euler characteristic or cohomology, category theory is important because it
provides an other homotopy invariant for graphs $G=(V,E)$. For each of the numerical invariants
$\chi(G),b_k(G),\cat(G)$ and any injective function $f$ on the vertex set $V$ we
can build up the graph given the ordering defined by $f$ and keep track on how the 
Euler characteristic, Betti numbers or category change. We get so 
an index $i_f(x),j_{k,f}(x),k_f(x)$ at every point and Poincar\'e-Hopf type formulas
$$ \chi(G)    = \sum_{x \in V} i_f(x) \; , 
   b_k(G) = \sum_{x \in V} j_{f,k}(x) \; ,
   \cat(G) = \sum_{x \in V} k_f(x)    \; .   $$
The left hand side is always a homotopy invariant, while the components to the
right can also depend on a function $f$ and the actual implementation of the graph $G$.
Since category can increase maximally by $1$ when adding a critical points, we have
$k_f(x) \leq 1$. But $k_f(x)$ is not bounded below as a pyramid extension over the whole graph shows,
where the category, however large, collapses to $1$. \\

By averaging the index over all injective functions where the values $f(x)$ are independent, identically distributed
random variables with uniform distribution in $[-1,1]$, we get curvatures and Gauss-Bonnet
type theorems
$$  \chi(G)  = \sum_{x \in V} K(x)    \; , 
    b_k(G)   = \sum_{x \in V} B_k(x)  \; , 
    \cat(G)  = \sum_{x \in V} C(x)    \; . $$
The Euler curvature $K(x)$ has been defined in the continuum and the identity is called the 
Gauss-Bonnet-Chern theorem. The other curvatures
seem not have been appeared, also not in the continuum.
But {\bf Betti curvatures} $B_k$ and {\bf category curvature} $C$ are not local, unlike
the Euler curvature $K(x)$.  \\

{\bf Examples} \\
{\bf 1)} Look at a linear graph $I_n$ with vertex set $V=\{x_1,x_2,\dots ,x_n \; \}$ 
with $n$ vertices and look at a function $f$ on the vertex set $V$. 
At any local minimum we have $k_f(x)=1$ and every local maximum except at 
the boundary we have $k_f(x)=-1$. We see that $C(x)=1/2$ at the boundary points.  \\
{\bf 2)} For a cyclic graph $C_n$, the index $k_f(x)$ is $1$ at a local minimum and $-1$
at a local maximum except at the global maximum, where $k_f(x)=1$ too. We have $C(x)=2/n$.
This is also to be expected due to the fact that $\cat(G)=2$ and $C(x)$ is constant.
This example shows that the index can not be local because the curvature depends on $n$,
while the local neighborhood does not. \\
{\bf 3)} For a complete graph $K_{n+1}$, the index $k_f(x)$ is $1$ at a local minimum and $-1$ at
a local maximum except at the global max. We have $k_f(x)=1/(n+1)$. Again this also has
to be expected from symmetry.  \\
{\bf 4)} By symmetry, the category curvature at a vertex of the octahedron is $1/4$ and the 
category curvature of an icosahedron $1/6$ at every vertex.

\section{Questions}

{\bf A)} The Euler-Poincar\'e formula $\chi(G) = \sum_{j=0}^{\infty} (-1)^j b_j$
shows that a change of the Euler characteristic is necessarily linked to a 
change of cohomology. On the other hand, a cohomology change of $G(x)$ implies that $x$ is a critical point. 
Lets call a graph {\bf star shaped} if it is connected and only $H^0(G)$ is nontrivial.
The dunce-hat $G$ is star shaped and homotopic to the identity but it is not contractible: $\cat(G)=2$.
One can ask about characterizing such discrepancies.
The case of homology spheres in the continuum shows that cohomology does not 
determine spaces up to homotopy. Is there a systematic way to construct graphs for which $\cat(G)-\ccup(G)$
is a given positive number? 
The relation between cohomology, category and critical points might also allow to give 
upper and lower estimates for $h(n)$ the number of homotopy classes of finite simple graphs 
of order $n$. We have mentioned that the computation of $h(n)$ is difficult with
$2^{n(n-1)/2}$ graphs of order $n$. Computing the number of homotopy classes of $\cat(G)=k$,
$\ccup(G)=k$ or $\crit(G)=k$ graphs might help to split up the task.  \\

\begin{figure}
\scalebox{0.13}{\includegraphics{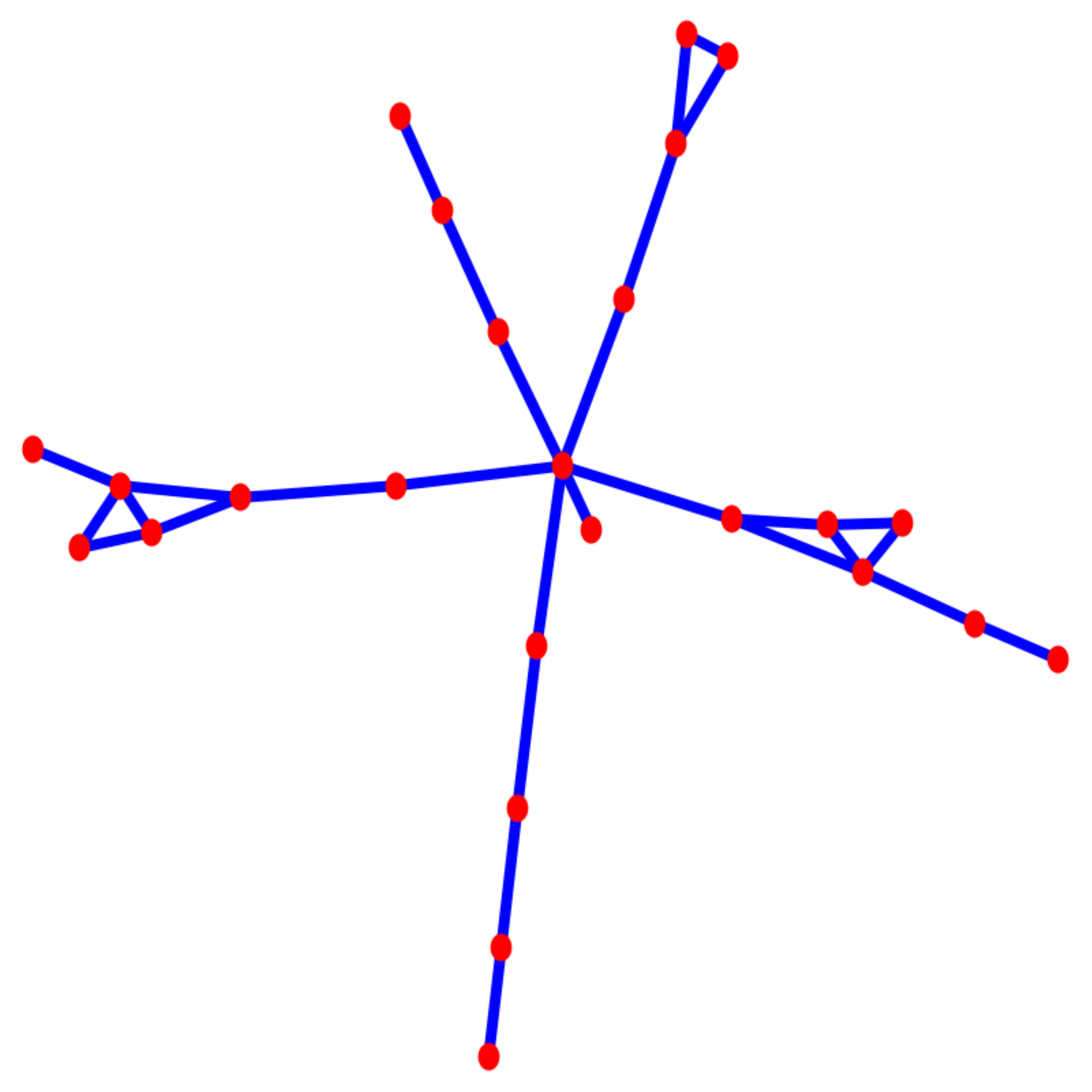}}
\scalebox{0.13}{\includegraphics{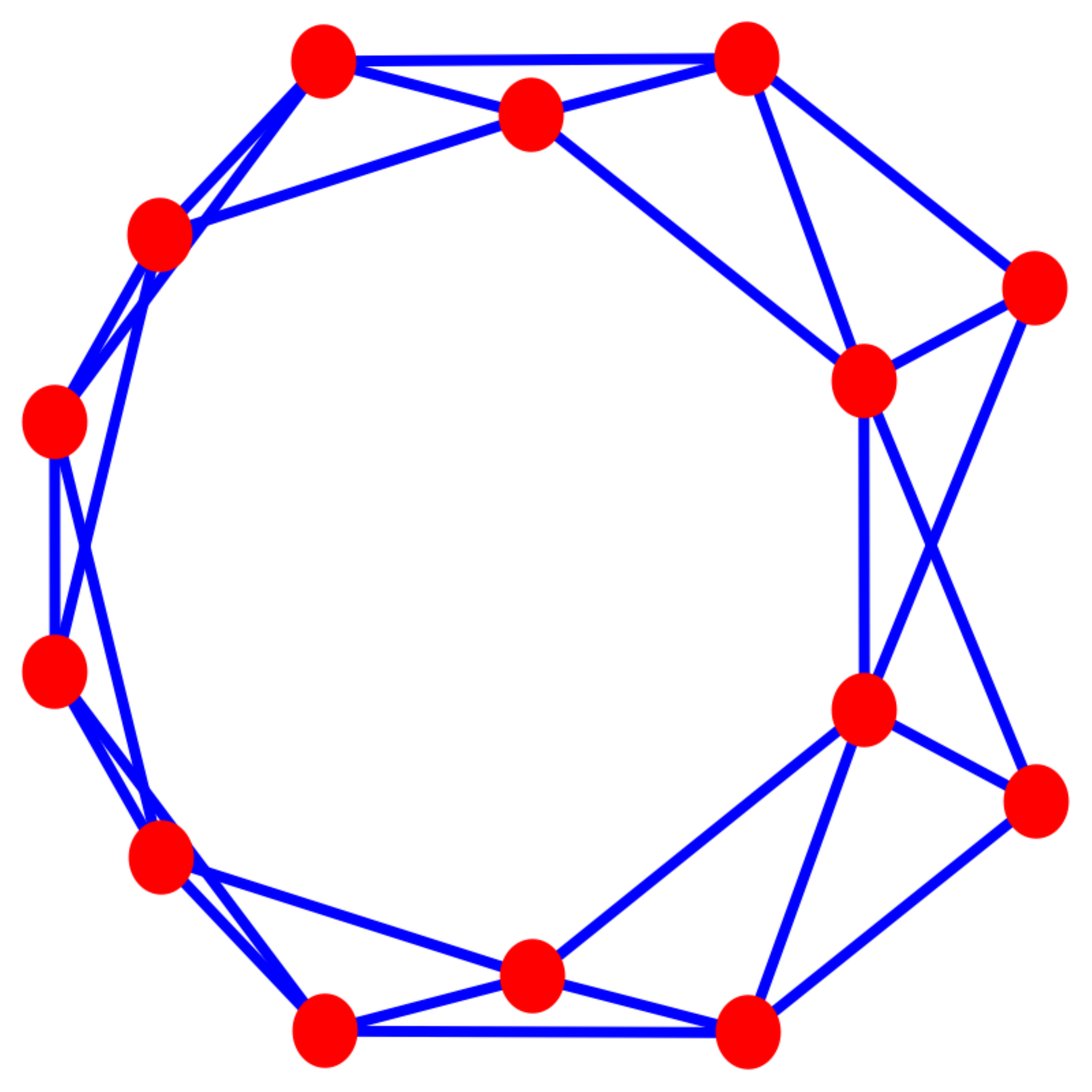}}
\scalebox{0.13}{\includegraphics{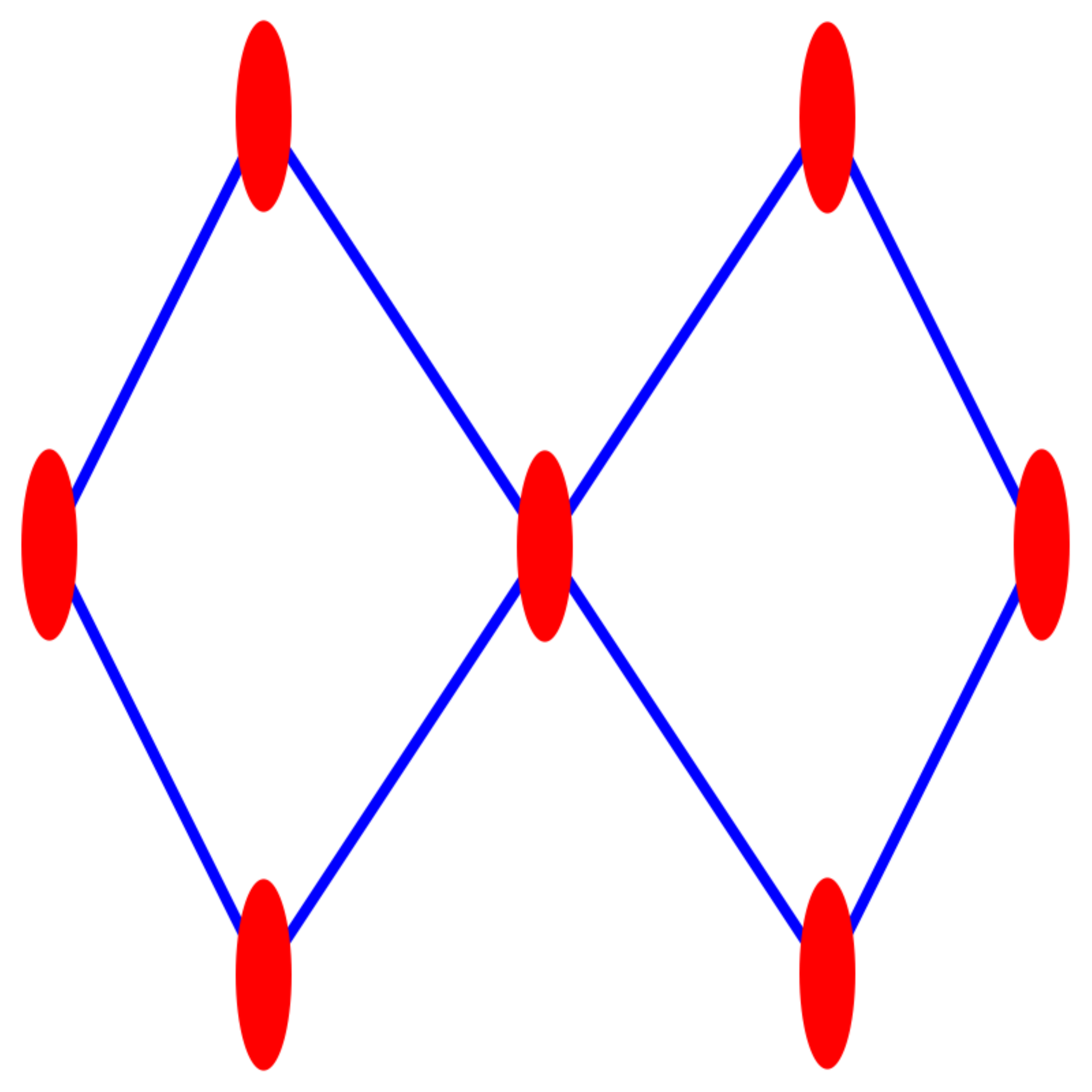}}
\caption{
The left figure shows an example of a cat-1 graph. 
The middle figure shows an example of a cat-2 graph of dimension
1. It is homotopic to a circular graph. The right figure 8 graph 
is a cat-2 graph for which $\crit(G)=3$ is larger. The figure $8$ graph $G$
is also an example of a graph for which the removal of any vertex does
not change the topological category. We have $\ccup(G)=2$ because $H^2(G)=0$ and 
$b_1>0$.  The figure 8 graph is an example where $\cat(G)<3=\crit(G)$.  
It has Betti vector $(1,2)$ and Euler characteristic $-1$. 
\label{smallcat}
}
\end{figure}

{\bf B)} Can one characterize finite simple connected graphs with fixed $\cat(G)=k$
or $\crit(G)=k$? By definition, cat-1 graphs are the contractible graphs which is a subclass of 
graphs homotopic to a one point.  This includes
star graphs, connected trees or complete graphs. 
The cat-2 graphs contain ``spheres'' which can be obtained by gluing together two contractible sets
along a sphere but it also contains examples where two spheres are glued along a point. And there
are examples like the dunce hat of $\tcat(G)=2$ graphs which are homotopic to a point 
satisfying therefore $\cat(G)=1$ but which is not contractible.
The Fox graph is an example of a tcat-2 graph which is not a sphere.
The statement that cat-2 manifolds have a free fundamental group \cite{DKR} might be true also for graphs.
It also might be true that every finitely presented group occurs in a graph of category $3$. 
\cite{DKR} ask a continuum version of the question whether $cat(M-p)=1$ for every closed manifold with $\cat(M)=2$. 
This is not true for graphs as the figure 8 graph shows. Removing any point still keeps category $2$. 
In general, it might be interesting to study homotopy groups and "weakly contractible graphs", graphs for which
all homotopy groups are trivial.  $\crit(G)=2$ graphs are spheres by a discrete Reeb theorem 
and have the Betti numbers $\vec{b} =(b_0,0, \dots, 0,b_n) = (1,0, \dots 0,1)$ and 
Euler characteristic $1+(-1)^n$. The smallest connected cat-2 graph is $C_4$. 
Characterizing the homotopy class of cat-3 or crit-3 graphs could already be not easy. Takens \cite{Takens}
classified three dimensional crit-3 manifolds: they are iterated connected sums $N_1^{\sharp k}$ of $N_1=S^1 \times S^2$ or 
of the form $N_1^{\sharp k} \sharp N_2$ where $N_2$ is the total space of a nonorientable $S^2$ bundle over $S^1$. The
connected sum $M=A \sharp B$ is obtained by taking a disjoint union of $A,B$ by removing the interior
or a closed 3-cell in the interior of $A$ and $B$ and identifying the boundaries by a diffeomorphism. Takens proof
consists of 4 statements, some of which can be discretized. But the connected sum in the discrete obtained by gluing
along a graph automorphism instead of a diffeomorphism could lead to more general graphs.  \\

{\bf C)} Finally, as the computation of category is acknowledged to be hard in the continuum \cite{CLOT},
one can ask about the complexity to compute $\cat(G)$ or $\crit(G)$ in graphs of order $n$. Is this problem in 
$NP$? Indeed we only know of clumsy ways to compute $\tcat(G)$: first deform $G$ to make it as small
as possible, then make a list of maximal contractible sets and see how many are needed to cover $G$. This
is a task which grows exponentially with the length=order+size of the graph. The lower and upper bounds
$\ccup$ and $\crit$ are not difficult to implement, but can be expensive to compute in 
general for a general finite simple graph. 

\section*{Appendix: Notions of homotopy}

Topologists often look at graphs as one dimensional simplicial complexes. Any connected graph $G$
is then homotopic to the wedge sum of $n=\chi(G)-1$ cycle graphs. There is obviously no relation at all
with the Whitehead type notions introduced by Ivashkenko to graph theory which is used here. \\

The following definition was done first in \cite{I93,I94} for graphs. As the definition of contractible, it 
can be done using induction with respect to the {\bf length} $n = |V| + |E|$ which is the sum of the order and
size of the graph. Assume the definition has been done for all graphs of length smaller than $n$,
we can use it to define contractible and homotopy for graphs of length $n$. 

\begin{defn}
An {\bf I-homotopy deformation step} is one of the four following steps: \\
{\bf a)} Deleting a vertex $v$ together with all connections if $S(x)$ is I-contractible. \\
{\bf b)} Adding a vertex with a pyramid construction over an I-contractible graph $H$. \\
{\bf c)} Adding an edge between two vertices $v,w$ for which $S(v) \cap S(w)$ is I-contractible. \\
{\bf d)} Removing an edge between two vertices $v,w$ for which $S(v) \cap S(w)$ is I-contractible. \\ \
\vspace{3mm}
Two graphs are called {\bf I-homotopic} if they can be transformed into each other by I-homotopy steps. 
A graph is {\bf I-contractible}, if it is I-homotopic to a graph with a single vertex. 
\end{defn}

As noticed in \cite{CYY}, the vertex deformation steps a) and allow to do the 
edge deformation steps c) and d):

\begin{propo}[Chen-Yau-Yeh]
Two graphs are I-homotopic if and only if they are homotopic:
they can be transformed into each other by using vertex deformation steps a) and b) alone.
\end{propo}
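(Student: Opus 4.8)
The plan is to prove the two implications separately, with essentially all of the content in one direction. The implication ``homotopic $\Rightarrow$ I-homotopic'' is immediate: steps a) and b) are literally two of the four moves allowed in an I-homotopy, so any sequence of vertex moves is in particular an I-homotopy. For the converse it suffices to show that each of the two edge moves c) and d) can be reproduced by a finite sequence of vertex moves a) and b). Since c) and d) are inverse to one another and homotopy is an equivalence relation (Proposition on homotopy), it is enough to treat one of them; I would treat edge addition. Thus the whole proposition reduces to the following lemma: \emph{if $v \not\sim w$ and $H := S(v) \cap S(w)$ is I-contractible, then $G$ and $G^+ := G \cup \{(v,w)\}$ are homotopic through vertex moves alone.}

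The engine of the argument is a \emph{mediator vertex}. First I would record a ``double cone'' fact: for any two vertices $v,w$ whose common neighbourhood $H = S(v) \cap S(w)$ is contractible, the subgraph induced on $\{v,w\} \cup H$ is contractible in itself, \emph{whether or not $v$ and $w$ are adjacent}. Indeed, ordering so that $w$ carries the largest value, its lower sphere inside this subgraph is either $H$ (when $v \not\sim w$) or the cone $\{v\} \cup H$ over $H$ (when $v \sim w$); both are contractible, so $w$ peels off, leaving the cone $\{v\} \cup H$, which collapses to a point. Consequently a new vertex $u$ with $S(u) = \{v,w\} \cup H$ may be attached by move b) in $G$ and, equally, in $G^+$, and removed again by move a). This gives $G \simeq G \cup \{u\}$ and $G^+ \simeq G^+ \cup \{u\}$, so the real task is isolated: toggle the edge $(v,w)$ in the presence of $u$, i.e. pass from $G \cup \{u\}$ to $G^+ \cup \{u\}$ by vertex moves. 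The role of $u$ is that, together with $H$, it furnishes a contractible scaffold adjacent to both $v$ and $w$ which can be enlarged at will, since every cone is contractible in itself and further apex vertices can always be stacked over the mediator.

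The main obstacle is structural, and it forces the use of ``up to isomorphism''. A vertex move only adds or deletes the edges incident to the single vertex being inserted or removed; it never alters the adjacency of two vertices that survive the whole deformation. Hence the adjacency of $v$ and $w$ cannot be changed while both are retained, and the only way to realise the toggle is to delete one endpoint, say $v$, and re-insert an isomorphic copy $v'$ joined to the same vertices as $v$ except $w$ (and to the mediator $u$). The difficulty is that move a) requires $S(v)$ to be contractible in itself at the moment of deletion, which is \emph{not} given: $v$ may carry arbitrary private neighbours outside $H$. The crux of the proof is therefore to first remodel the neighbourhood of $v$ into a contractible configuration by stacking cones supported on the scaffold $\{u\} \cup H$, so that $v$ becomes removable, and then to re-create it on the far side of the edge — checking at every step that the base of each addition and the sphere of each deletion is itself contractible. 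Propagating the single hypothesis ``$H$ contractible'' through this chain of intermediate spheres is the delicate part of the argument; it is exactly what is carried out in \cite{CYY}, and once the lemma is established it feeds into the reduction of the first paragraph to yield the proposition.
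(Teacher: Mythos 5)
Your reduction is the same as the paper's: the forward implication is trivial because a) and b) are themselves I-homotopy steps, and since c) and d) are mutually inverse it suffices to simulate edge addition by vertex moves alone. Your observation that a vertex move never alters the adjacency of two vertices that both survive the deformation --- so that one endpoint of the prospective edge must be deleted and re-created, and the statement can only hold up to isomorphism --- is correct and is sharper than anything the paper's own sketch makes explicit. The ``double cone'' fact that $\{v,w\}\cup H$ is contractible in itself whether or not $v\sim w$ is also correct and is the right tool for introducing a mediator vertex.

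The gap is at the step you yourself flag as the crux. You must (i) make $S(v)$ contractible in itself by attaching auxiliary vertices, (ii) delete $v$, (iii) re-insert a copy $v'$ with $S(v')=S(v)\cup\{w\}$ together with the scaffold, which must again be contractible at the moment of insertion, and (iv) dismantle the scaffold, which requires every auxiliary vertex to have a contractible sphere at the moment of its removal --- and those spheres now contain $v'$ with its altered adjacency. None of this is carried out; you defer it to \cite{CYY}, but the proposition being proved \emph{is} the Chen--Yau--Yeh statement, so citing it for the decisive step is circular. Note also that the paper's own argument runs as an induction on the length $n=|V|+|E|$: the pyramid apex placed over the cone $\{x\}\cup y$ is expanded into a copy of the strictly smaller contractible graph $y=S(x)\cap S(z)$, and the induction hypothesis is what licenses the intermediate expansions and collapses. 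Your proposal has no induction, and it is exactly the control of the intermediate spheres --- the part you leave open --- that the induction is needed for. So your architecture (mediator vertex, delete-and-reinsert one endpoint) is plausible and in some respects cleaner than the paper's sketch, but as written the proof of the key lemma is missing rather than merely different.
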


\begin{proof}
We only have to show that we can realize $c)$ using deformation steps a) and b).
The inverse d) can then be constructed too:
if $c = \prod_{j=0}^{n-1} c_j$ with $c_j \in \{ a,b \; \} $ then 
$d = c^{-1} \prod_{j=1}^n c_{n-j}^{-1}$ can  also be realized using 
deformation steps a,b. \\
The proof is inductive with respect to the length $n$ of the graph. Assume we have shown already that 
I-homotopy and homotopy are equivalent for graphs of length smaller than $n$, we
show it for graphs $G$ of length $n$.  \\
Let $x,z$ be vertices in $G$ attached to a contractible graph $y = S(x) \cap S(y)$.
In order to add the edge $xz$, we first make a pyramid extension $v$ over $x \cup y$.
Now use steps a) and b) to expand the vertex $v$ to become a copy of $y$. 
This is possible by induction because $y$ has length smaller than $n$. 
Now remove vertices from the $y$ until it is a point. Also this is is possible because
the old $y$ has length smaller than $n$. Finally we remove the old $y$. 
Now $x$ and $z$ are connected.  
\end{proof} 

Other homotopy notions have been considered for graphs. A homotopy notion due to Atkin
is more algebraic and is studied in \cite{Babson}.
A weaker equivalence relation called $s$-homotopy is introduced by \cite{BouletFieuxJouve}.
They ask the question whether it is equivalent to I-homotopy. 

%\vfill
%\pagebreak

\vspace{12pt}
\bibliographystyle{plain}
%\bibliography{./lusternik}

\begin{thebibliography}{10}

\bibitem{AaronsonScoville}
S.~Aaronson and N.~Scoville.
\newblock Lusternik {Schnirelmann} category for cell complexes.
\newblock 8/19/2012.

\bibitem{Alexander}
J.W. Alexander.
\newblock The combinatorial theory of complexes.
\newblock {\em Annals of Mathematics}, 31:292--320, 1930.

\bibitem{Babson}
E.~Babson, H.~Bercelo, M.~de~Longueville, and R.~Laubenbacher.
\newblock Homotopy theory of graphs.
\newblock {\em J. Algebraic Combinatorics}, 24:31--44, 2006.

\bibitem{Bott82}
R.~Bott.
\newblock Lectures on {Morse} theory, old and new.
\newblock {\em Bulletin (New Series) of the AMS}, 7:331--358, 1982.

\bibitem{BouletFieuxJouve}
R.~Boulet, E.~Fieux, and B.~Jouve.
\newblock Simplicial simple-homotopy of flag complexes in terms of graphs.
\newblock {\em European Journal of Combinatorics}, 31:161--176, 2010.

\bibitem{BrachoMonteiano}
J.~Bracho and L.~Montejano.
\newblock The scorpions: examples in stable non collapsiblility and in
  geometric category theory.
\newblock {\em Topology}, 30:541--550, 1991.

\bibitem{CYY}
B.~Chen, S-T. Yau, and Y-N. Yeh.
\newblock Graph homotopy and {G}raham homotopy.
\newblock {\em Discrete Math.}, 241(1-3):153--170, 2001.
\newblock Selected papers in honor of Helge Tverberg.

\bibitem{ClappMontejano}
M.~Clapp and L.~Montejano.
\newblock Lusternik-schnirelmann category and minimal coverings with
  contractible sets.
\newblock {\em Manuscripta Mathematica}, 58:37--45, 1987.

\bibitem{Clapp}
M.~Clapp and D.~Puppe.
\newblock Invariants of the {Lusternik-Schnirelmann} type and the topology of
  critical sets.
\newblock {\em Transactions of the AMS}, 298(2):603--620, 1986.

\bibitem{Cohen}
M.~M. Cohen.
\newblock {\em A Course in Simple-Homotopy Theory}.
\newblock Springer, 1973.

\bibitem{CLOT}
O.~Cornea, G.~Lupton, J.~Oprea, and D.~{Tanr\'e}.
\newblock {\em Lusternik-Schnirelmann Category}.
\newblock AMS, 2003.

\bibitem{DKT}
M.~Desbrun, E.~Kanso, and Y.~Tong.
\newblock Discrete differential forms for computational modeling.
\newblock In A.~Bobenko, P.~Schroeder, J.~Sullivan, and G.~Ziegler, editors,
  {\em Discrete Differential Geometry}, Oberwohlfach Seminars, 2008.

\bibitem{DKR}
A.~Dranishnikov, M.~Katz, and Y.~Ruyak.
\newblock Small values of the lusternik-schnirelmann category for manifolds.
\newblock {\em Geometry and Topology}, 12:1711--1727, 2008.

\bibitem{Forman1999}
R.~Forman.
\newblock Combinatorial differential topology and geometry.
\newblock {\em New Perspectives in Geometric Combinatorics}, 38, 1999.

\bibitem{Forman2002}
R.~Forman.
\newblock A user's guide to discrete {Morse} theory.
\newblock {\em Seminaire Lotharingien De Combinatoire}, 48, 2002.

\bibitem{Fox}
R.~Fox.
\newblock On the {L}usternik-{S}chnirelmann category.
\newblock {\em Ann. of Math. (2)}, 42:333--370, 1941.

\bibitem{Ganea}
T.~Ganea.
\newblock Lusternik-schnirelmann category and strong category.
\newblock {\em Illinois J. Math}, 11:417--427, 1967.

\bibitem{Giblin}
P.~Giblin.
\newblock {\em Graphs, Surfaces and Homology}.
\newblock Cambridge University Press, 1977,1981,2010.

\bibitem{Grady}
L.J. Grady and J.R. Polimeni.
\newblock {\em Discrete Calculus}.
\newblock Springer, 2010.

\bibitem{I93}
A.~Ivashchenko.
\newblock Representation of smooth surfaces by graphs. transformations of
  graphs which do not change the euler characteristic of graphs.
\newblock {\em Discrete Math.}, 122:219--233, 1993.

\bibitem{I94}
A.~Ivashchenko.
\newblock Contractible transformations do not change the homology groups of
  graphs.
\newblock {\em Discrete Math.}, 126(1-3):159--170, 1994.

\bibitem{brouwergraph}
O.~Knill.
\newblock A {Brouwer} fixed point theorem for graph endomorphisms.
\newblock {\\}http://arxiv.org/abs/1206.0782, 2012.

\bibitem{poincarehopf}
O.~Knill.
\newblock A graph theoretical {Poincar\'e-Hopf} theorem.
\newblock {\\}http://arxiv.org/abs/1201.1162, 2012.

\bibitem{indexexpectation}
O.~Knill.
\newblock On index expectation and curvature for networks.
\newblock http://arxiv.org/abs/1202.4514, 2012.

\bibitem{LS}
L.~Lusternik and L.~Schnirelmann.
\newblock {\em Methodes Topologiques dans les Problems Variationnels}.
\newblock Herman, Paris, 1934.

\bibitem{Mil63}
J.~Milnor.
\newblock {\em Morse theory}, volume~51 of {\em Annals of Mathematics Studies}.
\newblock Princeton University press, Princeton, New Jersey, 1963.

\bibitem{Morse29}
M.~Morse.
\newblock Singular points of vector fields under general boundary conditions.
\newblock {\em American Journal of Mathematics}, 51, 1929.

\bibitem{RudyakSchlenk}
Y.~Rudyak and F.~Schlenk.
\newblock Lusternik-{S}chnirelmann theory for fixed points of maps.
\newblock {\em Topol. Methods Nonlinear Anal.}, 21(1):171--194, 2003.

\bibitem{Singhof}
W.~Singhof.
\newblock On the {L}usternik-{S}chnirelmann category of {L}ie groups.
\newblock {\em Math. Z.}, 145(2):111--116, 1975.

\bibitem{Takens}
F.~Takens.
\newblock The minimal number of critical points of a function on a compact
  manifold and the {L}usternik-{S}chnirelman category.
\newblock {\em Invent. Math.}, 6:197--244, 1968.

\bibitem{Whitehead}
J.H.C. Whitehead.
\newblock Simplicial {S}paces, {N}uclei and m-{G}roups.
\newblock {\em Proc. London Math. Soc.}, 45(1):243--327, 1939.

\bibitem{Zeeman64}
E.~Zeeman.
\newblock On the dunce hat.
\newblock {\em Topology}, pages 341--358, 1964.

\end{thebibliography}

\end{document}